\documentclass[10pt,reqno]{amsart}

\usepackage[margin=1in]{geometry}        % or margin=0.9in for tighter copy
\usepackage{microtype}                   % better kerning & spacing

\usepackage{amsmath,amsthm,amssymb,amsfonts}
\usepackage{mathtools}

\usepackage[colorlinks=true, linkcolor=blue, citecolor=blue, urlcolor=blue]{hyperref}

\newtheorem{theorem}{Theorem}[section]
\newtheorem{lemma}[theorem]{Lemma}
\newtheorem{proposition}[theorem]{Proposition}

\theoremstyle{definition}
\newtheorem{definition}[theorem]{Definition}

\theoremstyle{remark}
\newtheorem{remark}[theorem]{Remark}

\newcommand{\R}{\mathbb{R}}
\newcommand{\D}{\mathfrak{D}}
\newcommand{\M}{\mathfrak{M}}
\newcommand{\loc}{\mathrm{loc}}
\newcommand{\Lz}{\operatorname{Sgn}}

\newcommand{\EH}{\mathrm{EH}}

\newcommand{\K}{\mathcal{K}}
\newcommand{\B}{\mathbb{B}}
\newcommand{\U}{\mathbb{U}}
\newcommand{\W}{\mathbb{W}}
\newcommand{\Q}{\mathrm{Q}}
\newcommand{\supp}{\operatorname{supp}}

\begin{document}

\title{A topology-changing variational framework for the Einstein-Hilbert functional}% Force line breaks with \\
%\thanks{A footnote to the article title}%

\author{M. Paschalis}
\address{Department of Mathematics, National and Kapodistrian University of Athens, Greece}
\email{mpaschal@math.uoa.gr}

\date{\today}

\begin{abstract}
Motivated by recent developments in the theory of gravitation, we revisit the idea of topological variations, originally introduced by Wheeler and Hawking, from a rigorous perspective. Starting from a localized version of the Einstein-Hilbert variational principle, we encode the key aspects of the variational procedure in the form of a topology on a suitable space of Sobolev variational configurations, which is the final topology generated by the admissible variational maps. This framework naturally lends itself to generalization, and we rigorously introduce two distinct types of topological variations, corresponding to the infinitesimal addition of disconnected components and to infinitesimal surgeries, both motivated by related physical concepts. Using tools from the theory of Sobolev spaces and precise asymptotics, we establish dimensional obstructions for the continuity and differentiability of the Einstein-Hilbert action with respect to these variations, and show that in the extended variational framework the action does not admit critical points in dimension $n=4$, while higher dimensions are free of this problem. We also discuss the deeper geometric issue of scalar curvature blow-up of degenerating metrics within the context of our framework, and finally demonstrate the non-trivial effect of added higher order curvature terms on the critical dimension.
\end{abstract}

\keywords{Topology change, topological functional derivative, Einstein-Hilbert, infinitesimal surgery, curvature blow-up}

\subjclass[2020]{49S05, 58E30, 57R65, 49Q12, 83C99}                           
                              
\maketitle

\tableofcontents

\section{Introduction}

Variational principles have long constituted a universal framework for formulating the dynamics of physical systems. The calculus of variations has emerged as a separate, technically and conceptually rich field of mathematics, intimately related with the theory of partial differential equations and geometric analysis \cite{gelfand1963,landau1971,christodoulou2000}. In quantum field theory (QFT), variational formulations of physical theories are also of central importance to path integral approaches to quantization, as the action constitutes the primary input for the partition function; see \cite{simon1974,folland2008,dutsch2019} for rigorous treatments of path integrals. In the theory of general relativity (GR), the physical and geometric aspects of this framework are particularly intertwined. Einstein's insight was that given a background differentiable manifold $M$, the gravitational field is identified with a Lorentzian metric $g$ on $M$ obeying his celebrated field equations
\begin{equation}
R_{\mu\nu}-\frac{1}{2}Rg_{\mu\nu} + \Lambda g_{\mu\nu} = T_{\mu\nu},
\end{equation}
where the left hand side represents the Einstein tensor with an added cosmological constant term, and the right hand side represents the energy-momentum tensor which is determined by matter fields. Einstein \cite{einstein1915}, and almost simultaneously Hilbert \cite{hilbert1915}, demonstrated how the field equations can be derived from a variational principle, which is known today as the Einstein-Hilbert action and amounts to the integral of scalar curvature, a manifestly geometric quantity. Several equivalent or nearly equivalent variational principles for GR have been proposed through the years, which mostly amount to the addition of boundary terms or first order reformulations, aiming to either resolve problems regarding mathematical and physical aspects of the theory, or recast the theory in a form that can be generalized in different manners \cite{palatini1919,cartan1922,holst1996,york1972,gibbons1977,castrillon2014,parattu2015,lehner2016}.

The idea that background geometry is itself a dynamical variable has far-reaching implications and possible extensions. Wheeler \cite{wheeler1957} was the first to propose that since spacetime is identified with the Lorentzian manifold $(M,g)$, the whole structure is to be regarded as dynamic, not only the metric tensor $g$, meaning that the background topology and differentiable structure encoded in the differentiable manifold $M$ are also dynamical features of the theory. Taking into account the manifest scale-dependence of the metric tensor, he moreover conjectured that spacetime at the quantum level looks far less regular than it does at the macroscopic scale, and exhibits wild fluctuations in curvature and topology, an effect which Hawking \cite{hawking1978} later termed as \textit{spacetime foam}. Using a heuristic argument based on simplicial decompositions, Hawking suggested that Einstein-Hilbert action is continuous under changes of topology, in the sense that a change of topology that is suitably small in size changes the action by an arbitrarily small amount, in support to Wheeler's hypothesis. Hawking then proceeded to define a path integral in which the contributions of not only different geometries, but also topologies are summed. Nevertheless, in a later work \cite{hawking1982}, he argued that the loss of global hyperbolicity implicit in the process has as consequence the loss of quantum coherence, which sparked a debate on the viability of the theory by several authors including Coleman, Giddings and Strominger \cite{coleman1988,giddings1988}. It should be noted that even without topological variations, attempts to quantize the metric tensor have proven to be highly problematic due to catastrophic divergences \cite{thooft1974,goroff1986} or even cosmological loss of unitarity \cite{witten2001} from the QFT point of view. 

Regarding topological variations, Hawking's interpretation features an additional structural gap, namely the absence of a properly defined topological functional derivative. Without it, a variational interpretation of the theory is ill-posed, even in classical terms, because one cannot talk about critical points without a derivative. Moreover, both perturbative and non-perturbative approaches to the path integral are only meaningful if the stationary phase principle applies, which in turn requires a well-defined action admitting critical points, and thus well-defined functional derivatives. Without stationary phase, the probability measure does not concentrate about classical solutions as $\hbar \rightarrow 0$ and the quantum theory doesn't have a sensible classical limit. The difficulty in defining a topological functional derivative stems from the fact that there is no obvious infinitesimal version of a topological variation, in the sense that the metric tensor can be varied as $g\rightarrow g+\epsilon h$ and $\epsilon$ is taken to be arbitrarily small. To our knowledge, the first attempt to define a topological functional derivative fitted to the context of general relativity can be attributed to the recent work of Tsilioukas et al. \cite{tsilioukas2024}, in which the authors employ a heuristic semiclassical approach to investigate the effects of changing topology in Einstein-Gauss-Bonnet gravity. This work, however, rests on heuristic assumptions which - while physically motivated - cannot be justified from a rigorous point of view, and the formal introduction of a topological functional derivative remains an unresolved gap in the literature.

The long-standing gap together with the possibility of high-impact physical implications both warrant a more careful, rigorous analysis of topological variations. In this paper, we introduce a foundational approach aiming to establish a topological calculus of variations. In Section \ref{GR without topological variations}, we give a rigorous reformulation of the Einstein-Hilbert action as a localized variational principle, motivated by the local nature of Einstein's equations. We work with an appropriate space of admissible Sobolev metrics which offers a rich mathematical framework that allows to establish rigorous continuity results. The admissible variations are moreover encoded in the topology with which one equips the space of variational configurations, which in particular is the final topology with respect to the associated variational maps. In the case of purely geometric variations, this topology looks like a disjoint union of variational configurations over non-diffeomorphic structures, thus restricting the possibility of transitions from one background topology to another. 

In Section \ref{GR with topological variations} we argue that this topology can be refined to accommodate certain topological variations, while in Sections \ref{Disconnected topological variations} and \ref{Connected topological variations} we respectively introduce two types of infinitesimal topological variations, introducing new topology either via disconnected components or via surgery. We establish continuity of Einstein-Hilbert action with respect to the final topology of the associated deformation maps, which we further use to define topological functional derivatives. We compute these derivatives for the Einstein-Hilbert action, and establish the existence of a critical dimension which must be exceeded to ensure that the action admits critical points within the established framework, a feature absent from the regime of purely geometric variations. Interestingly, the critical dimension is equal to four, which implies that Einstein-Hilbert action has no critical points in that dimension when such topological variations are involved, which in turn brings forward issues with its well-posedness both in the classical and quantum setting.

In Section \ref{Blow-up}, we show that degenerating metrics can have diverging values for the Einstein-Hilbert functional while remaining uniformly bounded in their derivatives up to any order. Scalar curvature blow-up of degenerating metrics is a known phenomenon in geometric analysis (see, e.g. Nguyen \cite{nguyen2022}), but it appears that the specific realization we consider here is not explicitly discussed elsewhere. Moreover, we also discuss obstructions introduced by this phenomenon on the possibility of extending our proposed variational framework. Finally, in Section \ref{Quadratic terms}, we briefly showcase the non-trivial effect of critical dimension shift in the presence of higher order curvature terms.

Regarding the physical implications of our analysis, the conclusion is that topology-changing variational frameworks may in general impose nontrivial dimensional constraints on gravitational actions or even obstruct their continuity, and so caution is warranted when manipulating path integrals involving sums over multiple topologies as neither well-posedness of the path integrals nor proper classical limits are guaranteed. This means that, as a minimum, the sense in which topology change can occur has to be made precise and effects on the underlying action and the existence of critical points need to be investigated before making physical claims.

\section{The localized Einstein-Hilbert action} \label{GR without topological variations}

We begin our treatment with a rigorous mathematical formulation of classical GR without topological variations in terms of a local variational principle. While several variational principles have been proposed for GR, many differ from Einstein-Hilbert only by added boundary terms or use of the alternative tetrad framework, so without significant loss of generality we will focus here on Einstein-Hilbert. To avoid complicating the discussion, we will only consider the vacuum theory without matter fields and with no cosmological constant; these features can still be added later without problems. The starting point is the vacuum Einstein equations
\begin{equation}
R_{\mu\nu}-\frac{1}{2}Rg_{\mu\nu}=0, \label{Einstein equations}
\end{equation}
where $g_{\mu\nu}$ is the spacetime metric, $R_{\mu\nu}= {R^\rho}_{\mu\rho\nu}$ is the Ricci tensor obtained by contracting the curvature tensor of the Levi-Civita connection, and $R=g^{\mu\nu}R_{\mu\nu}$ is the scalar curvature, where $g^{\mu\nu}$ is the inverse of $g_{\mu\nu}$. In a classical context, these equations are assumed to hold on a background differentiable manifold $M$ of given, fixed topology and differentiable structure. It is also assumed that $\dim M = 4$ and that $g$ is of Lorentzian sinature $(-+++)$. In our discussion we generalize the setup by fixing the dimension $\dim M =n$ and signature $(r,s)$, where $r+s = n$; note that our conclusions will be independent of signature.

The Einstein equations \eqref{Einstein equations} are equivalently expressed as the vanishing of the variation of the localized Einstein-Hilbert action with respect to variations of the metric that have compact support. Given a precompact open set $\Omega\subset M$, let
\begin{equation}
S_\EH[M,g;\Omega]= \int_\Omega R[g] |\det g_{\mu\nu}|^{1/2}\,dx \label{Einstein-Hilbert}
\end{equation}
where $g$ is a metric of signature $(r,s)$ and $g_{\mu\nu}$ is its matrix representation in local coordinates $x^\mu$, $\mu=1,\ldots,n$. The Lagrangian density is an invariant quantity, independent of the coordinates in which it is expressed. Note that the action here is not a single functional but rather a family of functionals $\Omega \mapsto S_\EH[M,-;\Omega]$ where $\Omega$ ranges over the family of precompact regions in $M$. We avoid formulating the action in terms of a single global integral over $M$, as this would impose the global integrability of the Lagrangian density for the action to be well-defined. Since the equations \eqref{Einstein equations} are local in nature, this would be an \textit{ad hoc} assumption already yielding a non-equivalent interpretation of the theory by unjustifiably forcing this restriction. The approach presented here is in line with the general framework of local variational principles suggested by Christodoulou \cite{christodoulou2000}; also many others. In this approach, variations are performed with support inside an arbitrary precompact open subset of spacetime, rather than globally. This allows the space of admissible configurations to consist of fields that are merely locally integrable, avoiding unwanted restrictions.

The first task is to specify an appropriate domain of definition of the action, i.e. the space of admissible metrics $g$. In a formal functional analytic setup, it is often appropriate to work with Sobolev spaces. Their local version on the manifold $M$ can be defined in terms of an auxiliary smooth Riemannian metric $\hat{g}$ on $M$, in particular we declare a tensor field $T$ to belong to $W^{k,p}_\loc (M)$, $k\in\{0,1,\ldots\}$, $1\leq p < \infty$, if 
\begin{equation}
\| T \|_{W^{k,p}(K,\hat{g})} := \bigg\{ \sum_{j=1}^k \int_K |\hat{\nabla}^j T |_{\hat{g}}^p\,d\hat{v} \bigg\}^{1/p}
\end{equation}
is finite for every compact subset $K$ of $M$, where $|\cdot|_{\hat{g}}$, $\hat{\nabla}$ and $d\hat{v}$ denote the norm, weak covariant derivative and volume element defined by $\hat{g}$ respectively. The case $p=\infty$ is defined similarly with the $L^p$ norms replaced by the essential supremum; for Sobolev spaces in Riemannian manifolds, see Hebey \cite{hebey1996}. The local Sobolev norms $\| \cdot \|_{W^{k,p}(K,\hat{g})}$ do depend on the choice of $\hat{g}$, but they are equivalent for any compact set $K$ and so generate the same topology on $W^{k,p}_\loc (M)$ for tensors of any valence.

The expanded local expression of scalar curvature in terms of the inverse matrix $g^{\mu\nu}$ and the partial derivatives of $g_{\mu\nu}$ is
\begin{equation}\label{scalar curvature}
\begin{aligned}
R[g] &= \frac{1}{2} g^{\mu\nu} g^{\rho\sigma}
\big(
\partial_\rho \partial_\nu g_{\mu\sigma}
+ \partial_\sigma \partial_\mu g_{\nu\rho}
- \partial_\rho \partial_\sigma g_{\mu\nu}
- \partial_\mu \partial_\nu g_{\rho\sigma}
\big) \\
	&\hspace{1cm}+ \frac{1}{4} g^{\mu\nu} g^{\rho\sigma} g^{\alpha\beta}
\big(
(\partial_\rho g_{\nu\alpha})(\partial_\sigma g_{\mu\beta})
- 2 (\partial_\rho g_{\mu\alpha})(\partial_\sigma g_{\nu\beta})
\big),
\end{aligned}
\end{equation}
which is quasilinear in $g_{\mu\nu}$. From the form of \eqref{scalar curvature} we see that $g$ must be at least twice weakly differentiable, and also such that the Lagrangian density
\begin{equation}
\mathcal{L}[g] = R[g] |\det g_{\mu\nu}|^{1/2}\,dx
\end{equation}
is locally integrable for the action to be well defined. Note that due to the negative powers of $g_{\mu\nu}$ in \eqref{scalar curvature}, these requirements do not unambiguously place $g$ in a definite second order Sobolev space $W^{2,p}_\loc(M)$, as the later in general does not control the behavior of the inverse $g^{\mu\nu}$, which may be unbounded; the Lagrangian density thus may or may not be locally integrable for small values of $p$. If $p>n/2$, however, we may invoke the Morrey embedding $W^{2,p}_\loc(M) \hookrightarrow C^{0,\alpha}_\loc(M)$ (see, e.g., \cite{evans2010,adams2003,gilbarg2001}), i.e. those metrics are automatically $\alpha$-H\"older continuous. Since $g_{\mu\nu}$ as a matrix is invertible and continuous, it follows that $g_{\mu\nu}$, $g^{\mu\nu}$ and $|\det g_{\mu\nu}\,|$ are bounded above and bellow by positive constants inside any given compact set. Being an element of $W^{2,p}_{\loc}(M)$ automatically ensures local integrability of the $\partial_\sigma \partial_\rho g_{\mu\nu}$ in view of H\"older inequality, and for $p\geq 2n/(n+2)$ the Sobolev embedding $W^{2,p}_\loc(M) \hookrightarrow W^{1,2}_\loc(M)$ ensures that the $\partial_\rho g_{\mu\nu} \in L^2_\loc(M)$. Note that for $n\geq2$ we automatically have $n/2\geq 2n/(n+2)$, so we assume from now on $p>n/2$.

Let $\Lz M$ denote the bundle of all pointwise inner products of signature $(r,s)$ on $M$, i.e. all (0,2)-tensors $g_p:T_pM \times T_pM \rightarrow \R$ that can be brought to diagonal form $\mathrm{diag}(-1,\ldots,-1,1,\ldots,1)$ in some basis, where $-1$ appears $r$ times and $1$ appears $s$ times. The usual smooth metrics are then members of the space of smooth sections $C^\infty(\Lz M)$. In view of the preceding discussion, we define the domain of admissible metrics for the local Einstein-Hilbert action with background manifold $M$ to instead be
\begin{equation}
D_\EH(M) := W^{2,p}_\loc(\Lz M).
\end{equation}
When $M$ is compact, this space is an infinite-dimensional Banach manifold whose tangent space consists of all symmetric (0,2)-tensor fields of the same regularity; for the smooth case see Ebin \cite{ebin1970}. This space is independent of the domain $\Omega$ where the integration is carried out, and also doesn't feature any \textit{ad hoc} global restrictions, as promised. Note that, depending on signature, $D_\EH(M)$ may be empty for some $M$; for example, it is well known that while all differentiable manifolds can be equipped with a Riemannian metric, some do not admit a Lorentzian metric. Note that our analysis could be further simplified by instead considering the space $C^2(\Lz M)$ of twice continuously differentiable metrics, which also ensures local integrability of the Lagrangian density. However, this comes at the cost of losing the generality of weak derivatives, which we wish to retain for the sake of mathematical depth.

\begin{remark}
In several resources covering the variation of the Einstein-Hilbert action, metrics and their variations are assumed to be smooth. Working with smooth metrics is sufficient to almost recover the complete picture, since smooth functions are dense in Sobolev spaces. In the case of Riemannian signature, vacuum Einstein equations would be elliptic and solutions would automatically be smooth in view of elliptic regularity. Note, however, that if the signature is Lorentzian, solutions to the hyperbolic Einstein evolution equations are only as smooth as their initial data, which means there are perfectly eligible non-smooth solutions of the vacuum equations, so the a priori imposition of smoothness is another \textit{ad hoc} restriction; for standard references see \cite{evans2010,adams2003,gilbarg2001}.
\end{remark} 

So the action is, so far, a map $S_\EH: \{M \} \times D_\EH(M) \times \mathcal{K}(M) \rightarrow \R$ where $M$ is the given background manifold, $D_\EH(M)$ is the space of sufficiently regular metrics defined above, and $\K(M)$ is the set of all precompact open subsets of $M$. The Einstein equations \eqref{Einstein equations} are recovered by the usual variational procedure. Note, however, that the admissible variations do depend on the domain $\Omega$, as they need to have compact support in $\Omega$. While $D_\EH(M)$ determines the admissible metrics, the admissible variations are determined only once a domain $\Omega \in \K(M)$ is specified, and this information can be encoded in the topology with which the product $\{M \} \times D_\EH(M)\times \K(M)$ is equipped. It is appropriate to define this topology so that the desired variations are continuous by construction. For this we recall the notion of the final topology.

\begin{definition}
Let $\{(X_i,\tau_i)\}_{i\in I}$ be a collection of topological spaces, $Y$ a set, and $\mathcal{F} = \{ f_i : X_i \rightarrow Y \}_{i\in I}$ a family of maps. The \textit{final (or inductive limit) topology} $\tau(\mathcal{F})$ of $Y$ with respect to the family of maps $\mathcal{F}$ is defined by 
\begin{equation}
U\in \tau(\mathcal{F}) \text{ \textit{if and only if} for all } i\in I, \ f_i^{-1}(U)\in \tau_i.
\end{equation}
Note that $\tau(\mathcal{F})$ is the finest topology on $Y$ that makes all maps in $\mathcal{F}$ continuous.
\end{definition}

We denote by $W^{2,p}_c(\operatorname{Sym}\Omega)$ the space of all compactly supported symmetric $(0,2)$-tensor fields in $\Omega$, equipped with the subspace topology of $W^{2,p}_\loc(\operatorname{Sym}M)$. Since $\Omega$ is assumed to be precompact, this topology actually coincides with the subspace topology of the Banach space $W^{2,p}(\operatorname{Sym}\Omega)$. Note that $W^{2,p}_c(\operatorname{Sym}\Omega)$ is not a closed subspace of $W^{2,p}(\operatorname{Sym}\Omega)$, but this fact is of no consequence to our analysis.

\begin{remark}
There are several topology candidates for $W^{2,p}_c(\operatorname{Sym}\Omega)$, each having its appropriate use, so let us briefly justify the above choice as the most appropriate for our framework. First, it respects our choice of $W^{2,p}_\loc(\Lz M)$ as the appropriate space of admissible metrics, and indeed every construction that follows factors through $W^{2,p}_\loc(\Lz M)$ even when compactly supported variations are assumed; this is made apparent in Theorem \ref{Geometric continuity thm} and also arguments in subsequent sections that rely on it. It should also be noted that the compact support assumption is forced upon us for no reason other than avoiding discussing boundary terms, which has nothing to do with the intended regularity. Another possible option would be to consider the subspace topology in $W^{2,p}_\loc(\operatorname{Sym}\Omega)$, but this has the unwanted feature of uncontrolled boundary behavior and is ruled out for this reason.

The most common choice for a topology for $W^{2,p}_c(\operatorname{Sym}\Omega)$ in a distributional setting is arguably the final topology generated by the natural inclusion maps
\begin{equation}
\iota_K: W^{2,p}_K(\operatorname{Sym}\Omega) \hookrightarrow W^{2,p}_c(\operatorname{Sym}\Omega),
\end{equation}
where the $K$ range over all compact subsets of $\Omega$ and
\begin{equation}
W^{2,p}_K(\operatorname{Sym}\Omega):=\{ h\in W^{2,p}_c(\operatorname{Sym}\Omega): \supp(h) \subset K \}. 
\end{equation}
This is finer than the topology we identify as appropriate and finer than what is actually needed for our results. Moreover, since the Einstein-Hilbert Lagrangian density is a local differential operator and does not warrant a genuinely distributional approach; precise control of the supports is irrelevant here.
\end{remark}

The topology on $\{ M \} \times D_\EH(M) \times \K(M)$ is then defined to be the final topology with respect to variations of compact support.

\begin{definition}\label{inductive limit geometric variations}
\noindent
\begin{enumerate}
\item Let $g\in D_\EH(M)$, $\Omega \in \K(M)$, and let $\mathcal{U}(M,g;\Omega)$ be an open neighborhood of the zero section in $W^{2,p}_c(\operatorname{Sym}\Omega)$ such that the variational map $\Phi_{(M,g;\Omega)}: \mathcal{U}(M,g;\Omega) \rightarrow \{ M \} \times D_\EH(M) \times \K(M)$,
\begin{equation}
\Phi_{(M,g;\Omega)}(h):= (M,g+h;\Omega),
\end{equation}
is well defined, i.e. $g+h$ remains a metric of signature $(r,s)$ for all $h\in \mathcal{U}(M,g;\Omega)$.
\item We denote by $\tau_\EH(M)$ the final topology on $\{ M \} \times D_\EH(M) \times \K(M)$ with respect to the family of variational maps.
\end{enumerate}
\end{definition}

\noindent
Note that due to the Morrey embedding an open set $\mathcal{U}(M,g;\Omega)$ as in the definition always exists. This topology can be described explicitly by writing down a basis of open neighbourhoods. These are easily seen to be the open sets
\begin{align}\label{Open balls}
\begin{split}
B_\epsilon(M,g;\Omega,\hat{g}) := \{ (M,\tilde{g};\Omega): \tilde{g} \in D_\EH(M): \supp(\tilde{g}-g)  \subset \Omega,\ \| \tilde{g}-g \|_{W^{2,p}(\Omega,\hat{g})} <\epsilon \}.
\end{split}
\end{align}
This means that two metrics in $D_\EH(M)$ are close only if they agree outside a compact subset of the given domain of variation; in that case the closeness is measured in terms of the Sobolev norm, which is exactly what happens in the variational procedure. The crucial insight is that while $D_\EH(M)$ gives a description of admissible configurations, the topological space $\{ M\}\times D_\EH(M)\times\K(M)$ gives a prescription of admissible variations. Both aspects are important for the precise definition of the variational principle.

In this setup, the usual functional derivative may be introduced and gives rise to the Einstein tensor. Given $\Omega\in \K(M)$ and $g\in D_\EH(M)$, we consider a symmetric (0,2)-tensor field $h$ with compact support in $\Omega$. Then $\tilde{g}:=g+\epsilon h$ belongs to some open set of the form \eqref{Open balls} provided that $\epsilon$ is small enough, and we may define the functional derivative
\begin{align}
\begin{split}
\delta_{(M,g;\Omega)} S_\EH [h]:&= \frac{d}{d\epsilon}\bigg |_{\epsilon = 0} S_\EH[M,g+\epsilon h;\Omega] \\
&= \int_\Omega (R_{\mu\nu}-\frac{1}{2}Rg_{\mu\nu})h^{\mu\nu}|\det g_{\mu\nu}|^{1/2}\,dx
\end{split}
\end{align}
where the compact support assumption is used to eliminate boundary terms in a classic computation; for details see Wald \cite{wald1984}. Requiring this to be zero for all $h$ recovers the Einstein equations \eqref{Einstein equations} in view of the fundamental lemma of calculus of variations. The issue of boundary terms that occur via variation without the compact support assumption and the introduction of counter-terms that annihilate them is discussed by several authors, see for example \cite{york1972,gibbons1977,parattu2015,lehner2016}. As we are ultimately interested in topological variations, we deliberately work with compactly supported variations to avoid overcomplicating our analysis.

It is convinient, and at the same time instructive for our purposes, to generalise this picture in a way that doesn't assume a fixed background manifold $M$. Instead of regarding the action as a functional $S_\EH|_M:\{ M\} \times D_\EH(M) \times \mathcal{K}(M) \rightarrow \R$ for each $M$ separately, we may equivalently define a unifying functional $S_\EH: \D_\EH  \rightarrow \R$, where $\D_\EH$ is now a space of triples $(M,g;\Omega)$ where $M$ is a differentiable manifold, $g\in D_\EH(M)$ and $\Omega\in\K(M)$. The definition of the action reads the same, i.e.
\begin{equation}
S_\EH[M,g;\Omega] := \int_\Omega R[g] |\det g_{\mu\nu}|^{1/2}\,dx,
\end{equation}
but the fixed argument $M$ has been implicitly promoted to a variable. The structure of $\D_\EH$ in the case of purely geometric variations is also straightforward: it is a disjoint union, both-set theoretically and - most importantly - topologically, of the domains of each $S_\EH|_M$, i.e.
\begin{equation}\label{Domain of Einstein-Hilbert}
\D_\EH = \coprod_{ M\in\M } \{ M \} \times D_\EH(M)\times \K(M),
\end{equation}
where the disjoint union is understood to be over the space of all non-diffeomorphic differentiable manifolds $\M$, to imply that there can be no continuous transitions from one underlying topology and differentiable structure, encoded in $M$, to another. The functional derivatives in this case are also identical, since the disjoint union topology only permits changes in the metric tensor.

\begin{remark}
The assumption that $\M$ exists is justified from a set-theoretic point of view if one considers the Whitney embedding theorem \cite{whitney1936,hirsch1976,lee2013}; if $n$ is the dimension of manifolds and $N=N(n)$ is the Whitney dimension, we have a concrete identification $\M\subset \mathcal{P}(\R^N)$ and multiple copies of the same manifold embedded in different ways can be modded out by diffeomorphism.
\end{remark}

\begin{definition}
A triple $(M,g;\Omega)\in \D_\EH$ will be called a \textit{variational configuration}.
\end{definition}

While it might appear that not much has been gained by reformulating Einstein-Hilbert in this way, the reformulation comes with the benefit of encoding all the information regarding the variational procedure in the form of an appropriate topology on $\D_\EH$, which readily offers itself to generalizations. This topology determines which variational configurations are close, and hence which variations are admissible. The disjoint union topology rules out topological variations, but it can be argued that this restriction is somewhat artificial. We will show in the next sections that the topology on $\D_\EH$ can be refined in a way (actually multiple ways) that also permits topological variations and in which the action is still continuous and evenmoreso differentiable, in an appropriate sense.

\begin{remark}[Moduli space structure]
Two variational configurations $(M_1,g_1;\Omega_1)$ and $(M_2,g_2;\Omega_2)$ may be identified in a natural way if there exists an isometry $\phi:(M_1,g_1) \rightarrow (M_2,g_2)$ such that $\phi(\Omega_1) = \Omega_2$. The space of variational configurations $\D_\EH$ modulo this identification then constitutes a moduli space, where the moduli are equivalence classes of variational configurations. Our framework can thus be regarded as an instance of calculus of variations in moduli spaces.
\end{remark}

\begin{remark}[Associated diffeological structure]
Another thing to note is that the final topology which we introduce here (and also later), with respect to admissible variations, is naturally associated with a diffeological structure on $\D_\EH$. This concept naturally generalizes the notion of a differential structure, which the space of variational configurations does not admit in our setup, due to the fact that the model spaces $W^{2,p}_c(\operatorname{Sym} \Omega)$ are not isomorphic to each other as $\Omega$ ranges over possible precompact open subsets. Nevertheless, the variational maps do specify what the smooth plots are on $\D_\EH$, which is precisely what is required to define a diffeological structure. While we do not pursue this issue further in this generality, we will be concerned later with associated tangent structures that arise in our constructions.
\end{remark}

Bellow we prove the continuity of Einstein-Hilbert action with respect to geometric variations in our setup. This is important in its own right, but also very usefull for what follows.

\begin{theorem}\label{Geometric continuity thm}
Let $M$ be a fixed differentiable manifold of dimension $n\geq 2$ and let $p>n/2$. Then the Einstein-Hilbert Lagrangian density
\begin{equation}
\mathcal{L}:W^{2,p}_\loc(\Lz M) \rightarrow L^1_\loc(\Lambda^n M),\quad g\mapsto R[g]|\det g_{\mu\nu}|^{1/2}\,dx,
\end{equation}
is continuous with respect to the associated local seminorms of each space. In particular, the localized Einstein-Hilbert action $S_\EH: \D_\EH \rightarrow \R$ is continuous with respect to geometric variations.
\end{theorem}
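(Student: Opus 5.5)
The plan is to reduce to a single coordinate chart and read off continuity from the structure of \eqref{scalar curvature}, which is polynomial in $g^{\mu\nu}$, $\partial g$, $\partial^2 g$. The local seminorms on $L^1_\loc$ are controlled by integrals over compact sets, and any compact $K\subset M$ is covered by finitely many relatively compact chart domains. It therefore suffices to fix a compact $K$ contained in a coordinate chart $U\cong V\subset\R^n$ and show the following. Whenever $g_k\to g$ in $W^{2,p}(K')$ on a slightly larger compact $K'\supset K$, we have $\mathcal{L}[g_k]\to\mathcal{L}[g]$ in $L^1(K)$. On $K$ the norms defined by $\hat g$ are equivalent to the Euclidean coordinate norms, so we may work with components $g_{\mu\nu}$ and ordinary partial derivatives.

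\textbf{Step 1: zeroth-order terms.} By Morrey's embedding $W^{2,p}(K')\hookrightarrow C^{0,\alpha}(K)$ for $p>n/2$, we get $g_k\to g$ uniformly on $K$. Since $g$ is continuous and nondegenerate on $K$, $|\det g|\ge c>0$ there. Uniform convergence then gives $|\det g_k|\ge c/2$ for large $k$. Hence $g_k^{\mu\nu}\to g^{\mu\nu}$ and $|\det g_k|^{1/2}\to|\det g|^{1/2}$ uniformly on $K$, with uniform bounds. This uses only that inversion and $A\mapsto|\det A|^{1/2}$ are smooth, hence locally Lipschitz, on nondegenerate matrices.

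\textbf{Step 2: derivative terms.} Second derivatives satisfy $\partial^2 g_k\to\partial^2 g$ in $L^p(K)$, hence in $L^1(K)$ by H\"older since $K$ has finite measure. For first derivatives, the Sobolev embedding $W^{2,p}\hookrightarrow W^{1,2}$ (valid since $p>n/2\ge 2n/(n+2)$) gives $\partial g_k\to\partial g$ in $L^2(K)$. The quadratic products then satisfy $\partial g_k\,\partial g_k\to\partial g\,\partial g$ in $L^1(K)$, via
\[
ab-a'b'=(a-a')b+a'(b-b')
\]
and Cauchy--Schwarz.

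\textbf{Step 3: assembling the terms.} Each term of $\mathcal{L}$ has the form $F(g)\cdot G$. Here $F(g)$ is a product of inverse-metric components and $|\det g|^{1/2}$, and converges uniformly and stays bounded. The factor $G$ is either $\partial^2 g$ or a product $\partial g\,\partial g$, and converges in $L^1$. The estimate
\[
\|F_kG_k-FG\|_{L^1}\le\|F_k-F\|_{L^\infty}\|G_k\|_{L^1}+\|F\|_{L^\infty}\|G_k-G\|_{L^1}
\]
gives $L^1(K)$ convergence. Since the spaces involved are metrizable (by countably many seminorms via an exhaustion), sequential continuity suffices.

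For the ``in particular'', a basic neighbourhood \eqref{Open balls} of $(M,g;\Omega)$ consists of $\tilde g$ with $\tilde g-g$ supported in $\Omega$ and small in $W^{2,p}(\Omega)$. Since $\bar\Omega$ is compact, $|S_\EH[M,\tilde g;\Omega]-S_\EH[M,g;\Omega]|\le\|\mathcal{L}[\tilde g]-\mathcal{L}[g]\|_{L^1(\bar\Omega)}$. This is small by the above, which gives continuity on each component of the disjoint union \eqref{Domain of Einstein-Hilbert} and hence on $\D_\EH$.

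\textbf{Main obstacle.} The delicate point is Step 1: ensuring uniform lower bounds on $|\det g_k|$, so that the inverse metrics do not blow up. This is exactly where $p>n/2$ and Morrey are needed, and the pointwise signature/nondegeneracy of the limit must propagate to the approximants uniformly on compacts.
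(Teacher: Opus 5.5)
Your proposal is correct and follows essentially the same route as the paper. Morrey's embedding gives uniform control of $g^{\mu\nu}$ and $|\det g_{\mu\nu}|^{1/2}$ on compacts, the embeddings $W^{2,p}_\loc\hookrightarrow W^{2,1}_\loc$ and $W^{2,p}_\loc\hookrightarrow W^{1,2}_\loc$ handle the second- and first-derivative terms, and a product estimate assembles the $L^1$ convergence. The only difference is bookkeeping: the paper splits off the volume factor and expands $R[g_*+\delta g]-R[g_*]$ to get explicit polynomial bounds in $\|\delta g\|_{W^{2,p}(\Omega)}$, whereas you argue qualitatively, term by term, using sequential continuity.
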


\begin{proof}
Let $K\subset M$ be a compact subset, and observe that 
\begin{equation}\label{difference split}
\begin{aligned}
&|S_\EH(M,g;K)-S_\EH(M,g_*;K)| \\
	&\hspace{1cm}= \bigg| \int_K (R[g]|\det g_{\mu\nu}|^{1/2} - R[g_*]|\det g_{*\mu\nu}|^{1/2})\,dx \bigg| \\
	&\hspace{2cm}\leq \int_K |R[g]|\big| |\det g_{\mu\nu}|^{1/2} - |\det g_{*\mu\nu}|^{1/2} \big|\,dx + \int_K |R[g]-R[g_*] |\det g_{*\mu\nu}|^{1/2} \,dx.
\end{aligned}
\end{equation}
It suffices to show that each of the integrals in the last line of \eqref{difference split} converges to zero as $g\rightarrow g_*$ in $W^{2,p}_\loc(M)$.

Let $\Omega\in\mathcal{K}(M)$ be a precompact open set such that $K\subset \Omega$. Due to the embedding $W^{2,p}_\loc(M)\hookrightarrow C^{0,\alpha}_\loc(M)$, it follows that $\| g_{\mu\nu}-g_{*\mu\nu} \|_{L^\infty(K)} \leq C(\Omega,K,p) \| g-g_* \|_{W^{2,p}(\Omega)}$ for some positive constant $C(\Omega,K,p)$. Since $\det g_{\mu\nu}$ depends continuously on the $g_{\mu\nu}$, it follows that we also have $\| |\det g_{\mu\nu}|^{1/2} - |\det g_{*\mu\nu}|^{1/2} \|_{L^\infty(K)} \leq C(\Omega,K,p) \| g-g_* \|_{W^{2,p}(\Omega)}$ for another positive constant $C(\Omega,K,p)$. The same argument reveals that $\det g_{\mu\nu}$ is bounded away from zero in $K$ if $\| g-g_* \|_{W^{2,p}(\Omega)}$ is small, which means that we also have $\| g^{\mu\nu} - g_*^{\mu\nu} \|_{L^{\infty}(K)} \leq C(\Omega,K,p,g_*) \| g-g_* \|_{W^{2,p}(\Omega)}$ for yet another positive constant $C(\Omega,K,p,g_*)$. This proves that uniform convergence of all nonlinearities in $K$ is controlled by the norm of $W^{2,p}(\Omega)$. Under these conditions, we further have the estimate
\begin{equation}\label{g-1 estimates}
\| g^{\mu\nu} \|_{L^{\infty}(K)} \leq \| g_*^{\mu\nu} \|_{L^\infty(K)} +C(K,p,g_*) \| g-g_* \|_{W^{2,p}(\Omega)}.
\end{equation}
in view of the triangle inequality. Moreover, the embeddings $W^{2,p}_\loc(M)\hookrightarrow W^{2,1}_\loc(M)$ and $W^{2,p}_\loc(M)\hookrightarrow W^{1,2}_\loc(M)$ imply that there exists a positive constant $C(\Omega,K,p)$ such that 
\begin{equation}\label{sobolev estimates}
\| \partial_\sigma\partial_\rho g_{\mu\nu} - \partial_\sigma \partial_\rho g_{*\mu\nu} \|_{L^1(K)} + \| \partial_\rho g_{\mu\nu} - \partial_\rho g_{*\mu\nu} \|_{L^2(K)} \leq C(\Omega,K,p) \| g-g_* \|_{W^{2,p}(\Omega)},
\end{equation}
which also implies
\begin{equation}\label{sobolev estimates g}
\| \partial_\sigma \partial_\rho g_{\mu\nu} \|_{L^1(K)} + \| \partial_\rho g_{\mu\nu} \|_{L^2(K)} \leq \| \partial_\sigma \partial_\rho g_{*\mu\nu} \|_{L^1(K)} + \| \partial_\rho g_{*\mu\nu} \|_{L^2(K)} + C(\Omega,K,p)\| g-g_* \|_{W^{2,p}(\Omega)}.
\end{equation}

Recalling \eqref{scalar curvature}, setting $\delta g = g-g_*$ and combining estimates \eqref{g-1 estimates} and \eqref{sobolev estimates g}, we see that for $\| \delta g \|_{W^{2,p}(\Omega)}$ small enough there holds
\begin{align}\label{uniform bound}
\| R[g] \|_{L^1(K)} &\leq C(\Omega,K,p,g_*)(1+\| \delta g \|_{W^{2,p}(\Omega)}),
\end{align}
which is uniformly bounded as $\delta g\rightarrow 0$ in $W^{2,p}_\loc(M)$. Moreover, substituting $g=g_*+\delta g$ in \eqref{scalar curvature} and grouping linear terms in the difference $R[g_*+\delta g] - R[g_*]$, we obtain  
\begin{equation}
\begin{aligned}
R[g_*+\delta g] - R[g_*]
&= \frac{1}{2}
\Big(
\big((g_*+\delta g)^{\mu\nu}(g_*+\delta g)^{\rho\sigma}
      - g_*^{\mu\nu} g_*^{\rho\sigma}\big)
\big(
\partial_\rho \partial_\nu g_{*\mu\sigma}
+ \partial_\sigma \partial_\mu g_{*\nu\rho} \\
&\hspace{0.5cm}
- \partial_\rho \partial_\sigma g_{*\mu\nu}
- \partial_\mu \partial_\nu g_{*\rho\sigma}
\big) + (g_*+\delta g)^{\mu\nu}(g_*+\delta g)^{\rho\sigma} \big(
\partial_\rho \partial_\nu \delta g_{\mu\sigma} \\
&\hspace{1cm}
+ \partial_\sigma \partial_\mu \delta g_{\nu\rho} - \partial_\rho \partial_\sigma \delta g_{\mu\nu}
- \partial_\mu \partial_\nu \delta g_{\rho\sigma} \big)
\Big) \\
&\hspace{0.5cm}
+ \frac{1}{4}
\Big(
\big((g_*+\delta g)^{\mu\nu}(g_*+\delta g)^{\rho\sigma}(g_*+\delta g)^{\alpha\beta}
      - g_*^{\mu\nu} g_*^{\rho\sigma} g_*^{\alpha\beta}\big) \big(
\partial_\rho g_{*\nu\alpha}\,\partial_\sigma g_{*\mu\beta}
\\
&\hspace{1cm}
- 2\,\partial_\rho g_{*\mu\alpha}\,\partial_\sigma g_{*\nu\beta} \big) + (g_*+\delta g)^{\mu\nu}(g_*+\delta g)^{\rho\sigma}(g_*+\delta g)^{\alpha\beta}
\big(
\partial_\rho \delta g_{\nu\alpha}\,\partial_\sigma g_{*\mu\beta}
\\
&\hspace{1.5cm}
+ \partial_\rho g_{*\nu\alpha}\,\partial_\sigma \delta g_{\mu\beta} - 2\,\partial_\rho \delta g_{\mu\alpha}\,\partial_\sigma g_{*\nu\beta}
- 2\,\partial_\rho g_{*\mu\alpha}\,\partial_\sigma \delta g_{\nu\beta} \\
&\hspace{2cm}
+ \partial_\rho \delta g_{\nu\alpha}\,\partial_\sigma \delta g_{\mu\beta} - 2\,\partial_\rho \delta g_{\mu\alpha}\,\partial_\sigma \delta g_{\nu\beta}
\big)
\Big).
\end{aligned}
\end{equation}
In view of the estimates \eqref{g-1 estimates} for the inverse metric, we have that
\begin{equation}\label{g-2 estimates}
\begin{aligned}
&|(g_*+\delta g)^{\mu\nu}(g_*+\delta g)^{\rho\sigma} - g_*^{\mu\nu}g_*^{\rho\sigma}| \\
&\hspace{2cm}\leq |(g_*+\delta g)^{\mu\nu}| |(g_*+\delta g)^{\rho\sigma}-g_*^{\rho\sigma}| + |(g_*+\delta g)^{\rho\sigma}| |(g_*+\delta g)^{\mu\nu}-g_*^{\mu\nu}|  \\
&\hspace{4cm}\leq C(\Omega,K,p,g_*)(\| \delta g \|_{W^{2,p}(\Omega)} + \| \delta g \|_{W^{2,p}(\Omega)}^2)
\end{aligned}
\end{equation}
in $K$, and similarly
\begin{equation}\label{g-3 estimates}
\begin{aligned}
&|(g_*+\delta g)^{\mu\nu}(g_*+\delta g)^{\rho\sigma}(g_*+\delta g)^{\alpha\beta} - g_*^{\mu\nu}g_*^{\rho\sigma}g_*^{\alpha\beta}| \\
&\hspace{2cm} \leq C(\Omega,K,p,g_*)(\| \delta g \|_{W^{2,p}(\Omega)} + \| \delta g \|_{W^{2,p}(\Omega)}^2 + \| \delta g \|_{W^{2,p}(\Omega)}^3)
\end{aligned}
\end{equation}
in $K$. Combining the nonlinear estimates \eqref{g-2 estimates} and \eqref{g-3 estimates} with the linear estimate \eqref{sobolev estimates}, it follows that
\begin{align}\label{L1 difference}
\| R[g]-R[g_*] \|_{L^1(K)} &\leq C(\Omega,K,g_*,p) (\| \delta g \|_{W^{2,p}(\Omega)} + \| \delta g \|_{W^{2,p}(\Omega)}^2 +\| \delta g \|_{W^{2,p}(\Omega)}^3)
\end{align}

Returning to \eqref{difference split} and taking into account \eqref{uniform bound} and \eqref{L1 difference}, it follows that $|S_\EH(M,g;K)-S_\EH(M,g_*;K)| \rightarrow 0$ whenever $g\rightarrow g_*$ in $W^{2,p}_\loc(M)$. This completes the proof.
\end{proof}

\section{Conceptual foundations for topological variations} \label{GR with topological variations}

In the previous section, the variational procedure with respect to geometric variations was encoded in the topology of $\D_\EH$, which was conveniently a disjoint union of Sobolev topologies, so as to rule out topological variations. The later require a broader framework in which the underlying manifold may also vary, which we make precise in the present and following sections.

To start, we specify what it means for the manifold with metric $(M,g)$ to be varied topologically within a given precompact open subset $\Omega \subset M$. Since changes of topology in $M$ cannot be mediated by changes of the metric alone, a radical approach suggests itself. If the topology of $(M,g)$ is to change in $\Omega$, then the entire submanifold $(\Omega,g|_\Omega)$ (with boundary $\partial\Omega$) should be replaced with another manifold $(\tilde{\Omega},\tilde{g}|_{\tilde{\Omega}})$ (with boundary $\partial\tilde{\Omega}$). The word ``replaced'' is perhaps insufficient to capture the full implications of this process, as the new topological space which is to be formed by the union of $(M\setminus\Omega,g|_{M\setminus\Omega})$ and $(\tilde{\Omega},\tilde{g}|_{\tilde{\Omega}})$ should again form a manifold with metric, which is in general not guaranteed. The new topology $(\tilde{\Omega},\tilde{g}|_{\tilde{\Omega}})$ has to be glued in the place of the missing $(\Omega,g|_\Omega)$, which is possible only when certain junction conditions are satisfied. A comprehensive account on the gluing of spacetime manifolds is given in the recent book by Khakshournia and Mansouri \cite{khakshournia2023}. We will simply mention that the weakest junction condition which must hold for the manifolds to be glueable is that the boundaries should match, i.e. there should be an isometry $\phi: (\partial\tilde{\Omega},\tilde{\iota}^*\tilde{g}) \rightarrow (\partial\Omega,\iota^* g)$ between the boundaries with their respective induced hypersurface metrics, where $\iota,\tilde{\iota}$ are the inclusions of the boundaries in the respective manifolds with boundary; this is the classical Darmois (D) junction condition. In this case the gluing is a new manifold with metric $(\tilde{M},\tilde{g})$ which is given by
\begin{equation}\label{gluing}
\tilde{M} := (M\setminus\Omega)\sqcup\tilde{\Omega}/\phi, \quad \tilde{g}:= g|_{M\setminus\Omega}\oplus \tilde{g}|_{\tilde{\Omega}},
\end{equation}
where the quotient means that points of the boundaries are identified via the isometry, and the direct sum of metrics means that each set being glued is piecewise equipped with its original metric. This process is depicted in Figure \ref{Figure1}.

\begin{figure}[h]
\centering
\includegraphics[scale=0.175]{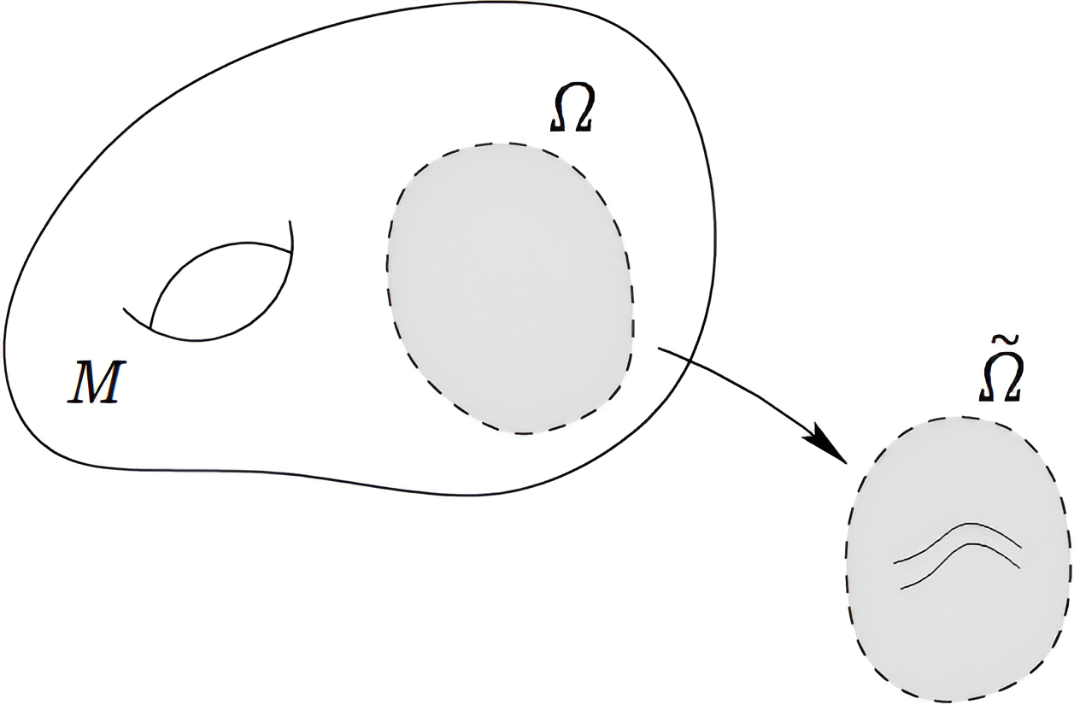}
\caption{Graphic impression of the topological variation. A domain of variation $\Omega$ is replaced with a new one $\tilde{\Omega}$ with new topology and metric, and matching boundary.}
\label{Figure1}
\end{figure}

Note that the (D) junction condition ensures that the metric $\tilde{g}$ is continuous in its components that are tangential to the gluing surface $\partial\Omega \equiv \partial\tilde{\Omega}$. This does not exclude the possibility of the metric being discontinuous in the normal direction; in particular we might have $\tilde{g}\not\in D_\EH(\tilde{M})$. This is problematic when one tries to define curvature, and for this reason stronger junction conditions are often employed, such as the Lichnerowicz (L) junction condition, which requires the normal components of the metric to be continuous as well; this allows for the realization of curvature as a distribution, see \cite{israel1966,mars2002,khakshournia2023}.

We will avoid the technical complications of junction conditions in a similar way that we avoided discussing boundary conditions in the previous section, by only considering variations of compact support. In the previous paragraph it was clear what compact support meant: the metric was unchanged in some open neighbourhood of the boundary. This notion can be extended to topological variations: we require that there exists an interior isometry between open neighborhoods $U$ and $\tilde{U}$ of the respective boundaries $\partial\Omega$ and $\partial\tilde{\Omega}$, say $\phi: (\tilde{U},\tilde{g}) \rightarrow (U,g)$. The recipe for the gluing is then the same as in \eqref{gluing}; this time any junction conditions are fulfilled automatically, since the gluing is the identical gluing in a neighborhood of the gluing hypersurface.

\begin{remark}
Since the initial domain of variation $\Omega$ is replaced with a new one $\tilde{\Omega}$, we see that in the proposed setup the domain of variation is itself subject to variation. The smooth variation of the domain with respect to a parametrisation of the boundary is known as the Weiss variation, and is associated with conserved quantities from the dynamical point of view. This is covered elsewhere (for instance see \cite{weiss1936,fengm2018}), and is different from what we do here; in the Weiss variation the topology of the underlying manifold and of the domain remains fixed.
\end{remark}

So a natural class of topological variations which matches the setup of Section \ref{GR without topological variations} consists of those obtained by compactly supported gluing operations. However, one runs into additional technical difficulties when attempting to define a topology for these variations. In the case of purely geometric variations, the basic open neighborhoods of $(M,g;\Omega)$ where families of metrics satisfying something like $\| \tilde{g} - g \|<\epsilon$ where the norm was the appropriate Sobolev norm for Einstein-Hilbert action, as seen in \eqref{Open balls}. In the case of topological variations, the metrics $\tilde{g}$ and $g$ belong to different tensor spaces (in particular, they are sections of bundles over different base manifolds $\tilde{M}$ and $M$) so it makes no sense to speak of their difference $\tilde{g}-g$, let alone a norm of that difference. A more subtle problem, equally important if we aspire to define derivatives, is the lack of an obvious differentiable or diffeological structure for this space, i.e. it is not clear what the infinitesimal topological variations should be.

Thus, a more sensible route is to instead adopt a bottom-up approach, introducing specific types of topological variations that can be made arbitrarily small in a suitable sense. While this means abandoning the complete generality of the framework presented above, it does indeed lead to a reasonable notion of infinitesimal topological variations. We will discuss two distinct types: disconnected and connected. The first type is the simplest to grasp conceptually and refers to new topology growing as disconnected components outside the original spacetime, also referred to as \emph{baby universes} in the terminology of Coleman, Giddings and Strominger \cite{coleman1988,giddings1988}. The second type refers to new topology arising as a connected sum on the original spacetime via controlled surgery, for example a small handle (wormhole) growing on the original spacetime, associated with \emph{spacetime foam} effects, as originally suggested by Wheeler and Hawking \cite{wheeler1957,hawking1978,hawking1982}.

\section{Disconnected topological variations}\label{Disconnected topological variations}

The simplest conceivable way of introducing new topology to a manifold is perhaps to add a new disconnected component. From the classical GR point of view, a disconnected component may be regarded as physically irrelevant, since it is causally inaccessible to observers on the initial spacetime. Following the intuition of Coleman, Giddings and Strominger \cite{coleman1988,giddings1988}, we instead argue that such an addition might be given the physical interpretation of the creation of a baby universe. Moreover, even disconnected components have non-trivial interactions from the quantum point of view, and hence are no longer physically irrelevant in that setting. These reasons suggest that topological variations in the form of disconnected components are physically motivated and should not be excluded from a reasonable formulation of quantum gravity and cosmology.

Mathematically, this can be implemented as follows. Recall that in Section \ref{GR without topological variations} variations where described not only in terms of an initial spacetime $(M,g)$, but rather a variational configuration $(M,g;\Omega)$ where $\Omega\subset M$ is a precompact open set in which the variation is supposed to take place. As mentioned in the previous section, in the regime of topological variations the domain itself is subject to change. Suppose now that we wish to add a closed disconnected component $(M',g')$ to the initial manifold. In this case the choice of $\Omega$ is not particularly important, as the variation is not really taking place ``inside'' any such $\Omega$. Mathematically, this addition amounts to the transformation
\begin{equation}\label{disjoint variation}
(M,g;\Omega) \longrightarrow (M\sqcup M', g \oplus g' ; \Omega \sqcup M').
\end{equation}
Nevertheless, the domain $\Omega$ is important when we consider mixed variations that vary both the metric within $\Omega$ and add the new component $(M',g')$ with whatever new topology and geometry it happens to carry.

Note that the topological variation \eqref{disjoint variation} is compatible with the general framework of topological variations outlined in Section \ref{GR with topological variations}. In particular, it coincides with the gluing \eqref{gluing} if we set 
\begin{equation}
(\tilde{\Omega}, \tilde{g}|_{\tilde{\Omega}}) = (\Omega \sqcup M', g|_\Omega \oplus g'|_{M'}),
\end{equation}
and is therefore a particular realization of this scheme. The disconnected topological variation is depicted in Figure \ref{Figure2}.

\begin{figure}[h]
\centering
\includegraphics[scale=0.225]{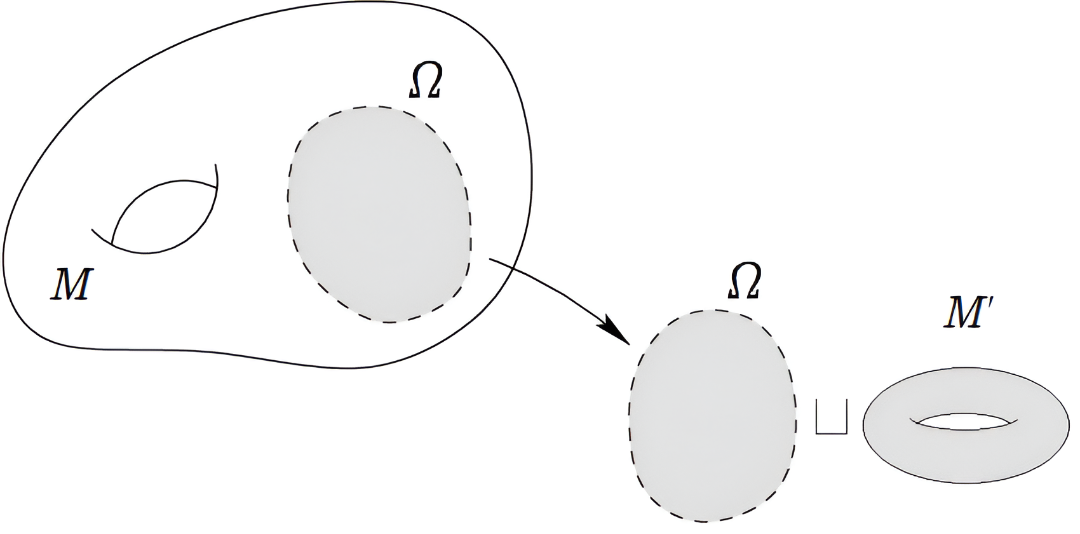}
\caption{Graphic impression of the disconnected topological variation. The topology of $\Omega$ remains unchanged, and a new disconnected component $M'$ is added. In the most general case, variation of the metric is permitted in $\Omega$, to reproduce the effects of classical geometric variations.}
\label{Figure2}
\end{figure}

To include disconnected topological variations in the case of Einstein-Hilbert action, let us first note that the domain of definition $\D_\EH$ as a set remains unchanged and still given by \eqref{Domain of Einstein-Hilbert}, i.e. the set-theoretic disjoint union
\begin{equation}\label{Domain of EH as set}
\bigcup_{M\in \M} \{M\} \times D_\EH(M) \times \K(M),
\end{equation}
consisting of variational configurations $(M,g;\Omega)$ as introduced in Section \ref{GR without topological variations}. The question now is what topology should this space be equipped with to include the notion of disconnected topological variations along with geometric ones. Motivated by the developments in Section \ref{GR without topological variations}, one may proceed to define a final topology with respect to both geometric and topological variations. 

There is, however, a subtle point that needs to be addressed, as it is not clear how the identity is recovered in variation \eqref{disjoint variation}. In fact, in the stated form the initial variational configuration $(M,g;\Omega)$ is not recovered for any $g' \in W^{2,p}(\Lz M')$, so technically this does not yet describe a variation of $(M,g;\Omega)$. The sense in which the identity is recovered is therefore dependent on the manner $g'$ exits the space $W^{2,p}(\Lz M')$, which hints towards degenerating configurations. The fact that the problem is motivated by the study of highly scale-dependent phenomena also suggests that the appropriate notion of proximity of the variation to the initial configuration must depend on the size of the adjoined manifold. A reasonable and physically motivated approach is therefore to consider limiting cases in which $g'$ degenerates; in this case $(M',g')$ is reduced to a lower dimensional object which has zero $n$-dimensional Hausdorff measure and can be ignored from this point of view. In fact, Hawking \cite{hawking1978} himself explicitly mentions both simplex length as a variational parameter and lower-dimensional collapse in his continuity argument based on simplicial decompositions. Topological variations of this type then occur via reverse-collapsing of lower dimensional objects.

The simplest realization of this idea is when the adjoined disconnected component degenerates to a point via a scaling of the metric. Mathematically this is further supported by the fact that $W^{2,p}(\Lz M')$ has the structure of an open \textit{cone} in the Banach space $W^{2,p}(\operatorname{Sym}M')$, i.e.
\begin{equation}
\forall g' \in W^{2,p}(\Lz M'):\ \forall \lambda>0:\ \lambda g' \in W^{2,p}(\Lz M'),
\end{equation}
so speaking of the limit $\lambda \rightarrow 0$ along any given ray is justified; in that case the Sobolev norm also converges to zero, in line with the regularity of our framework. These considerations also strongly suggest the following version of infinitesimal disconnected topological variation, given by
\begin{equation}\label{disjoint infinitesimal variation}
(M,g;\Omega) \longrightarrow (M\sqcup M', g \oplus \epsilon g' ; \Omega \sqcup M').
\end{equation}
where $\epsilon>0$ is taken to be arbitrarily small, and the identity corresponds to the limit $\epsilon=0$. The associated variational maps and final topology are made precise in the definition bellow.

\begin{definition}
\noindent
\begin{enumerate}
\item Given a variational configuration $(M,g;\Omega)$, let $\mathcal{U}(M,g;\Omega)$ be a neighborhood of the zero section in $W^{2,p}_c(\operatorname{Sym} \Omega)$ such that $g+h$ remains an admissible metric for all $h\in \mathcal{U}(M,g;\Omega)$. Assume, in addition, a closed manifold with metric $(M',g')$ with $g'\in D_\EH(M')$. We define the variational map $\Phi_{(M,g;\Omega, M', g')} : \mathcal{U}(M,g;\Omega)\times \R_+ \rightarrow \D_\EH$,
\begin{equation}
\Phi_{(M,g;\Omega, M', g')}(h,\epsilon) = \begin{dcases}
	(M,g+h;\Omega) & \text{if } \epsilon = 0 \\
  	(M\sqcup M', (g+h) \oplus \epsilon g' ; \Omega \sqcup M') & \text{if } \epsilon > 0
  \end{dcases}.
\end{equation}
\item Let $\tau_1$ denote the final topology of $\D_\EH$ with respect with the family of all variational maps as above.
\end{enumerate}
\end{definition}

\noindent
Note that this definition incorporates both geometric and disconnected topological variations in the same variational map; the case of purely geometric variations is recovered when $M'=\varnothing$.

\begin{remark}
Topology $\tau_1$ is the finest topology which promotes rays of $W^{2,p}(\Lz M')$ to directions of a tangent structure at the vertex $0\in W^{2,p}(\operatorname{Sym}M')$ of the cone $W^{2,p}(\Lz M')$. This tangent structure in particular coincides with the cone $W^{2,p}(\Lz M')$ itself with the origin adjoined. Since this paper is about the development of an infinitesimal topology-changing variational framework for geometric functionals arising in gravity, it will suffice for our purposes. Note however, that it is not the only appropriate one and certainly not the only one worth considering. The consideration of other natural candidates, including those permitting degeneration to limiting objects other than points, and also the discussion of associated geometric phenomena of scalar curvature blow-up which obstruct continuity of Einstein-Hilbert, will wait until Section \ref{Blow-up}. That discussion is enlightening regarding the context of choosing an appropriate topology and the challenges involved, and should help clarify the choices made here.
\end{remark}

Continuity of Einstein-Hilbert action under geometric variations has been established in Theorem \ref{Geometric continuity thm}. Based on heuristic arguments, Hawking \cite{hawking1978} has suggested that the action is continuous under at least certain topological variations. As we show in Section \ref{Blow-up}, the validity of this conclusion is actually highly dependent on the choice of variations, with many natural candidates making the action discontinuous. In what follows, we investigate continuity and differentiability of the Einstein-Hilbert functional in our proposed framework. First, we need the following lemma establishing the scaling properties of the Einstein-Hilbert Lagrangian density.

\begin{lemma}\label{Scaling Lemma}
Let $M$ be a differentiable manifold and let $g\in D_\EH(M)$. If $\mathcal{L}$ denotes the Einstein-Hilbert Lagrangian density on $M$, for any $\epsilon>0$ there holds
\begin{equation}
\mathcal{L}[\epsilon g] = \epsilon^{\frac{n-2}{2}}\mathcal{L}[g].
\end{equation}
\end{lemma}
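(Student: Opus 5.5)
The plan is to compute directly from the local coordinate expression \eqref{scalar curvature}, tracking the homogeneity of each ingredient under the constant rescaling $g_{\mu\nu}\mapsto \epsilon g_{\mu\nu}$. Since $\epsilon>0$ is a constant, $\epsilon g$ is again a metric of signature $(r,s)$ in $W^{2,p}_\loc$, so $\epsilon g\in D_\EH(M)$ and $\mathcal{L}[\epsilon g]$ is defined. The identity is pointwise (a.e.) in any coordinate chart, and since $\mathcal{L}$ is coordinate invariant it suffices to verify it chart by chart.

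First, the inverse scales as $(\epsilon g)^{\mu\nu}=\epsilon^{-1}g^{\mu\nu}$. Weak partial derivatives commute with multiplication by a constant, so $\partial_\rho(\epsilon g_{\mu\nu})=\epsilon\,\partial_\rho g_{\mu\nu}$ and $\partial_\rho\partial_\sigma(\epsilon g_{\mu\nu})=\epsilon\,\partial_\rho\partial_\sigma g_{\mu\nu}$. In \eqref{scalar curvature}, the second-order term carries two inverse metrics and one second derivative, giving a factor $\epsilon^{-2}\cdot\epsilon=\epsilon^{-1}$. The quadratic first-order term carries three inverse metrics and two first derivatives, giving $\epsilon^{-3}\cdot\epsilon^{2}=\epsilon^{-1}$. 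Hence $R[\epsilon g]=\epsilon^{-1}R[g]$ almost everywhere. This can also be seen invariantly: the Christoffel symbols are invariant under constant rescaling, so the Riemann and Ricci tensors are unchanged, and only the final contraction with $g^{\mu\nu}$ contributes $\epsilon^{-1}$.

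Next, $\det(\epsilon g_{\mu\nu})=\epsilon^n\det g_{\mu\nu}$, so $|\det(\epsilon g_{\mu\nu})|^{1/2}=\epsilon^{n/2}|\det g_{\mu\nu}|^{1/2}$. Multiplying the two factors gives $\mathcal{L}[\epsilon g]=\epsilon^{n/2-1}\mathcal{L}[g]=\epsilon^{\frac{n-2}{2}}\mathcal{L}[g]$ as densities.

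There is no real obstacle. The only care needed is to note that the Sobolev-regular setting does not interfere: weak derivatives are linear, and all the products appearing in \eqref{scalar curvature} are locally integrable by the discussion preceding Theorem \ref{Geometric continuity thm}, so every identity holds almost everywhere and hence as an equality in $L^1_\loc(\Lambda^nM)$.
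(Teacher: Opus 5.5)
Your proof is correct and is the paper's argument: the paper likewise uses $(\epsilon g)^{\mu\nu}=\epsilon^{-1}g^{\mu\nu}$ to get $R[\epsilon g]=\epsilon^{-1}R[g]$, and combines this with $|\det(\epsilon g_{\mu\nu})|^{1/2}=\epsilon^{n/2}|\det g_{\mu\nu}|^{1/2}$. Your term-by-term power counting in the coordinate formula for $R$ and your remark on weak derivatives fill in details the paper leaves implicit.
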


\begin{proof}
Under the scaling $\epsilon \mapsto \epsilon g_{\mu\nu}$ we have $(\epsilon g)^{\mu\nu} = \epsilon^{-1} g^{\mu\nu}$, and the scalar curvature \eqref{scalar curvature} transforms as
\begin{equation}
R[\epsilon g] = \epsilon^{-1}R[g].
\end{equation}
Moreover, the volume element transforms as
\begin{equation}
|\det (\epsilon g_{\mu\nu})\,|^{1/2} = \epsilon^{n/2} |\det g_{\mu\nu}\, |^{1/2},
\end{equation}
and since $\mathcal{L}[g]=R[g] |\det g_{\mu\nu} |^{1/2} \,dx$, the conclusion easily follows.
\end{proof}

Now we prove the continuity of Einstein-Hilbert with respect to $\tau_1$.

\begin{theorem}\label{Disjoint continuity thm}
Let $(M,g;\Omega)\in \D_\EH$ be a given variational configuration, and let $(M',g')$ be a closed manifold with metric $g'\in D_\EH(M')$. Then the map 
\begin{equation}
S_\EH \circ \Phi_{(M,g;\Omega,M',g')} : \mathcal{U}(M,g;\Omega)\times \R_+ \rightarrow \R
\end{equation}
is continuous if and only if $n>2$ or $n=2$ and $g'$ has zero mean scalar curvature. In particular, $S_\EH: (\D_\EH,\tau_1) \rightarrow \R$ is continuous if and only if $n>2$.
\end{theorem}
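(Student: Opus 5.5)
The plan is to compute $S_\EH\circ\Phi$ explicitly and reduce continuity to a one-variable question in $\epsilon$. For $\epsilon>0$ the domain $\Omega\sqcup M'$ is a disjoint union, so the integral splits:
\begin{equation*}
S_\EH\circ\Phi_{(M,g;\Omega,M',g')}(h,\epsilon) = S_\EH[M,g+h;\Omega] + S_\EH[M',\epsilon g';M'] = S_\EH[M,g+h;\Omega] + \epsilon^{\frac{n-2}{2}}\int_{M'} R[g']\,|\det g'_{\mu\nu}|^{1/2}\,dx .
\end{equation*}
The second equality is Lemma \ref{Scaling Lemma}. This is legitimate because $M'$ is closed, hence compact, so $M'$ itself is a precompact open set. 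Moreover $g'\in W^{2,p}(\Lz M')$ with $p>n/2$, so Theorem \ref{Geometric continuity thm} gives that $\mathcal{L}[g']$ is integrable on $M'$. Hence $A(g') := \int_{M'} R[g']\,d v_{g'}$ is a finite number. For $\epsilon=0$ the value is just $S_\EH[M,g+h;\Omega]$. So the map equals $F(h)+\epsilon^{(n-2)/2}A(g')\mathbf{1}_{\{\epsilon>0\}}$.

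The first term is continuous in $h$ with respect to the $W^{2,p}(\Omega)$ topology. This follows from Theorem \ref{Geometric continuity thm}, using that $\supp h\subset\Omega$ and that the $W^{2,p}_c$ topology is the subspace topology of $W^{2,p}(\Omega)$. One small point is that the theorem is stated on compact $K$. Since $h$ has compact support in $\Omega$ and $\Omega$ is precompact, I will apply it with $K=\overline{\Omega}$ inside a slightly larger precompact open set. So everything reduces to continuity of $\epsilon\mapsto \epsilon^{(n-2)/2}A(g')$ on $\R_+=[0,\infty)$, with value $0$ at $\epsilon=0$.

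The case split is then immediate:
\begin{itemize}
\item For $n>2$ the exponent is positive, so $\epsilon^{(n-2)/2}A\to0$ as $\epsilon\to0^+$, giving joint continuity in $(h,\epsilon)$.
\item For $n=2$ the term is constant, equal to $A(g')$ for all $\epsilon>0$. Continuity at $\epsilon=0$ holds iff $A(g')=0$, i.e.\ iff $g'$ has zero mean scalar curvature. In Riemannian signature, Gauss–Bonnet identifies this with $\chi(M')=0$, but I will not need that.
\end{itemize}
For the "only if" direction with $n=2$, I will take $h=0$ and the sequence $\epsilon_k\to0$. Then $S_\EH\circ\Phi(0,\epsilon_k)=F(0)+A\not\to F(0)$.

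For the statement about $\tau_1$, I will use the universal property of the final topology: $S_\EH$ is continuous on $(\D_\EH,\tau_1)$ iff every composite $S_\EH\circ\Phi$ is continuous. For $n>2$ this holds by the above. For $n=2$, I need to exhibit one closed surface with a metric of nonzero total curvature in the given signature. In Riemannian signature the round $S^2$ works, with total curvature $8\pi$. Since $n=2$ also allows Lorentzian signature, I must check that case too. There the expected obstacle is that closed Lorentzian surfaces are tori or Klein bottles, and their total curvature vanishes by a Lorentzian Gauss–Bonnet, as for flat tori.

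I expect this signature issue to be the one real obstacle. My first attempt will be to read the statement as concerning signatures for which such $M'$ exist, handled by the sphere for $(0,2)$ and $(2,0)$. For signature $(1,1)$, I would check the Lorentzian Gauss–Bonnet (Birman–Nomizu) to see whether the "only if" claim needs qualification. A lesser point of care is the well-definedness of $\mathcal{L}[g']$ in $L^1(M')$, which is already supplied by the regularity discussion preceding Theorem \ref{Geometric continuity thm}.
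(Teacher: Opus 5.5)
This is essentially the paper's argument: split the integral over $\Omega\sqcup M'$, treat the $\Omega$-part with Theorem~\ref{Geometric continuity thm} and the $M'$-part with Lemma~\ref{Scaling Lemma}. Your form $F(h)+G(\epsilon)$ is actually cleaner than the paper's variable-by-variable check, because it gives joint continuity directly.

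Your worry about signature $(1,1)$ is justified, and the paper does not address it. After passing to an orientable, time-orientable finite cover, the orthonormal frame bundle of a closed Lorentzian surface has structure group $SO^+(1,1)\cong\R$, so it is trivial. Hence $R\,dv$ is globally exact and the total scalar curvature vanishes (Birman--Nomizu). The $W^{2,p}$ case follows by density and Theorem~\ref{Geometric continuity thm}. So the ``only if'' in the $\tau_1$ clause holds only in the definite signatures, where your round-sphere example gives $\pm 8\pi\neq 0$.
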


\begin{proof}
Let $S(h,\epsilon):= S_\EH \circ \Phi_{(M,g;\Omega,M',g')}(h,\epsilon)$. We need to show that $S(h,\epsilon)\rightarrow S(h_*,\epsilon_*)$ whenever $\epsilon\rightarrow \epsilon_*$ in $\R_+$ and $h\rightarrow h_*$ in $W^{2,p}_c(\operatorname{Sym}\Omega)$. First observe that if $\epsilon_*>0$, then $\epsilon>0$ in a neighborhood of $\epsilon_*$ and
\begin{align}
S(h,\epsilon)-S(h_*,\epsilon_*) = \int_\Omega (\mathcal{L}[g+h] - \mathcal{L}[g+h_*]) + \int_{M'} (\mathcal{L}[\epsilon g']- \mathcal{L}[\epsilon_* g']).
\end{align}
Fixing $\epsilon=\epsilon_*>0$, the second term vanishes and $S(h,\epsilon_*)\rightarrow S(h_*,\epsilon_*)$ as $h\rightarrow h_*$ in $W^{2,p}_c(\operatorname{Sym}\Omega)$ in view of Theorem \ref{Geometric continuity thm}. Fixing $h=h_*$, the first term vanishes and Lemma \ref{Scaling Lemma} implies that
\begin{align}
S(h_*,\epsilon)-S(h_*,\epsilon_*) = \big(\epsilon^{\frac{n-2}{2}} - \epsilon_*^{\frac{n-2}{2}} \big) \int_{M'} \mathcal{L}[g'].
\end{align}
This is readily seen to converge to zero whenever $\epsilon_*>0$ and $\epsilon\rightarrow \epsilon_*$. 

The case $\epsilon_*=0$ is similar, although caution is required to manage the possibility of jumps and blow-ups. If $\epsilon=\epsilon_*=0$, the second term is again absent due to the fact that no disconnected component is added, and convergence in $h$ is established via Theorem \ref{Geometric continuity thm}. If $h=h_*$ is fixed and $\epsilon>0$, then Lemma \ref{Scaling Lemma} implies that
\begin{align}
S(h_*,\epsilon)-S(h_*,0) = \epsilon^{\frac{n-2}{2}} \int_{M'} \mathcal{L}[g'],
\end{align}
which converges to zero as $\epsilon\rightarrow 0^+$ if and only if $n>2$ or if $n=2$ and $g'$ has zero mean scalar curvature. This completes the proof.
\end{proof}

\noindent
This is the first instance in which we encounter a dimensional obstruction. Observe that if $n=2$, the action difference in the case of topological variation is $O(1)$ as the variation parameters approaches zero, which is a generically non-zero jump that renders the action discontinuous.

We are finally in a position to define a topological functional derivative for the case of disconnected topological variations.

\begin{definition}
\noindent
\begin{enumerate}
\item Let $(M,g;\Omega)\in \D_\EH$ be a fixed variational configuration, and let $(M',g')$ be a closed manifold with metric $g'\in D_\EH(M')$. Assume, moreover, that $F : (\D_\EH,\tau_1) \rightarrow \R$ is a continuous functional. The \textit{disconnected topological functional derivative of $F$ at $(M,g;\Omega)$ along $(M',g')$} is 
\begin{equation}
\delta_{(M,g;\Omega)} F [M',g'] := \frac{d}{d \epsilon} \bigg|_{\epsilon=0^+} F \circ \Phi_{(M,g;\Omega,M',g')}(0,\epsilon),
\end{equation}
provided that the one-sided derivative exists.
\item If the disconnected topological functional derivative exists for all $(M,g;\Omega)$ and all $(M',g')$, we say that $F$ is \textit{differentiable with respect to disconnected topological variations}.
\end{enumerate}
\end{definition}

\begin{remark}
It is worth noting that keeping the derivative one-sided is essential in order to maintain the signature of the variation; geometrically this is forced upon us by the tangent cone structure. Negative values of $\epsilon$ are easily seen to lead to reversed signature. 
\end{remark}

With a rigorous definition of a topological functional derivative in place, this discussion arrives at its culmination with the the following result regarding differentiability of Einstein-Hilbert within this framework.

\begin{theorem}\label{disjoint EH theorem}
Let $S_\EH: (\D_\EH,\tau_1) \rightarrow \R$ be the Einstein-Hilbert action extended to include disconnected topological variations of compact support, and let $n > 2$. Then the following statements are true.
\begin{enumerate}
\item If $n<4$, the action is not differentiable with respect to topological variations.
\item If $n=4$, the action is differentiable with respect to topological variations and the topological functional derivative is not identically zero. In particular, it is zero if and only if the variation has zero mean scalar curvature.
\item If $n>4$, the action is differentiable with respect to topological variations and the topological functional derivative is identically zero.
\end{enumerate}
\end{theorem}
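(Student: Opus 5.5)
By Theorem \ref{Disjoint continuity thm}, for $n>2$ the action is continuous with respect to $\tau_1$, so the disconnected topological functional derivative makes sense. The whole computation reduces to Lemma \ref{Scaling Lemma}. Fix a variational configuration $(M,g;\Omega)$ and a closed $(M',g')$ with $g'\in D_\EH(M')$. For $\epsilon>0$ we have $\Phi_{(M,g;\Omega,M',g')}(0,\epsilon)=(M\sqcup M',g\oplus\epsilon g';\Omega\sqcup M')$. Since $M'$ is closed, hence compact, the integral over $M'$ is finite, and by additivity of the integral over the disjoint union,
\begin{equation*}
S_\EH\circ\Phi_{(M,g;\Omega,M',g')}(0,\epsilon)-S_\EH[M,g;\Omega]=\epsilon^{\frac{n-2}{2}}\int_{M'}\mathcal{L}[g'] =: \epsilon^{\frac{n-2}{2}}A(g').
\end{equation*}
Here $A(g')=\int_{M'}R[g']\,dv_{g'}$ is the total scalar curvature, which equals $\mathrm{vol}(M',g')$ times the mean scalar curvature. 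So the one-sided derivative at $0^+$ is the limit of the difference quotient $\epsilon^{\frac{n-4}{2}}A(g')$ as $\epsilon\to0^+$.

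I would then split into the three cases of the theorem.

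If $n>4$, the exponent $\frac{n-4}{2}$ is positive, so the quotient tends to $0$ for every $(M',g')$. The derivative therefore exists and vanishes identically.

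If $n=4$, the quotient is constantly $A(g')$. The derivative therefore exists and equals $\int_{M'}R[g']\,dv_{g'}$, which vanishes if and only if the mean scalar curvature of $g'$ is zero. To see that it is not identically zero, I would exhibit a closed $4$-manifold with a metric whose total scalar curvature is nonzero: the round $S^4$ in Riemannian signature, or more generally a product such as $S^2\times T^2$ with a sign-suitable product metric for a general signature $(r,s)$, whose scalar curvature is a nonzero constant.

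If $2<n<4$, the exponent $\frac{n-4}{2}=-\tfrac12$ is negative, so the quotient diverges whenever $A(g')\neq0$, and the derivative fails to exist. Not being differentiable requires only one configuration where the derivative fails, so it suffices to produce a closed $3$-manifold $M'$ with $g'$ of signature $(r,s)$ and nonzero total scalar curvature.

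The main obstacle is this existence step for an arbitrary signature $(r,s)$, since some closed manifolds carry no metric of a given signature. I would handle it by a product construction: take $M'=S^2\times T^{n-2}$ with $g'=\pm g_{S^2}\oplus\eta$, where $\eta$ is a flat metric of the remaining signature on the torus. Flat metrics of any signature exist on tori, since they descend from constant-coefficient metrics on $\R^{n-2}$. The scalar curvature of $g'$ is then the constant $\pm2\neq0$, and it is smooth, hence in $W^{2,p}$, so $A(g')\neq0$. This works whenever $n-2\geq0$ and the sign on the $S^2$ factor absorbs at most two of the negative directions; if $r=n$ or $s=n$, I would use the sign $\mp$ globally instead. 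With this example in hand, all three cases follow.
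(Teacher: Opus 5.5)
Your proposal is correct and follows the paper's proof. Both use Lemma \ref{Scaling Lemma} to reduce the derivative to the difference quotient $\epsilon^{\frac{n-4}{2}}\int_{M'}\mathcal{L}[g']$ and read off the three cases from the sign of the exponent; your product examples $M'=S^2\times T^{n-2}$ with $g'=\pm g_{S^2}\oplus\eta$ add a step the paper leaves implicit, namely that every signature admits a closed $(M',g')$ with nonzero total scalar curvature, which is what makes the $n=3$ limit fail and the $n=4$ derivative nonzero.
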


\begin{proof}
Let $(M,g;\Omega)\in \D_\EH$ and $(M',g')$ a closed manifold with metric $g'\in D_\EH(M')$. We compute
\begin{equation}
S_\EH \circ \Phi_{(M,g;\Omega, M', g')}(0,\epsilon) = \begin{dcases}
	\int_\Omega \mathcal{L}[g] + \int_{M'} \mathcal{L}[\epsilon g'] & \text{if } \epsilon > 0 \\
  	\int_\Omega \mathcal{L}[g] & \text{if } \epsilon = 0
  \end{dcases}.
\end{equation}
It follows that for $\epsilon>0$ there holds
\begin{equation}\label{e-quotient disjoint final}
\frac{1}{\epsilon}\big[ S_\EH \circ \Phi_{(M,g;\Omega, M', g')}(0,\epsilon) - S_\EH \circ \Phi_{(M,g;\Omega, M', g')}(0,0) \big] = \epsilon^{\frac{n-4}{2}} \int_{M'} \mathcal{L}[g']. 
\end{equation}
The limit as $\epsilon \rightarrow 0^+$ exists if and only if $n\geq 4$, and is identically zero for all $(M',g')$ if and only if $n>4$. If $n=4$, it is zero if and only if the mean scalar curvature of $(M',g')$ is zero.
\end{proof}

Note that in $n=4$ the derivative is never vanishing along topological directions except for very specific ones. This means that the action does not have critical points in our framework in $n=4$, which is a rather unwholesome feature with potentially serious consequences. For $n<4$, the situation is even worse as a derivative doesn't exist at all, and one cannot talk about critical points to begin with. For $n>4$, however, the derivative is automatically zero in topological directions as seen from \eqref{e-quotient disjoint final} and the problem is resolved. In the latter case, Einstein manifolds are stationary points of the action with or without disconnected topological variations, and this leads to a consistent variational principle. 

\begin{remark}[The Kaluza-Klein model]
Our analysis indicates that, within this framework, the original variational interpretation of Einstein-Hilbert action in the extended regime of topological variations can be salvaged by introducing extra dimensions. It should be noted that the size or compactness of newly added dimensions is irrelevant to our results; the addition of even a tiny compact $S^1$ dimension is enough to resolve the issue. The simplest example of a model that works well under the topological variations introduced here is the otherwise problematic 5D Kaluza-Klein theory \cite{kaluza1921,klein1926,overduin1997}, which arises precisely in this manner.
\end{remark}

\begin{remark}[Stability of the zero metric]
In \cite{hawking1978}, Hawking has employed a method of path-integration over conformal classes and has suggested that the zero conformal factor of any metric constitutes a stationary point of the Einstein-Hilbert functional restricted to that conformal class. It can be argued, however, that fixing the conformal class is a strong constraint which drastically changes the problem. Our analysis shows that zero is definitely no longer a stationary point in $n=4$ once one abandons the convenient setting of conformal geometry.
\end{remark}

\section{Connected topological variations}\label{Connected topological variations}

The next step is to consider topological variations which attach new topology on a given manifold via surgery rather than as a disconnected component. This will be the case if, for example, we attach a small handle (or \textit{wormhole}, in physical terms) to the manifold and let its size vary. The implementation of this is going to be more complicated, as we need to take extra care for the gluing of the new topology and be mindful of how the gluing hypersurface changes as the variation scale changes. Regarding physical motivation, this type of variation corresponds directly to Wheeler and Hawking's original concept of spacetime foam.

The idea is roughly as follows. Scaling the metric by a factor of $\epsilon$ should scale lengths by a factor of $\sqrt{\epsilon}$. So we consider a manifold with boundary $(\tilde{B},\epsilon \tilde{g}|_{\tilde{B}})$ which has diameter scaling proportionally to $\sqrt{\epsilon}$ and will play the role of new growing topology. This will be glued in the place of a ball of diameter $\sqrt{\epsilon}$ of the initial manifold. Taking $\epsilon$ to be arbitrarily small achieves an infinitesimal version of this variation. Note that in the limit $\epsilon \rightarrow 0^+$ the new topology reduces to a point and for $\epsilon = 0$ we recover the original manifold. Given a domain of variation $\Omega \in \K(M)$ and a point $p\in \Omega$ around which the gluing is going to take place, $\epsilon$ can be chosen to be sufficiently small so that the gluing takes place within $\Omega$. This process is depicted in Figure \ref{Figure3}.

\begin{figure}[h]
\centering
\includegraphics[scale=0.75]{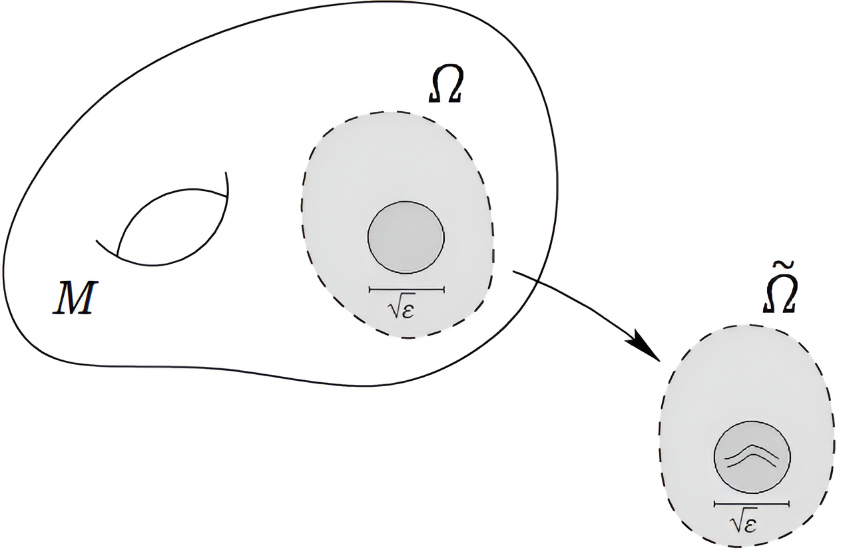}
\caption{Graphic impression of the connected topological variation. A ball of diameter $\sqrt{\epsilon}$ in $\Omega$ is replaced with another manifold with matching boundary, whose diameter also scales as $\sqrt{\epsilon}$. Since the metric scales by $\epsilon$, this is the topological equivalent of the geometric variation $g \rightarrow g+\epsilon h$.}
\label{Figure3}
\end{figure}

\hfill

\paragraph{\textbf{Connected topological variations in flat space.}} Due to the technical difficulties involved in ensuring gluing conditions along spheres of varying radii in a general background manifold $(M,g)$, and for instructive purposes, we will first implement the idea for the flat space $(\R^{n},\eta)$, where
\begin{equation}
\eta_{\mu\nu} = \mathrm{diag}(-1,\ldots,-1,1,\ldots,1)
\end{equation}
(this is Euclidean space for Euclidean signature $(r,s)=(0,n)$, and Minkowski space for Lorentzian signature $(r,s)=(1,n-1)$, and so on). The fully general case is developed in subsequent paragraphs using a rigorous surgery setup.

Let $\Omega\in \K(\R^{n})$ and $p\in \Omega$. Consider the Euclidean ball of radius $\sqrt{\epsilon}$ centered at $p$,
\begin{equation}
B_{\sqrt{\epsilon}}(p) := \{ x\in \R^{n} : (x^1)^2+ \cdots + (x^n)^2 < \epsilon \},
\end{equation}
where $(x^0,x^1,\ldots,x^{n-1})$ are normal coordinates spanned by an orthonormal basis at $p$. Since $\Omega$ is open, we always have $B_{\sqrt{\epsilon}}(p) \subset \Omega$ for $\epsilon$ small enough. The topological variation will be a scaling of a manifold with boundary $(\tilde{B}, \tilde{g}|_{\tilde{B}})$ with the following characteristics. First, the metric $\tilde{g}|_{\tilde{B}}$ should belong to the appropriate class of Sobolev metrics. Moreover, we want the boundaries of $\tilde{B}$ and $\B=B(0,1)\subset \R^n$ to match, and furthermore entire neighborhoods of those boundaries to match isometrically. This means there should be an isometry $\phi: (\tilde{U},\tilde{g}|_{\tilde{U}})\rightarrow (U,\eta|_U)$, where $\tilde{U}$ is a neighborhood of $\partial \tilde{B}$ and $U$ is a neighborhood of $\partial\B$. The symmetry of $\eta$ implies that $(B_{\sqrt{\epsilon}}(p),\eta)$ is isometric to $(B_1(p),\epsilon\eta)$. Thus $\phi$ can be promoted to an isometry between $(U,\epsilon\eta|_{U})$ and $(\tilde{U},\epsilon\tilde{g}|_{\tilde{U}})$, and the gluing 
\begin{align}\label{connected infinitesimal variation manifold}
\begin{split}
\tilde{M}&:= (\R^{n}\setminus B_{\sqrt{\epsilon}}(p)) \sqcup \tilde{B} / \phi, \\
\tilde{g}&:= \eta|_{\R^{n}\setminus B_{\sqrt{\epsilon}}(p)} \oplus \epsilon \tilde{g}|_{\tilde{B}},
\end{split}
\end{align}
where the quotient represents identification of points via the isometry, is well defined for any $\epsilon>0$ and any $p\in \R^{n}$. Note that this would \emph{not} be the case if $(\R^{n},\eta)$ was replaced with a generic manifold, as the balls of varying radii can no longer be expected to be isometric to the scaling of fixed ball.

The infinitesimal version of the connected topological variation in flat space is then
\begin{equation}\label{connected infinitesimal variation}
(\R^{n},\eta;\Omega) \longrightarrow (\tilde{M},\tilde{g}; \tilde{\Omega}), 
\end{equation} 
where $(\tilde{M},\tilde{g})$ is given by \eqref{connected infinitesimal variation manifold} and the domain $\Omega$, as a differentiable manifold, changes to
\begin{equation}
\tilde{\Omega}:= (\Omega\setminus B_{\sqrt{\epsilon}}(p)) \sqcup \tilde{B} / \phi.
\end{equation}

While we have not yet given a systematic treatment of the topology of connected topological variations and continuity of the action, it is still possible to calculate the difference in Einstein-Hilbert action brought forward by variation \eqref{connected infinitesimal variation}. This is
\begin{align}
\begin{split}
\Delta S(\epsilon) &= \int_{\Omega\setminus B_{\sqrt{\epsilon}}(p)} R[\eta] |\det \eta_{\mu\nu}|^{1/2}\,dx \\
	&\hspace{0.5cm}+ \int_{\tilde{B}} R[\epsilon \tilde{g}] |\det (\epsilon \tilde{g}_{\mu\nu})|^{1/2}\, dx \\
&\hspace{1cm} - \int_\Omega R[\eta] |\det \eta_{\mu\nu}|^{1/2}\,dx \\
	&= \epsilon^{\frac{n-2}{2}} \int_{\tilde{B}} R[\tilde{g}] |\det \tilde{g}_{\mu\nu}|^{1/2}\,dx,
\end{split}
\end{align}
where we have used the fact that $R[\eta] =0$ and Lemma \ref{Scaling Lemma}. We already observe exactly the same scaling behavior as in disconnected topological variations. A repetition of the same arguments reveals that the action is continuous under such variations provided that $n>2$ and differentiable provided that $n\geq 4$. For $n=4$, the functional derivative is
\begin{equation}
\lim_{\epsilon\rightarrow 0^+} \frac{\Delta S(\epsilon)}{\epsilon} =  \int_{\tilde{B}} R[\tilde{g}] |\det \tilde{g}_{\mu\nu}|^{1/2}\,dx,
\end{equation}
while for $n>4$ it is identically zero. This is generalized and fully justified in the next paragraphs.

\hfill

\paragraph{\textbf{The gluing setup.}} We begin here the treatment of connected topological variations in full generality. Let $(M,g;\Omega)\in \D_\EH$ be a given variational configuration. To obtain a notion of shrinking balls with smooth boundary as in the flat case, we will work in normal coordinates of some auxiliary smooth Riemannian metric $\hat{g}$. Let $p\in \Omega$ and consider an orthonormal basis $\{ \hat{e}_1,\ldots,\hat{e}_n \}$ of $T_p M$ with respect to $\hat{g}$, i.e.
\begin{equation}
\hat{g}_p(\hat{e}_\mu,\hat{e}_\nu) = \delta_{\mu\nu} = \operatorname{diag}(1,\ldots,1).
\end{equation}
Normal coordinates are then defined via the exponential mapping $(\hat{x}^1,\ldots,\hat{x}^n) \mapsto \exp^{\hat{g}}_p(\hat{x}^\mu \hat{e}_\mu)$, which is a diffeomorphism between an open neighboorhood of $0\in \R^n$ and an open neighborhood of $p\in M$. Since $\Omega$ is open, it follows that there exists $\hat{\epsilon}>0$, such that the coordinate balls
\begin{equation}
\hat{B}_\epsilon(p) = \{ q\in M: |\hat{x}(q)|<\epsilon \},\quad |\hat{x}|^2 = \sum_{i=1}^n (\hat{x}^i)^2,
\end{equation}
which coincide with the geodesic balls of $\hat{g}$ of radius $\epsilon$, are all contained in $\Omega$ for $0<\epsilon< \hat{\epsilon}$ and have the topology of a Euclidean ball.

Since we want the metric in the topological variation to scale proportional to $\epsilon$, the correct scaling for length is proportional to $\sqrt{\epsilon}$, and so the appropriate family of shrinking balls to consider is $\epsilon \mapsto \hat{B}_{\sqrt{\epsilon}}(p)$ for $0<\epsilon<\hat{\epsilon}^2$. Let us now remark how the general case is more complicated than the flat case explored earlier. Let 
\begin{equation}
\B = \{ \bar{x}\in \R^n : |\bar{x}|<1 \}
\end{equation}
be the unit Euclidean ball with its Cartesian coordinates $\bar{x} = (\bar{x}^1,\ldots, \bar{x}^n)$. The balls $(\hat{B}_{\sqrt{\epsilon}}(p),g)$ are in general \textit{not} isometric to a scaling of the form $(\B,\epsilon\bar{g})$ for some metric $\bar{g}$. This was exactly the case in the flat space $(\R^n,\eta)$ with $\hat{g}_{\mu\nu}=\delta_{\mu\nu}$ and $\bar{g}_{\mu\nu}=\eta_{\mu\nu}$, and allowed for the consistent gluing of a scaled manifold with boundary provided that it was flat itself near its boundary, without extra effort. To illustrate this point, consider the following counterexample: suppose we have a manifold which is everywhere flat except a small region, and let $p$ be a point in the vicinity of this region, but not inside it. Then for $\epsilon$ small enough, $\hat{B}_{\sqrt{\epsilon}}(p)$ does not intersect the non-flat region and is isometric to $(\B,\epsilon \eta)$, where $\eta$ here stands for the corresponding flat metric of $\R^{n}$. But once $\epsilon$ becomes large enough to intersect the non-flat region, the ball is no longer isometric to the above scaling.

So, on one hand, we are left to work with the family of balls $(\hat{B}_{\sqrt{\epsilon}}(p),g)$ as such, and ensure gluing conditions are met by employing more complicated methods than scaling. On the other hand we would still want some sort of scaling to take place, as this is our primary method for encoding the shrinking of the new topology to a point. Thus we have to provide a middle way that reconciles these two features: scaling of the new topology on one hand, and meeting of gluing conditions not obtainable by scaling on the other hand. Despite the fact that the balls are different manifolds with boundary, we would nevertheless like to have a unified description of their geometry in a manner that makes their metrics comparable. There is a way to do this, and starts with the observation that, as differentiable manifolds, the balls $\hat{B}_{\sqrt{\epsilon}}(p)$ are all diffeomorphic to $\B$. The diffeomorphisms
\begin{equation}
\phi_\epsilon: \B \rightarrow \hat{B}_{\sqrt{\epsilon}}(p),\quad \hat{x}^\mu\circ\phi_\epsilon(\bar{x}) = \sqrt{\epsilon}\cdot \bar{x}^\mu,
\end{equation}
obtained by the scaling of normal coordinates by $\sqrt{\epsilon}$, can be further promoted to isometries with respect to the original metric $g$ by pulling back the metric, i.e.
\begin{equation}
\phi_\epsilon: (\B, \bar{g}_\epsilon):= (\B, \phi_\epsilon^* g) \rightarrow (\hat{B}_{\sqrt{\epsilon}}(p) , g).
\end{equation}
In this way, all the information about the geometry of the shrinking balls is now contained in the family $(\B,\bar{g}_\epsilon)$. A useful property of this setup is that the deformation $\epsilon \mapsto \bar{g}_\epsilon$ is continuous in the appropriate Sobolev norm.

\begin{lemma}\label{Contraction Lemma}
Let $g\in W^{2,p}(\Lz \Omega)$. Then the map $\epsilon \mapsto \bar{g}_\epsilon := \phi_\epsilon^*g$ is continuous from $(0,\hat{\epsilon})$ to $W^{2,p}(\Lz \B)$.
\end{lemma}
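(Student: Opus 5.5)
Write $g_{\mu\nu}$ for the components of $g$ in the $\hat g$-normal coordinates $\hat x$ on $\hat B_{\hat\epsilon}(p)\subset\Omega$. Since $\hat x\circ\phi_\epsilon(\bar x)=\sqrt{\epsilon}\,\bar x$ is linear, the pull-back is explicit in the Cartesian coordinates $\bar x$ of $\B$:
\begin{equation*}
(\bar g_\epsilon)_{\mu\nu}(\bar x)=\epsilon\, g_{\mu\nu}(\sqrt{\epsilon}\,\bar x),\qquad
\partial^\alpha(\bar g_\epsilon)_{\mu\nu}(\bar x)=\epsilon^{1+|\alpha|/2}\,(\partial^\alpha g_{\mu\nu})(\sqrt{\epsilon}\,\bar x),\quad |\alpha|\le 2,
\end{equation*}
where the chain rule holds for weak derivatives because the change of variables is a linear dilation. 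A pointwise inner product of signature $(r,s)$ stays of signature $(r,s)$ under pull-back by a diffeomorphism, so $\bar g_\epsilon$ is a section of $\Lz\B$. On $\B$ we may use the Euclidean metric as the auxiliary metric. Hence the lemma reduces to a scalar statement: for each $f=\partial^\alpha g_{\mu\nu}$ with $|\alpha|\le 2$, the map
\begin{equation*}
s\mapsto c(s)\,f(s\,\cdot\,)\in L^p(\B),\qquad s=\sqrt{\epsilon},\quad c(s)=s^{2+|\alpha|},
\end{equation*}
is continuous, where $f\in L^p(\hat B_{\hat\epsilon}(p))$. Here I read the admissible parameter range as the one for which $\hat B_{\sqrt\epsilon}(p)\subset\Omega$, i.e.\ $s\in(0,\hat\epsilon)$. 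The prefactor $c(s)$ is a continuous scalar, so it suffices to prove continuity of the dilation $D_s f:=f(s\,\cdot\,)$.

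Fix $s_0\in(0,\hat\epsilon)$ and restrict to $s\in[s_0/2,s_1]$ with $s_0<s_1<\hat\epsilon$. By change of variables,
\begin{equation*}
\|D_s f\|_{L^p(\B)}= s^{-n/p}\,\|f\|_{L^p(B_s)}\le (s_0/2)^{-n/p}\,\|f\|_{L^p(B_{s_1})},
\end{equation*}
so on this interval the operators $D_s$ are uniformly bounded from $L^p(B_{s_1})$ to $L^p(\B)$. Now approximate $f$ in $L^p(B_{s_1})$ by some $\varphi\in C^\infty(\overline{B_{s_1}})$, which is possible since $p<\infty$. Then split
\begin{equation*}
\|D_sf-D_{s_0}f\|_{L^p(\B)}\le \|D_s(f-\varphi)\|+\|D_s\varphi-D_{s_0}\varphi\|+\|D_{s_0}(\varphi-f)\|.
\end{equation*}
The two outer terms are at most $2(s_0/2)^{-n/p}\|f-\varphi\|_{L^p(B_{s_1})}$, which can be made arbitrarily small. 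The middle term tends to $0$ as $s\to s_0$, because $\varphi$ is uniformly continuous on $\overline{B_{s_1}}$, so $D_s\varphi\to D_{s_0}\varphi$ uniformly on $\B$, and $\B$ has finite measure. Summing over $|\alpha|\le 2$ and over the components $\mu,\nu$ gives continuity of $\epsilon\mapsto\bar g_\epsilon$ in $W^{2,p}(\B)$.

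The main obstacle is exactly this $L^p$-continuity of dilations, since $g$ is only a Sobolev metric and not $C^2$. The proof above handles it with the standard pair of tools: uniform operator bounds on compact parameter intervals, combined with density of smooth functions. The rest is bookkeeping through the chain rule.
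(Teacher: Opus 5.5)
Your proof is correct and follows essentially the same route as the paper. Both compute the scaling $\partial^\alpha(\bar g_\epsilon)_{\mu\nu}=\epsilon^{1+|\alpha|/2}(\partial^\alpha g_{\mu\nu})\circ\phi_\epsilon$, separate off the continuous prefactor, and prove $L^p$-continuity of dilations via uniform continuity of continuous functions plus density. Your uniform bound $\|D_s f\|_{L^p(\B)}\le (s_0/2)^{-n/p}\|f\|_{L^p(B_{s_1})}$ is exactly what makes rigorous the density step the paper only calls ``well-known''. Your reading of the parameter range as $\sqrt{\epsilon}<\hat\epsilon$ also correctly resolves a slip in the statement.
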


\begin{proof}
First observe that, by the definition of the pullback, there holds
\begin{equation}\label{scaling metric}
\begin{aligned}
(\phi_\epsilon^*g)_{\bar{\mu}\bar{\nu}}(\bar{x})&= (\phi_\epsilon^*g) \bigg( \frac{\partial}{\partial \bar{x}^\mu}\bigg|_{\bar{x}},\frac{\partial}{\partial \bar{x}^\nu}\bigg|_{\bar{x}} \bigg) \\
	&= g \bigg( (\phi_{\epsilon})_* \frac{\partial}{\partial \bar{x}^\mu}\bigg|_{\bar{x}},(\phi_{\epsilon})_*  \frac{\partial}{\partial \bar{x}^\nu}\bigg|_{\bar{x}} \bigg) \\
	&=g \bigg( \sqrt{\epsilon}\frac{\partial}{\partial \hat{x}^\mu}\bigg|_{\phi_\epsilon(\bar{x})},\sqrt{\epsilon}\frac{\partial}{\partial \hat{x}^\nu}\bigg|_{\phi_\epsilon(\bar{x})} \bigg) \\
	&=\epsilon g_{\hat{\mu}\hat{\nu}}\circ\phi_\epsilon(\bar{x}).
\end{aligned}
\end{equation}
The weak derivatives of $\phi_\epsilon^*g$ up to order two are then
\begin{equation}
(\phi_\epsilon^*g)_{\bar{\mu}\bar{\nu},\bar{\rho}}(\bar{x}) = \epsilon^{3/2} g_{\hat{\mu}\hat{\nu},\hat{\rho}}\circ \phi_\epsilon(\bar{x}), \quad (\phi_\epsilon^*g)_{\bar{\mu}\bar{\nu},\bar{\rho}\bar{\sigma}}(\bar{x}) = \epsilon^{2} g_{\hat{\mu}\hat{\nu},\hat{\rho}\hat{\sigma}}\circ \phi_\epsilon(\bar{x}).
\end{equation}
So if $\beta$ denotes a multi-index of order $|\beta|\leq 2$, we have shown that
\begin{equation}
\partial_{\bar{\beta}}(\phi_\epsilon^* g)_{\bar{\mu}\bar{\nu}}(\bar{x}) = \epsilon^{1+|\beta|/2} \partial_{\hat{\beta}}g_{\hat{\mu}\hat{\nu}}\circ \phi_\epsilon(\bar{x}).
\end{equation}
It follows that for $0<\epsilon_*<\hat{\epsilon}$,
\begin{equation}
\begin{aligned}
\| \phi_\epsilon^*g - \phi_{\epsilon_*}^*g \|_{W^{2,p}(\B,\bar{g})} &\leq C \sum_{|\beta|\leq 2} \sum_{\mu,\nu=1}^{n} \| \partial_{\bar{\beta}} (\phi_\epsilon^*g)_{\bar{\mu}\bar{\nu}} - \partial_{\bar{\beta}} (\phi_{\epsilon_*}^*g)_{\bar{\mu}\bar{\nu}} \|_{L^p(\B,\bar{g})} \\
	&= C \sum_{|\beta|\leq 2} \sum_{\mu,\nu=1}^{n} \| \epsilon^{1+|\beta|/2} \partial_{\hat{\beta}} g_{\hat{\mu}\hat{\nu}}\circ \phi_\epsilon - \epsilon^{1+|\beta|/2} \partial_{\hat{\beta}} g_{\hat{\mu}\hat{\nu}}\circ \phi_{\epsilon_*} \\
	&\hspace{0.5cm}+ \epsilon^{1+|\beta|/2} \partial_{\hat{\beta}} g_{\hat{\mu}\hat{\nu}}\circ\phi_{\epsilon_*} - \epsilon_*^{1+|\beta|/2} \partial_{\hat{\beta}} g_{\hat{\mu}\hat{\nu}}\circ\phi_{\epsilon_*} \|_{L^p(\B,\bar{g})} \\
	&\leq C \sum_{|\beta|\leq 2} \sum_{\mu,\nu=1}^{n} \big\{ \epsilon^{1+|\beta|/2} \| \partial_{\hat{\beta}} g_{\hat{\mu}\hat{\nu}}\circ\phi_\epsilon - \partial_{\hat{\beta}} g_{\hat{\mu}\hat{\nu}}\circ\phi_{\epsilon_*} \|_{L^p(\B,\bar{g})} \\
	&\hspace{0.5cm}+ |\epsilon^{1+|\beta|/2}-\epsilon_*^{1+|\beta|/2}|\cdot \| \partial_{\hat{\beta}} g_{\hat{\mu}\hat{\nu}}\circ\phi_{\epsilon_*} \|_{L^p(\B,\bar{g})} \big\}
\end{aligned}
\end{equation}
For the second term in the last line we have that
\begin{equation}
\begin{aligned}
\| \partial_{\hat{\beta}} g_{\hat{\mu}\hat{\nu}} \circ \phi_{\epsilon_*} \|_{L^p(\B,\bar{g})} &= \| \partial_{\hat{\beta}} g_{\hat{\mu}\hat{\nu}} \|_{L^p(\hat{B}_{\sqrt{\epsilon_*}}(p),(\phi_{\epsilon_*})_* \bar{g})} \\
	&\leq C(\epsilon_*,\hat{g}) \| \partial_{\hat{\beta}} g_{\hat{\mu}\hat{\nu}} \|_{L^p(\hat{B}_{\sqrt{\epsilon_*}}(p),\hat{g})} \\
	&\leq C(\epsilon_*,\hat{g}) \| g \|_{W^{2,p}(\Omega,\hat{g})} \\
	&< \infty,
\end{aligned}
\end{equation}
where the diffeomorphism invariance of the integral and the equivalence of Sobolev norms in compact sets has been used in that order. Since $|\epsilon^{1+|\beta|/2}-\epsilon_*^{1+|\beta|/2}| \rightarrow 0$ as $\epsilon\rightarrow \epsilon_*$, this establishes convergence of the second term to zero. Regarding the first term, first note that $\epsilon^{1+|\beta|/2}$ is positive and bounded as $\epsilon\rightarrow \epsilon_*$. By virtue of uniform continuity of continuous functions in compact sets, it is straightforward to show that 
\begin{equation}
\| \partial_{\hat{\beta}} g_{\hat{\mu}\hat{\nu}}\circ\phi_\epsilon - \partial_{\hat{\beta}} g_{\hat{\mu}\hat{\nu}}\circ\phi_{\epsilon_*} \|_{L^p(\B,\bar{g})} \rightarrow 0
\end{equation}
as $\epsilon\rightarrow \epsilon_*$ when $\partial_{\hat{\beta}} g_{\hat{\mu}\hat{\nu}}$ is continuous. The general case where $\partial_{\hat{\beta}} g_{\hat{\mu}\hat{\nu}} \in L^p(\Omega)$ follows from a well-known density argument. This completes the proof.
\end{proof}

The topological variation is obtained by gluing a manifold with boundary along the boundary of the removed geodesic balls. The new topology itself should grow as a scaling $\epsilon \tilde{g}$ for some metric $\tilde{g}$, while near the boundary its metric should agree with the metric of the shrinking balls, or equivalently the metric $\bar{g}_\epsilon$. This is achieved as follows. Let $\tilde{B}$ be a compact manifold with boundary, whose boundary $\partial \tilde{B}$ is diffeomorphic to $\partial \B = S^{n-1}$. Then there exist tubular neighborhoods $\W=\B\setminus B(0,\epsilon_1)$ of $\partial \B$ and $\tilde{W}$ of $\partial \tilde{B}$ (also called \textit{collars}, see e.g. Lee \cite{lee2013}) and a diffeomorphism $\psi: \tilde{W} \rightarrow \W$. Note that since $\B$ and $\tilde{B}$ may have different topology, the diffeomorphism may not be extended over the entire manifolds; if that was the case we would essentially be back to the classical framework of geometric variations. By pulling back the metric $\bar{g}_\epsilon$, we can promote $\psi$ to a family of isometries 
\begin{equation}
\psi_\epsilon: (\tilde{W},\tilde{g}_\epsilon):= (\tilde{W}, \psi^* \bar{g}_\epsilon) \rightarrow (\W,\bar{g}_\epsilon).
\end{equation}
This ensures that gluing conditions are met between $(\tilde{W},\tilde{g}_\epsilon)$ and the complement of the shrinking ball $\hat{B}_{\sqrt{\epsilon}}(p)$ in $\Omega$.

To ensure the smooth transition between the metric $\epsilon\tilde{g}$ in the central region, where the new topology emerges, and the metric $\tilde{g}_\epsilon$ in the boundary region, we will employ a bump function argument. In particular, let $\U = \B\setminus B(0,\epsilon_2) \subsetneqq \W \subsetneqq \B$ be a neighborhood of $\partial\B$ and $\tilde{U} \subsetneqq \tilde{W} \subsetneqq \tilde{B}$ of $\partial \tilde{B}$ such that $\psi(\tilde{U}) = \U$. Take the metric to be equal to $\tilde{g}_\epsilon|_{\tilde{U}}$ on $\tilde{U}$ and equal to $\epsilon\tilde{g}|_{\tilde{B}\setminus\tilde{W}}$ in $\tilde{B}\setminus\tilde{W}$. What remains is to define the metric in the intermediate region $\tilde{W}\setminus\tilde{U}$. This is going to be the transition region. Let $f:\tilde{B} \rightarrow [0,1]$ be a smooth cutoff function such that $f|_{\tilde{B}\setminus\tilde{W}} = 1$ and $f|_{\tilde{U}} = 0$. The metric on $\tilde{W}\setminus\tilde{U}$ is then defined as the pointwise convex combination 
\begin{equation}
\tilde{g}_\epsilon = \epsilon f \tilde{g}|_{\tilde{W}\setminus\tilde{U}} + (1-f)\psi^* \bar{g}_\epsilon|_{\W\setminus\U}
\end{equation}
One should exercise caution, however, as convex combinations of metrics that have signature other than Riemannian may end up being degenerate although the metrics themselves are not. We therefore explicitly \textit{assume} that this is not the case, which places an additional requirement on $\tilde{g}$. This is not terribly restrictive however; by working in an orthonormal basis one can show that there are many metrics satisfying this requirement. In our setup, this matching property depends only on $g$ and $\tilde{g}$ and is not affected as $\epsilon$ runs small, as we prove bellow.

\begin{lemma}\label{linear interpolation lemma}
Let $\tilde{g}$ be such that the pointwise convex combination $f(\psi_*\tilde{g})_{\bar{\mu}\bar{\nu}}+(1-f)g_{\hat{\mu}\hat{\nu}}(p)$ is an element of $W^{2,p}(\Lz \W\setminus\U)$. Then 
\begin{equation}
\tilde{g}_\epsilon = \epsilon f \tilde{g} + (1-f)\psi^* \bar{g}_\epsilon
\end{equation}
is an element of $W^{2,p}(\Lz \tilde{W}\setminus\tilde{U})$ for all positive and suitably small $\epsilon$. In particular, 
\begin{equation}\label{transition approximate scaling}
(\psi_*\tilde{g}_\epsilon)_{\bar{\mu}\bar{\nu}}(\bar{x}) = \epsilon (f(\psi_* \tilde{g})_{\bar{\mu}\bar{\nu}}(\bar{x}) + (1-f)g_{\hat{\mu}\hat{\nu}}(p)) + O(\epsilon^{1+\alpha/2}),
\end{equation}
where $\alpha=\alpha(n,p)>0$ is the exponent of the Morrey embedding.
\end{lemma}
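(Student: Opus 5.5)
The plan is to work entirely in the $\bar{x}$ coordinates on $\W\setminus\U$, pushing everything forward by $\psi$, and to show that $\epsilon^{-1}\psi_*\tilde{g}_\epsilon$ converges to the reference convex combination
\begin{equation}
G_0 := f(\psi_*\tilde{g})_{\bar{\mu}\bar{\nu}}+(1-f)g_{\hat{\mu}\hat{\nu}}(p).
\end{equation}
By hypothesis $G_0$ is a nondegenerate metric of signature $(r,s)$ of class $W^{2,p}$. Signature and nondegeneracy are open conditions in the $C^0$ topology. The transition region $\overline{\W\setminus\U}$ is compact, and Morrey's embedding controls $C^0$ by $W^{2,p}$. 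So it suffices to prove that $\epsilon^{-1}\psi_*\tilde{g}_\epsilon - G_0 \to 0$ uniformly on $\overline{\W\setminus\U}$ as $\epsilon\to 0^+$, with rate $O(\epsilon^{\alpha/2})$. Multiplying by $\epsilon>0$ does not change the signature, which gives the first claim; the expansion \eqref{transition approximate scaling} is the same statement multiplied by $\epsilon$.

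The key computation is the formula \eqref{scaling metric} from the proof of Lemma \ref{Contraction Lemma}, namely $(\bar{g}_\epsilon)_{\bar{\mu}\bar{\nu}}(\bar{x}) = \epsilon\, g_{\hat{\mu}\hat{\nu}}(\phi_\epsilon(\bar{x}))$, where $\phi_\epsilon(\bar{x})$ has normal coordinates $\sqrt{\epsilon}\bar{x}$. Substituting this gives
\begin{equation}
\psi_*\tilde{g}_\epsilon - \epsilon G_0 = \epsilon(1-f)\big(g_{\hat{\mu}\hat{\nu}}\circ\phi_\epsilon - g_{\hat{\mu}\hat{\nu}}(p)\big).
\end{equation}
Because $g\in W^{2,p}_\loc$ with $p>n/2$, Morrey's embedding makes $g_{\hat{\mu}\hat{\nu}}$ $\alpha$-Hölder on a fixed compact neighbourhood $\hat{B}_{\hat{\epsilon}'}(p)\subset\Omega$. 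Since $|\bar{x}|\le 1$, this yields
\begin{equation}
|g_{\hat{\mu}\hat{\nu}}(\phi_\epsilon(\bar{x}))-g_{\hat{\mu}\hat{\nu}}(p)|\le [g]_{C^{0,\alpha}}\,\epsilon^{\alpha/2}.
\end{equation}
The remainder is therefore $O(\epsilon^{1+\alpha/2})$ uniformly, and the constant depends only on $g$, $\hat{g}$ and $\Omega$. That settles both the nondegeneracy and the asymptotic expansion.

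It remains to show $W^{2,p}$ membership for each fixed small $\epsilon$. Here $f$ is smooth and $\tilde{g}$ lies in $W^{2,p}$ on the compact transition region by the hypothesis on $G_0$, since $f\psi_*\tilde{g} = G_0 - (1-f)g(p)$. The composition $g_{\hat{\mu}\hat{\nu}}\circ\phi_\epsilon$ lies in $W^{2,p}(\B)$ by the chain-rule scaling used in Lemma \ref{Contraction Lemma}, because $\phi_\epsilon$ is linear in normal coordinates. Products of smooth functions with $W^{2,p}$ functions stay in $W^{2,p}$, and $\psi$ is a diffeomorphism between compact collars, so pulling back preserves the class. Together with the signature conclusion this gives $\tilde{g}_\epsilon\in W^{2,p}(\Lz\,\tilde{W}\setminus\tilde{U})$.

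I expect the main obstacle to be bookkeeping rather than an estimate. The components $(\psi_*\tilde{g})_{\bar{\mu}\bar{\nu}}$ are written in $\bar{x}$ coordinates, while $g_{\hat{\mu}\hat{\nu}}(p)$ is written in normal coordinates. These can be added only because $\phi_\epsilon$ identifies $\bar{x}^\mu$ with $\epsilon^{-1/2}\hat{x}^\mu$, so the coordinate frames agree up to the factor already absorbed in \eqref{scaling metric}; this identification has to be stated explicitly. A second point is that the openness argument needs a uniform lower bound on $|\det G_0|$ over the compact closure. Continuity of $G_0$ (again by Morrey) together with nondegeneracy gives this bound.
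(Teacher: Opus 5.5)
Your proposal is correct and follows the paper's proof. Both substitute the scaling identity $(\phi_\epsilon^*g)_{\bar\mu\bar\nu}=\epsilon\, g_{\hat\mu\hat\nu}\circ\phi_\epsilon$ into the interpolated metric, and both use the Morrey--H\"older bound $|g_{\hat\mu\hat\nu}\circ\phi_\epsilon(\bar x)-g_{\hat\mu\hat\nu}(p)|\le C\epsilon^{\alpha/2}|\bar x|^\alpha$ to obtain the $O(\epsilon^{1+\alpha/2})$ remainder and hence that the signature is preserved for small $\epsilon$. You also fill in points the paper leaves implicit: the uniform openness of the signature condition on the compact closure of the transition region, and the $W^{2,p}$ membership of each $\tilde g_\epsilon$.
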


\begin{proof}
It is more convenient to work in $\W\setminus\U$. Note that due to \eqref{scaling metric}, we have that
\begin{equation}
\begin{aligned}
(\psi_*\tilde{g}_\epsilon)_{\bar{\mu}\bar{\nu}}(\bar{x}) &= \epsilon f (\psi_* \tilde{g})_{\bar{\mu}\bar{\nu}}(\bar{x}) + (1-f)(\phi_\epsilon^* g)_{\bar{\mu}\bar{\nu}}(\bar{x}) \\
	&= \epsilon (f(\psi_* \tilde{g})_{\bar{\mu}\bar{\nu}}(\bar{x}) +(1-f) g_{\hat{\mu}\hat{\nu}}\circ \phi_\epsilon(\bar{x})).
\end{aligned}
\end{equation}
In view of the embedding $W^{2,p}_\loc(M) \hookrightarrow C^{0,\alpha}_\loc(M)$, it follows that
\begin{equation}
|g_{\hat{\mu}\hat{\nu}}(q)- g_{\hat{\mu}\hat{\nu}}(p)| \leq C |\hat{x}(q)-\hat{x}(p)|^\alpha
\end{equation}
in $\Omega$. Setting $q=\phi_\epsilon(\bar{x})$ and noting that $p=\phi_\epsilon(0)$ and $\hat{x}^\mu \circ \phi_\epsilon(\bar{x}) = \epsilon^{1/2} \bar{x}^\mu$, we see that
\begin{equation}
|g_{\hat{\mu}\hat{\nu}}\circ\phi_\epsilon(\bar{x}) - g_{\hat{\mu}\hat{\nu}}(p) | \leq C \epsilon^{\alpha/2} |\bar{x}|^\alpha,
\end{equation}
by which we infer \eqref{transition approximate scaling}. Since $\alpha>0$, it is clear that the last term becomes negligible for small $\epsilon$, and the claim of unchanging signature also follows.
\end{proof}
To summarize, we have equipped $\tilde{B}$ with a metric 
\begin{equation}\label{general connected infinitesimal metric}
\tilde{g}_\epsilon =  \begin{dcases}
	\psi^* \bar{g}_\epsilon & \text{on } \tilde{U} \\
  	\epsilon f \tilde{g} + (1-f) \psi^*\bar{g}_\epsilon & \text{on } \tilde{W}\setminus\tilde{U} \\
  	\epsilon\tilde{g} & \text{on } \tilde{B}\setminus\tilde{W}
  \end{dcases},
\end{equation}
which is an element of $D_\EH(\tilde{B})$, and is such that the new topology in the central region scales by a factor of $\epsilon$ and the outer collar matches the geometry of shrinking balls in the background manifold $(M,g)$. In view of formulas \eqref{scaling metric} and \eqref{transition approximate scaling}, it is clear that the whole construction behaves as an approximate scaling by $\epsilon$, i.e. the leading order terms are $O(\epsilon)$. This additionally shows that the generic setup, despite technical complications, closely resembles the flat case. The gluing can now take place without problems. The general version of the infinitesimal topological variation is given by
\begin{align}
\begin{split}\label{general connected infinitesimal variation}
M &\longrightarrow (M\setminus \hat{B}_{\sqrt{\epsilon}}(p))\sqcup \tilde{B} / \phi_\epsilon\circ\psi \\
g &\longrightarrow g|_{M\setminus \hat{B}_{\sqrt{\epsilon}}(p)} \oplus \tilde{g}_\epsilon \\
\Omega &\longrightarrow (\Omega\setminus B_{\sqrt{\epsilon}}(p))\sqcup \tilde{B} / \phi_\epsilon\circ\psi
\end{split}.
\end{align}
Note that this depends on all introduced features, including not only the new metric $\tilde{g}$, but also the auxiliary metric $\hat{g}$ (which determines the shrinking balls), the bump function $f$ (which affects the transition between the central and outer region of the variation), as well as the choice of transition regions $\tilde{W}$ and $\tilde{U}$ and the diffeomorphism $\psi$.

\hfill

\paragraph{\textbf{Topology on $\D_\EH$.}} The topology of $\D_\EH$ can be adjusted in the spirit of disconnected topological variations, by extending the disjoint union topology in a way that forces deformation maps to be continuous. To give a rough outline of this, we remark that the main difference now is that the topology should be allowed to vary ``infinitesimally'' inside $\Omega$ as in \eqref{general connected infinitesimal variation} around a finite number of points. The metric can also be varied as per usual; note however that now extra care is warranted as this interferes with the gluing. This contains as a special case the variation of metric alone without the addition of new topology. Note that disconnected topological variations could also be included in this in a unified scheme, but since they have already been covered in depth in the previous section, we will refrain from needlessly carrying around terms related to them.

To make things precise, let us introduce some notions. 
\begin{definition}
Let $(M,g;\Omega)\in \D_\EH$ be a variational configuration, and let $\hat{g}$ be an auxiliary smooth Riemannian metric on $M$. A \textit{$k$-template $\Theta$ subordinate to $\hat{g}$} is a finite set consisting of the following surgery data:
\begin{itemize}
\item a finite set $\{p_i\}_{i=1}^k\subset \Omega$ of $k$ points,
\item  a finite set $\{ (\tilde{B}_i,\tilde{g}_i) \}_{i=1}^k$ of $k$ manifolds $\tilde{B}_i$ with boundaries $\partial \tilde{B}_i$ diffeomorphic to $\partial \B$, and metrics $\tilde{g}_i \in D_\EH(\tilde{B}_i)$,
\item a finite set $\{ (\tilde{U}_i, \tilde{W}_i, \psi_i) \}_{i=1}^k$ of $k$ nested collar pairs $\tilde{U}_i$ and $\tilde{W}_i$ with $\partial \tilde{B}_i \subset \tilde{U}_i \subset \tilde{W}_i$ and diffeomorphisms 
\begin{equation}
\psi_i: \tilde{W}_i \rightarrow \B\setminus B(0,\epsilon_{1i}), \quad i=1,\ldots,k,
\end{equation}
which restrict to diffeomorphisms $\psi_i:\tilde{U}_i \rightarrow \B\setminus B(0,\epsilon_{2i})$ where $0<\epsilon_{1i}<\epsilon_{2i}<1$, and also satisfy $\psi_i(\partial \tilde{B}_i) = \partial \B$.
\item a finite set $\{ f_i: \tilde{B}_i \rightarrow [0,1] \}_{i=1}^k$ of $k$ smooth cutoff functions such that $f_i = 1$ in $\tilde{B}_i\setminus \tilde{W}_i$ and $\supp f_i \subset \tilde{B}_i\setminus\tilde{U}_i$.
\end{itemize}
The pair $(\Theta,\hat{g})$ will be called a \textit{$k$-template}, or just \textit{template} if the number of points is irrelevant.
\end{definition}

\noindent
A template $(\Theta,\hat{g})$ contains all the data required to perform the surgery according to the instructions \eqref{general connected infinitesimal variation} around a finite number of points $p_i$ simultaneously, each with an independent deformation parameter $\epsilon_i$. We do not extend the notion to an infinite number of points as this might destroy compactness; remember that we are interested in variations with compact support. Note that since the points are finitely many, there exists an $\epsilon_0=\epsilon_0(\Omega,\Theta,\hat{g})>0$ such that
\begin{equation}
\hat{B}_{\sqrt{\epsilon_0}}(p_i) \subset\subset \Omega \text{ for } i=1,\ldots,k, \text{ and } \hat{B}_{\sqrt{\epsilon_0}}(p_i) \cap \hat{B}_{\sqrt{\epsilon_0}}(p_j) = \varnothing \text{ for } i\neq j,
\end{equation}
and so the surgery is performed without problems for any value of the deformation parameter $k$-vector $\epsilon=(\epsilon_1,\ldots,\epsilon_k)\in [0,\epsilon_0)^k$. As mentioned earlier, for an arbitrary choice of the $\tilde{g}_i$ the resulting manifold may have a degenerate metric, a situation we would like to avoid. 
\begin{definition}
The templates $(\Theta,\hat{g})$ that specify a surgery pattern in which the resulting metrics $\widetilde{g_i}_{\epsilon_i}$, given by \eqref{general connected infinitesimal metric} for each of the $\tilde{B}_i$, have the same signature as $g$ for all suitably small values of the deformation parameter vector $\epsilon\in[0,\epsilon_0)^k$ will be called \textit{signature preserving}.
\end{definition}
\noindent
Note that in view of Lemma \ref{linear interpolation lemma}, many such templates exist and are easy to construct for $\epsilon_0$ small enough.

\begin{definition}
Given a variational configuration $(M,g;\Omega)\in \D_\EH$ and a signature preserving $k$-template $(\Theta,\hat{g})$, we may define the \textit{special deformation map}
\begin{equation}\label{special deformation map}
\Phi_{(M,g;\Omega,\Theta,\hat{g})}: [0,\epsilon_0)^k \rightarrow \D_\EH,\quad \Phi_{(M,g;\Omega,\Theta,\hat{g})}(\epsilon) = (M_\epsilon,g_\epsilon;\Omega_\epsilon),
\end{equation}
where
\begin{equation}
M_\epsilon = \Big( M\setminus \bigcup_{i\in I(\epsilon)} \hat{B}_{\sqrt{\epsilon_i}}(p_i) \Big) \sqcup \Big( \bigsqcup_{i\in I(\epsilon)} \tilde{B}_i \Big) / \{ \phi_{\epsilon_i}\circ\psi_i : i\in I(\epsilon) \},
\end{equation}
\begin{equation}
g_\epsilon = g|_{M\setminus \bigcup_{i\in I(\epsilon)} \hat{B}_{\sqrt{\epsilon_i}}(p_i)} \oplus \bigoplus_{i\in I(\epsilon)} \tilde{g}_{\epsilon_i},
\end{equation}
\begin{equation}
\Omega_\epsilon = \Big( \Omega\setminus \bigcup_{i\in I(\epsilon)} \hat{B}_{\sqrt{\epsilon_i}}(p_i) \Big) \sqcup \Big( \bigsqcup_{i\in I(\epsilon)} \tilde{B}_i \Big) / \{ \phi_{\epsilon_i}\circ\psi_i : i\in I(\epsilon) \},
\end{equation}
where $I(\epsilon) = \{ i\in \{1,\ldots,k\}: \epsilon_i \neq 0 \}$, performing the surgery as specified above according to the value of the deformation parameters $\epsilon_i,\ i=1,\ldots, k$.
\end{definition}

Note that $\Phi_{(M,g;\Omega,\Theta,\hat{g})}(0,\ldots,0) = (M,g;\Omega)$, so the map indeed describes a deformation of the initial variational configuration. Special deformation maps describe pure connected topological variations. As with the disconnected case, we also want to include in this setup mixed variations of both the geometry and topology. With disconnected topological variations this was easy as there was a clear separation between the two; the supports of the two variations belonged to different manifolds. Here the situation is more complicated, but progress is still possible. If we wish to also vary the metric away from the points $p_i,\ i=1,\ldots,k,$ and with compact support in $\Omega$, we may proceed as usual by considering a perturbation $g\rightarrow g+h$, where $h\in W^{2,p}_c(\operatorname{Sym} \Omega)$. Because of the embedding $W^{2,p}_\loc(M)\hookrightarrow C^{0,\alpha}_\loc(M)$, an $h$ that has small Sobolev norm will also be pointwise small in the usual sense, so there exists $\epsilon_0'>0$ such that 
\begin{equation}\label{signature preserving condition}
\| h \|_{W^{2,p}(\Omega,\hat{g})} < \epsilon_0' \quad \Rightarrow \quad g+h \in D_\EH(M).
\end{equation}
It follows that if $(\Theta,\hat{g})$ is signature preserving with respect to $g$ and the Sobolev norm of $h$ is small enough, the deformation map $\Phi_{(M,g+h;\Omega,\Theta,\hat{g})}$ is still signature preserving, possibly for a smaller value of $\epsilon_0$. This means that there exists a neighborhood $\mathcal{U}(M,g;\Omega,\Theta,\hat{g})$ of $0\in W^{2,p}_c(\operatorname{Sym} \Omega)$ such that the map
\begin{equation}\label{generic deformation map}
\Phi_{(M,g;\Omega,\Theta,\hat{g})}: \mathcal{U}(M,g;\Omega,\Theta,\hat{g}) \times [0,\epsilon_0)^k \rightarrow \D_\EH,\quad (h,\epsilon)\mapsto \Phi_{(M,g+h;\Omega,\Theta,\hat{g})}(\epsilon)
\end{equation}
is well-defined and signature preserving. Note that purely geometric variations, of the metric alone,  are recovered as the special case $\Theta = \varnothing$.

\begin{definition}
\noindent
\begin{enumerate}
\item Any map of the form \eqref{generic deformation map} will be called a \textit{generic deformation map}. We denote by $\mathcal{F}(M,g;\Omega)$ the family of all signature preserving generic deformation maps for $(M,g;\Omega) \in \D_\EH$, and also their union for all variational configurations
\begin{equation}
\mathcal{F}:= \bigcup \{ \mathcal{F}(M,g;\Omega) : (M,g;\Omega) \in \D_\EH\}.
\end{equation}
\item Let $\tau_2$ denote the final topology of $\D_\EH$ with respect to the family of signature preserving generic deformation maps $\mathcal{F}$, i.e.
\begin{equation}
\mathfrak{U}\subset \D_\EH \text{ is open \textit{if and only if} } \Phi^{-1}(\mathfrak{U}) \text{ is open for all } \Phi\in \mathcal{F}. 
\end{equation}
\end{enumerate}
\end{definition}

\begin{theorem}\label{Connected topological continuity thm}
Let $(M,g;\Omega)\in \D_\EH$ be a variational configuration, and $(\Theta,\hat{g})$ a signature preserving $k$-template. If $\Phi_{(M,g;\Omega,\Theta,\hat{g})}: \mathcal{U}(M,g;\Omega,\Theta,\hat{g}) \times [0,\epsilon_0)^k \rightarrow \D_\EH$ is the associated generic deformation map, then the function 
\begin{equation}\label{EH deformation composition}
S_\EH \circ \Phi_{(M,g;\Omega,\Theta,\hat{g})}: \mathcal{U}(M,g;\Omega,\Theta,\hat{g}) \times [0,\epsilon_0)^k \rightarrow \R
\end{equation}
is continuous if and only if $n>2$, or $n=2$ and 
\begin{equation}\label{n=2 condition}
\int_{\tilde{B}_i\setminus \tilde{W}_i} \mathcal{L}[\tilde{g}_i] + \int_{\tilde{W}_i\setminus \tilde{U}_i} \mathcal{L}[\psi_i^*\breve{g}_i] = 0 \quad \forall i=1,\ldots,k,
\end{equation}
where $(\breve{g}_i)_{\bar{\mu}\bar{\nu}}(\bar{x}) = f_i(\psi_{i*} \tilde{g}_i)_{\bar{\mu}\bar{\nu}}(\bar{x}) + (1-f_i)g_{\hat{\mu}\hat{\nu}}(p_i)$. In particular, $S_\EH:(\D_\EH, \tau_2)\rightarrow \R$ is continuous if and only if $n>2$.
\end{theorem}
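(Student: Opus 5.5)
Write $S(h,\epsilon):=S_\EH\circ\Phi_{(M,g;\Omega,\Theta,\hat{g})}(h,\epsilon)$. Since the balls $\hat{B}_{\sqrt{\epsilon_0}}(p_i)$ are disjoint and compactly contained in $\Omega$, the action splits additively. There is one integral over $\Omega\setminus\bigcup_{i\in I(\epsilon)}\hat{B}_{\sqrt{\epsilon_i}}(p_i)$ of $\mathcal{L}[g+h]$. For each $i\in I(\epsilon)$ there is one integral over $\tilde{B}_i$ of $\mathcal{L}[\widetilde{(g+h)}_{i,\epsilon_i}]$, decomposed into three pieces over $\tilde{U}_i$, $\tilde{W}_i\setminus\tilde{U}_i$ and $\tilde{B}_i\setminus\tilde{W}_i$. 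On $\tilde{U}_i$ the metric is $\psi_i^*\phi_{\epsilon_i}^*(g+h)$, so by diffeomorphism invariance this piece equals $\int_{\phi_{\epsilon_i}(\psi_i(\tilde U_i))}\mathcal{L}[g+h]$, the integral over the annulus $\hat{B}_{\sqrt{\epsilon_i}}(p_i)\setminus\hat{B}_{\epsilon_{2i}\sqrt{\epsilon_i}}(p_i)$. Hence
\begin{equation*}
S(h,\epsilon)=\int_{\Omega}\mathcal{L}[g+h]-\sum_{i\in I(\epsilon)}\int_{\hat{B}_{\epsilon_{2i}\sqrt{\epsilon_i}}(p_i)}\mathcal{L}[g+h]+\sum_{i\in I(\epsilon)}\Big(\epsilon_i^{\frac{n-2}{2}}\int_{\tilde{B}_i\setminus\tilde{W}_i}\mathcal{L}[\tilde{g}_i]+\int_{\tilde{W}_i\setminus\tilde{U}_i}\mathcal{L}[\tilde{g}_{i,\epsilon_i}]\Big),
\end{equation*}
where Lemma \ref{Scaling Lemma} was used on the central piece.

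The first term is continuous in $h$ by Theorem \ref{Geometric continuity thm}. The second term tends to $0$ as $\epsilon_i\to0$, uniformly for $h$ near $h_*$. This follows from absolute continuity of the integral, using the uniform local $L^1$ bound together with the $L^1$ convergence $\mathcal{L}[g+h]\to\mathcal{L}[g+h_*]$ from Theorem \ref{Geometric continuity thm}. For $\epsilon_i$ bounded away from $0$, continuity of all terms in $(h,\epsilon_i)$ follows from Lemma \ref{Contraction Lemma}, extended to jointly continuous dependence on $(h,\epsilon)$. That extension is the same proof, since $h\mapsto\phi_\epsilon^*h$ is linear and bounded. Theorem \ref{Geometric continuity thm}, applied on the fixed manifold $\tilde{B}_i$, then gives continuity of the collar and transition integrals.

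The key step is the transition term as $\epsilon_i\to0^+$. Write $\psi_{i*}\tilde{g}_{i,\epsilon_i}=\epsilon_i(\breve{g}_i+r_{\epsilon_i})$ with $r_\epsilon=(1-f_i)\big((g+h)_{\hat\mu\hat\nu}\circ\phi_\epsilon-(g+h)_{\hat\mu\hat\nu}(p_i)\big)$. Lemma \ref{Scaling Lemma} then gives $\int\mathcal{L}[\tilde g_{i,\epsilon}]=\epsilon^{\frac{n-2}{2}}\int_{\tilde{W}_i\setminus\tilde{U}_i}\mathcal{L}[\psi_i^*(\breve g_i+r_\epsilon)]$. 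I would show $r_\epsilon\to0$ in $W^{2,p}(\W\setminus\U)$, not merely uniformly as in Lemma \ref{linear interpolation lemma}. This is where care is needed. The $C^0$ part is $O(\epsilon^{\alpha/2})$ by Morrey. The derivatives are, by the computation in Lemma \ref{Contraction Lemma}, $\epsilon^{|\beta|/2}\partial_\beta g\circ\phi_\epsilon$ times bounded smooth factors coming from $f_i$. Their $L^p$ norms on the fixed annulus equal $\epsilon^{|\beta|/2-n/(2p)}\|\partial_\beta g\|_{L^p(\hat B_{\sqrt\epsilon})}$. For $|\beta|=2$ this is $o(1)$, because $p>n/2$ and the $L^p$ norm over a shrinking ball tends to $0$; for $|\beta|=1$ the same holds by Sobolev embedding $W^{2,p}\hookrightarrow W^{1,q}$ with $q>n$ where $p<n$, or directly otherwise. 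All bounds are uniform in $h$ near $h_*$. By Theorem \ref{Geometric continuity thm}, the transition integral is therefore $\epsilon^{\frac{n-2}{2}}\big(\int\mathcal{L}[\psi_i^*\breve g_i]+o(1)\big)$. The $p_i$-dependence of $\breve g_i$ through $(g+h)(p_i)$ is harmless because $h\mapsto h(p_i)$ is continuous by Morrey.

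Collecting terms, $S(h,\epsilon)-S(h_*,\epsilon_*)\to0$ as $\epsilon_i\to0^+$ whenever $n>2$, since every new contribution carries the factor $\epsilon_i^{\frac{n-2}{2}}\to0$. When $n=2$ the jump at $\epsilon_i=0^+$ is exactly the left-hand side of \eqref{n=2 condition}. Conversely, if that quantity is nonzero, the function is discontinuous along $(0,\epsilon_i)$, which gives the ``only if'' direction. For the final statement I would use the fact that a functional is continuous on a final topology if and only if its composition with every generating map is continuous. For $n=2$, a template violating \eqref{n=2 condition} exists, for instance by taking $\tilde g_i$ with nonzero total curvature on the core, which exhibits discontinuity.
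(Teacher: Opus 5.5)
Your proof is correct, and it takes a genuinely different route from the paper's, more rigorous in two places.

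\textbf{What the paper does.} It keeps four separate pieces $I_0,\dots,I_3$. The background and collar pieces are handled by dominated convergence on symmetric differences of shrinking balls and annuli. The transition region as $\epsilon_i\to0^+$ is handled by a pointwise expansion $R[\psi_{i*}\widetilde{g_*}_{\epsilon_i}]=\epsilon_i^{-1}R[\breve g]+o(\epsilon_i^{-1})$. It also checks only continuity in $h$ for fixed $\epsilon$ and in $\epsilon$ for fixed $h$.

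\textbf{What you do differently.} First, you absorb the collar into the background integral via $\phi_{\epsilon_i}(\psi_i(\tilde U_i))=\hat B_{\sqrt{\epsilon_i}}(p_i)\setminus\hat B_{\epsilon_{2i}\sqrt{\epsilon_i}}(p_i)$, so only inner balls remain to control. Second, you factor out $\epsilon_i$ exactly with Lemma \ref{Scaling Lemma}. You then prove $r_\epsilon\to0$ in $W^{2,p}$ on the fixed annulus, using the scaling $\epsilon^{|\beta|/2-n/(2p)}\|\partial_\beta g\|_{L^p(\hat B_{\sqrt\epsilon}(p_i))}$ and, for $p<n$, the embedding into $W^{1,q}$ with $q>n$. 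Only then do you invoke Theorem \ref{Geometric continuity thm}. This is what the step actually requires: the error terms contain $\partial^2 g\circ\phi_\epsilon$, which is only in $L^p$. So the paper's pointwise $o(\cdot)$ claim holds only in this integrated sense, and your version shows exactly where $p>n/2$ enters. Third, your estimates are uniform in $h$ near $h_*$. This gives the joint continuity that the product topology on $\mathcal{U}\times[0,\epsilon_0)^k$ requires; separate continuity in each variable does not imply it. What the paper's bookkeeping buys is an explicit four-term split that it reuses verbatim in the derivative computation of Theorem \ref{connected EH theorem}.

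\textbf{One correction for $n=2$.} Taking $\tilde g_i$ with nonzero total curvature on the core does not by itself violate \eqref{n=2 condition}. In Riemannian signature, the metric on $\tilde B_i\setminus\tilde U_i$ is, near its boundary circle, the pullback of the constant flat metric $g(p_i)$. That circle therefore has total geodesic curvature $2\pi$. Gauss--Bonnet then shows the left-hand side of \eqref{n=2 condition} equals $4\pi(\chi(\tilde B_i)-1)$. For a disk, the transition term exactly cancels whatever curvature the core carries. A valid witness is $\tilde B_i$ a torus minus a disk, which gives $-8\pi$.

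The same computation shows this quantity does not depend on the constant matrix $(g+h)(p_i)$. You need that fact, and the paper uses it tacitly, for the $n=2$ ``if'' direction at base points $(h_*,\epsilon_*)$ with $h_*(p_i)\neq0$. There the jump is computed with $(g+h_*)(p_i)$ rather than $g(p_i)$.
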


\begin{remark}
Condition \eqref{n=2 condition} is required only when $n=2$ to ensure continuity, and geometrically corresponds to the vanishing of the mean scalar curvature of topological components in the limit $\epsilon_i\rightarrow 0$ for each $i\in \{1,\ldots,k\}$.
\end{remark}

\begin{proof}
Let us denote $S(h,\epsilon):= S_\EH \circ \Phi_{(M,g;\Omega,\Theta,\hat{g})} (h,\epsilon)$, and let $(h_*,\epsilon_*)\in \mathcal{U}(M,g;\Omega,\Theta,\hat{g}) \times [0,\epsilon_0)^k$. To show that $S$ is continuous at $(h_*,\epsilon_*)$, we need to show that it is continuous with respect to each of the variables separately. First, observe that
\begin{equation}\label{action split}
\begin{aligned}
S(h,\epsilon) &= \int_{\Omega \setminus \bigcup_{i\in I(\epsilon)} \hat{B}_{\sqrt{\epsilon_i}}(p_i)} \mathcal{L}[g+h] \\
	&\hspace{0.5cm}+ \sum_{i\in I(\epsilon)} \int_{\tilde{B}_i} \mathcal{L}[\widetilde{g+h}_{\epsilon_i}] \\
	&= \int_{\Omega \setminus \bigcup_{i\in I(\epsilon)} \hat{B}_{\sqrt{\epsilon_i}}(p_i)} \mathcal{L}[g+h] &\quad =:I_0(h,\epsilon)\\
	&\hspace{0.5cm} + \sum_{i\in I(\epsilon)} \int_{\tilde{U}_i} \mathcal{L}[\psi_i^* \overline{g+h}_{\epsilon_i}] &\quad =:I_1(h,\epsilon)\\
	&\hspace{1cm} +\sum_{i\in I(\epsilon)} \int_{\tilde{W}_i\setminus \tilde{U}_i} \mathcal{L}[\epsilon_i f_i \tilde{g}_i + (1-f_i) \psi_i^* \overline{g+h}_{\epsilon_i}] &\quad =:I_2(h,\epsilon) \\
	&\hspace{1.5cm} +\sum_{i\in I(\epsilon)} \int_{\tilde{B}_i\setminus\tilde{W}_i} \mathcal{L}[\epsilon_i \tilde{g}_i] &\quad =:I_3(h,\epsilon)
\end{aligned}.
\end{equation}

\paragraph{\textbf{Case 1} (Continuity with respect to geometric perturbations and fixed deformation parameters).} Fix $\epsilon = \epsilon_*$. Then the surgery pattern and corresponding post-surgery manifold $M_*$ and domain $\Omega_*$ are specified by $\Theta$, $\hat{g}$ and $\epsilon_*$. The maps $h\mapsto g+h$, $h\mapsto \psi_i^* \overline{g+h}_{\epsilon_{*i}}$ and $h\mapsto \epsilon_{*i} f_i \tilde{g}_i + (1-f_i) \psi_i^* \overline{g+h}_{\epsilon_{*i}}$ are all continuous from $\mathcal{U}(M,g;\Omega,\Theta,\hat{g}) \subset W^{2,p}_c(\operatorname{Sym} \Omega)$ to $W^{2,p}(\Lz \Omega_*)$ for each $i=1,\ldots,k$. In view of Theorem \ref{Geometric continuity thm} we deduce that
\begin{equation}
I_j(h,\epsilon_*)\rightarrow I_j(h_*,\epsilon_*)\quad \text{as}\quad h\rightarrow h_* \quad \text{in}\quad \mathcal{U}(M,g;\Omega,\Theta,\hat{g})
\end{equation}
for each $j=0,1,2$, and $I_3(h,\epsilon)$ is independent of $h$. It follows that 
\begin{equation}
S(h,\epsilon_*)\rightarrow S(h_*,\epsilon_*)\quad \text{as}\quad h\rightarrow h_* \quad \text{in}\quad \mathcal{U}(M,g;\Omega,\Theta,\hat{g}).
\end{equation}

\paragraph{\textbf{Case 2} (Continuity with respect to deformation parameters and fixed background geometry).} Now fix $h=h_*$, and set $g_*=g+h_*$. To demonstrate that $S(h_*,\epsilon) \rightarrow S(h_*,\epsilon_*)$ as $\epsilon\rightarrow \epsilon_*$, it suffices to show that $I_j(h_*,\epsilon) \rightarrow I_j(h_*,\epsilon_*)$ as $\epsilon \rightarrow \epsilon_*$ for each $j=0,1,2,3$. For $j=0$, we have that
\begin{equation}
\sum_{i\in I(\epsilon)\cap I(\epsilon_*)} \bigg| \int_{\hat{B}_{\sqrt{\epsilon_i}}(p_i)} \mathcal{L}[g_*] - \int_{\hat{B}_{\sqrt{\epsilon_{*i}}}(p_i)} \mathcal{L}[g_*] \, \bigg| \leq \sum_{i\in I(\epsilon)\cap I(\epsilon_*)} \int_\Omega | \mathcal{L}[g_*] | \chi_{\hat{B}_{\sqrt{\epsilon_i}}(p_i) \bigtriangleup \hat{B}_{\sqrt{\epsilon_{*i}}}(p_i)},
\end{equation}
and likewise
\begin{equation}
\sum_{i\in I(\epsilon)\setminus I(\epsilon_*)} \bigg| \int_{\hat{B}_{\sqrt{\epsilon_i}}(p_i)} \mathcal{L}[g_*] \, \bigg| \leq \sum_{i\in I(\epsilon)\setminus I(\epsilon_*)} \int_\Omega | \mathcal{L}[g_*] | \chi_{\hat{B}_{\sqrt{\epsilon_i}}(p_i)},
\end{equation}
where $\chi_A$ stands for the characteristic function of a set $A$ and $A \bigtriangleup B = (A\setminus B) \cup (B\setminus A)$ stands for the symmetric difference of two sets $A$ and $B$. Since 
\begin{equation}
\lim_{\epsilon\rightarrow\epsilon_*} \chi_{\hat{B}_{\sqrt{\epsilon_i}}(p_i) \bigtriangleup \hat{B}_{\sqrt{\epsilon_{*i}}}(p_i)} = 0 \quad a.e \quad \text{for } i\in I(\epsilon)\cap I(\epsilon_*)
\end{equation}
and 
\begin{equation}
 \lim_{\epsilon\rightarrow\epsilon_*} \chi_{\hat{B}_{\sqrt{\epsilon_i}}(p_i)} = 0 \quad a.e. \quad \text{for } i\in I(\epsilon)\setminus I(\epsilon_*)
\end{equation}
in $\Omega$, it follows by the dominated convergence theorem that 
\begin{equation}
\lim_{\epsilon\rightarrow\epsilon_*} |I_0(h_*,\epsilon)-I_0(h_*,\epsilon_*)| = 0.
\end{equation}

For $j=1$, the situation is quite similar, as the metric in the outer collars $\tilde{U}_i$ is still a pullback of $g_*$. In particular, we have that
\begin{equation}
\begin{aligned}
&\sum_{i\in I(\epsilon)\cap I(\epsilon_*)} \bigg| \int_{\tilde{U}_i} \mathcal{L}[\psi_i^* \overline{g_*}_{\epsilon_i}] - \int_{\tilde{U}_i} \mathcal{L}[\psi_i^* \overline{g_*}_{\epsilon_{*i}}]\, \bigg| \\
	&\hspace{3cm} = \sum_{i\in I(\epsilon)\cap I(\epsilon_*)} \bigg| \int_{\phi_{\epsilon_i}(\U_i)} \mathcal{L}[g_*] - \int_{\phi_{\epsilon_{*i}}(\U_i)} \mathcal{L}[g_*]\, \bigg| \\
	&\hspace{6cm} \leq \sum_{i\in I(\epsilon)\cap I(\epsilon_*)} \int_\Omega |\mathcal{L}[g_*]| \chi_{\phi_{\epsilon_i}(\U_i) \bigtriangleup \phi_{\epsilon_{*i}}(\U_i)},
\end{aligned}
\end{equation}
where $\U_i := \psi_i(\tilde{U}_i) = \B\setminus B(0,\epsilon_{2i})$ is the outer collar in $\B$ for each $i\in I(\epsilon)\cap I(\epsilon_*)$. Since each $\U_i$ is an annulus centered at $0\in \B$, there easily follows that 
\begin{equation}
\lim_{\epsilon\rightarrow \epsilon_*} \chi_{\phi_{\epsilon_i}(\U_i) \bigtriangleup \phi_{\epsilon_{*i}}(\U_i)} = 0 \quad a.e. \quad \text{for } i\in I(\epsilon)\cap I(\epsilon_*)
\end{equation}
in $\Omega$, and the sum above converges to zero in view of the dominated convergence theorem. The sum over $i \in I(\epsilon)\setminus I(\epsilon_*)$ is similar. This proves that
\begin{equation}
\lim_{\epsilon\rightarrow\epsilon_*} |I_1(h_*,\epsilon)-I_1(h_*,\epsilon_*)| = 0.
\end{equation}

It remains to examine convergence of $I_2(h_*,\epsilon)$ and $I_3(h_*,\epsilon)$. Note that if $\epsilon_{*i}\neq 0$, the maps $\epsilon_i \mapsto \epsilon_i \tilde{g}_i$ and $\epsilon_i\mapsto \epsilon_i f_i \tilde{g}_i + (1-f_i) \psi_i^* \overline{g_*}_{\epsilon_i}$ are continuous from $(0,\epsilon_0)$ to $W^{2,p}(\Lz \tilde{B}_i)$ as $\epsilon_i \rightarrow \epsilon_{*i}$ in view of Lemma \ref{Contraction Lemma}, so by Theorem \ref{Geometric continuity thm} it follows that 
\begin{equation}
\lim_{\epsilon\rightarrow\epsilon_*} \sum_{i\in I(\epsilon)\cap I(\epsilon_*)} \bigg| \int_{\tilde{W}_i\setminus\tilde{U}_i} \mathcal{L}[\epsilon_i f_i \tilde{g}_i +(1-f_i) \psi_i^* \overline{g_*}_{\epsilon_i}] - \int_{\tilde{W}_i\setminus\tilde{U}_i} \mathcal{L}[\epsilon_{*i} f_i \tilde{g}_i +(1-f_i) \psi_i^* \overline{g_*}_{\epsilon_{*i}}] \,\bigg| =0
\end{equation}
and 
\begin{equation}
\lim_{\epsilon\rightarrow\epsilon_*} \sum_{i\in I(\epsilon)\cap I(\epsilon_*)} \bigg| \int_{\tilde{B}_i\setminus\tilde{W}_i} \mathcal{L}[\epsilon_i \tilde{g}_i] - \int_{\tilde{B}_i\setminus\tilde{W}_i} \mathcal{L}[\epsilon_{*i} \tilde{g}_i] \,\bigg| =0.
\end{equation}

It should be stressed that this argument is invalid when $\epsilon_{*i} = 0$, as the (zero) limit of $\epsilon_i \tilde{g}_i$ is no longer an element of $W^{2,p}(\Lz \tilde{B}_i)$ and neither Lemma \ref{Contraction Lemma} nor Theorem \ref{Geometric continuity thm} apply any more. In that case we have to instead compute the integrals by hand. For $j=3$ the calculation is straightforward, as we have already demonstrated the scaling property
\begin{equation}
\mathcal{L}[\epsilon_i \tilde{g}_i] = \epsilon_i^\frac{n-2}{2} \mathcal{L}[\tilde{g}_i]
\end{equation}
in Lemma \ref{Scaling Lemma}, which implies $\mathcal{L}[\epsilon_i \tilde{g}_i] \rightarrow 0$ in $L^1(\Lambda^n\tilde{B}_i\setminus \tilde{W}_i)$ if and only if $n>2$ or $n=2$ and $\tilde{g}_i$ has zero mean scalar curvature in $\tilde{B}_i\setminus\tilde{W}_i$, by linearity of the integral. This shows that we also have
\begin{equation}
\lim_{\epsilon\rightarrow\epsilon_*} \sum_{i\in I(\epsilon)\setminus I(\epsilon_*)} \bigg| \int_{\tilde{B}_i\setminus\tilde{W}_i} \mathcal{L}[\epsilon_i \tilde{g}_i] \,\bigg| =0 \quad \text{ if } n>2,
\end{equation}
and consequently
\begin{equation}
\lim_{\epsilon\rightarrow\epsilon_*} |I_3(h_*,\epsilon)-I_3(h_*,\epsilon_*)| = 0 \quad \text{ if } n>2.
\end{equation}

The case $j=2$ is similar but requires a more delicate argument. In Lemma \ref{linear interpolation lemma}, we have shown that the push-forward of the metric $\widetilde{g_*}_{\epsilon_i}$ in the transition region $\W\setminus\U$ looks like
\begin{equation}
(\psi_{i*} \widetilde{g_*}_{\epsilon_i})_{\bar{\mu}\bar{\nu}}(\bar{x}) = \epsilon_i (\breve{g}_{\bar{\mu}\bar{\nu}}(\bar{x}) + \gamma_{\bar{\mu}\bar{\nu}}(\epsilon_i^{1/2}\bar{x})),
\end{equation}
where $\breve{g}_{\bar{\mu}\bar{\nu}}(\bar{x}) = f_i(\psi_{i*} \tilde{g}_i)_{\bar{\mu}\bar{\nu}}(\bar{x}) + (1-f_i)g_{\hat{\mu}\hat{\nu}}(p_i)$ and $|\gamma_{\bar{\mu}\bar{\nu}}(\bar{y})|\leq C |\bar{y}|^\alpha$, by which there follows
\begin{equation}
|(\psi_{i*} \widetilde{g_*}_{\epsilon_i})_{\bar{\mu}\bar{\nu}}(\bar{x}) - \epsilon_i \breve{g}_{\bar{\mu}\bar{\nu}}(\bar{x}) | \leq C \epsilon_i^{1+\alpha/2}|\bar{x}|^\alpha,
\end{equation}
and so 
\begin{equation}
(\psi_{i*} \widetilde{g_*}_{\epsilon_i})_{\bar{\mu}\bar{\nu}}(\bar{x}) = \epsilon_i \breve{g}_{\bar{\mu}\bar{\nu}}(\bar{x}) + o(\epsilon_i),
\end{equation}
By differentiating we also obtain the weak derivatives
\begin{equation}
\partial_{\bar{\rho}} (\psi_{i*} \widetilde{g_*}_{\epsilon_i})_{\bar{\mu}\bar{\nu}}(\bar{x}) = \epsilon_i \partial_{\bar{\rho}} \breve{g}_{\bar{\mu}\bar{\nu}}(\bar{x}) + \epsilon_i^{3/2} \partial_{\bar{\rho}} \gamma_{\bar{\mu}\bar{\nu}}(\epsilon_i^{1/2}\bar{x}),
\end{equation}
and the second weak derivatives
\begin{equation}
\partial_{\bar{\sigma}}\partial_{\bar{\rho}} (\psi_{i*} \widetilde{g_*}_{\epsilon_i})_{\bar{\mu}\bar{\nu}}(\bar{x}) = \epsilon_i \partial_{\bar{\sigma}}\partial_{\bar{\rho}} \breve{g}_{\bar{\mu}\bar{\nu}}(\bar{x}) + \epsilon_i^2 \partial_{\bar{\sigma}}\partial_{\bar{\rho}} \gamma_{\bar{\mu}\bar{\nu}}(\epsilon_i^{1/2}\bar{x}),
\end{equation}
noting that, in all orders, leading contributions are $O(\epsilon_i)$ and depend only on $\breve{g}$. Inserting these into the definition of scalar curvature \eqref{scalar curvature} and expanding in terms of $\epsilon_i$, we conclude that
\begin{equation}
R[\psi_{i*} \widetilde{g_*}_{\epsilon_i}] = \epsilon_i^{-1} R[\breve{g}] + o(\epsilon_i^{-1}),
\end{equation}
while
\begin{equation}
| \det (\psi_{i*} \widetilde{g_*}_{\epsilon_i})_{\bar{\mu}\bar{\nu}})\,|^{1/2} = \epsilon_i^{n/2} |\det\breve{g}_{\bar{\mu}\bar{\nu}}\,|^{1/2} + o(\epsilon_i^{n/2}).
\end{equation}
It follows that 
\begin{equation}
\mathcal{L}[\psi_{i*} \widetilde{g_*}_{\epsilon_i}] = \epsilon_i^{\frac{n-2}{2}} \mathcal{L}[\breve{g}] + o(\epsilon_i^{\frac{n-2}{2}}),
\end{equation}
and consequently $\mathcal{L}[\psi_{i*} \widetilde{g_*}_{\epsilon_i}]\rightarrow 0$ in $L^1(\Lambda^n \B)$ if and only if $n>2$ or $n=2$ and $\breve{g}$ has zero mean scalar curvature. This shows that
\begin{equation}
\lim_{\epsilon\rightarrow\epsilon_*} \sum_{i\in I(\epsilon)\setminus I(\epsilon_*)} \bigg| \int_{\tilde{W}_i\setminus\tilde{U}_i} \mathcal{L}[\epsilon_i f_i \tilde{g}_i +(1-f_i) \psi_i^* \overline{g_*}_{\epsilon_i}]\, \bigg| = 0 \quad \text{ if } n>2,
\end{equation}
which proves that 
\begin{equation}
\lim_{\epsilon\rightarrow\epsilon_*} |I_2(h_*,\epsilon)-I_2(h_*,\epsilon_*)| = 0 \quad \text{ if } n>2.
\end{equation}
For $n=2$, setting 
\begin{equation}
\kappa_i(M,g;\Omega,\Theta,\hat{g}) := \int_{\tilde{B}_i\setminus \tilde{W}_i} \mathcal{L}[\tilde{g}_i] + \int_{\tilde{W}_i\setminus \tilde{U}_i} \mathcal{L}[\psi_i^*\breve{g}_i],
\end{equation}
it is evident from \eqref{action split} that
\begin{equation}
S(0,\epsilon)-S(0,\epsilon_*) = \sum_{i\in I(\epsilon)\setminus I(\epsilon_*)} \kappa_i(M,g;\Omega,\Theta,\hat{g}) + o(1) \quad \text{ as } \epsilon\rightarrow \epsilon_*.
\end{equation}
This completes the proof.
\end{proof}

\paragraph{\textbf{Variation of Einstein-Hilbert action.}} We are now ready to define the topological functional derivative associated with connected topological variations.

\begin{definition}
\noindent
\begin{enumerate}
\item Let $(M,g;\Omega)\in \D_\EH$ be a given variational configuration, and let $(\Theta,\hat{g})$ be a signature preserving $1$-template with associated special deformation map $\Phi_{(M,g;\Omega,\Theta,\hat{g})}: [0,\epsilon_0)\rightarrow \D_\EH$. Assume, moreover, that $F : (\D_\EH,\tau_2) \rightarrow \R$ is a continuous functional. The \textit{connected topological functional derivative of $F$ at $(M,g;\Omega)$ along $(\Theta,\hat{g})$} is 
\begin{equation}
\delta_{(M,g;\Omega)} F [\Theta,\hat{g}] := \frac{d}{d \epsilon} \bigg|_{\epsilon=0^+} F \circ \Phi_{(M,g;\Omega,\Theta,\hat{g})}(\epsilon),
\end{equation}
provided that the one-sided derivative exists.
\item If the connected topological functional derivative exists for all $(M,g;\Omega)$ and all $(\Theta,\hat{g})$, we say that $F$ is \textit{differentiable with respect to connected topological variations}.
\end{enumerate}
\end{definition}

Finally we have the following version of Theorem \ref{disjoint EH theorem} for connected topological variations.

\begin{theorem}\label{connected EH theorem}
Let $S_\EH: (\D_\EH,\tau_2) \rightarrow \R$ be the Einstein-Hilbert action extended to include connected topological variations of compact support, and let $n > 2$. Then the following statements are true.
\begin{enumerate}
\item If $n<4$, the action is not differentiable with respect to connected topological variations.
\item If $n=4$, the action is differentiable almost everywhere and the connected topological functional derivative is not identically zero.
\item If $n>4$, the action is differentiable almost everywhere and the connected topological functional derivative is identically zero. 
\end{enumerate}
Specifically, ``almost everywhere'' coincides with the set of Lebesgue points of scalar curvature for each given variational configuration.
\end{theorem}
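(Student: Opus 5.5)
The plan is to work directly from the decomposition \eqref{action split} of Theorem \ref{Connected topological continuity thm}, specialised to a $1$-template $(\Theta,\hat{g})$ with point $p$ and data $(\tilde{B},\tilde{g},\tilde{U},\tilde{W},\psi,f)$, and with $h=0$. We expand $S(\epsilon)-S(0)$ to the order needed to read off $\lim_{\epsilon\to0^+}(S(\epsilon)-S(0))/\epsilon$. Since $S(0)=\int_\Omega\mathcal{L}[g]$, we have
\begin{equation*}
S(\epsilon)-S(0) = \Big(I_1(\epsilon)-\int_{\hat{B}_{\sqrt{\epsilon}}(p)}\mathcal{L}[g]\Big) + I_2(\epsilon) + I_3(\epsilon).
\end{equation*}
As in Case 2 of the previous proof, $I_1(\epsilon)=\int_{\phi_\epsilon(\U)}\mathcal{L}[g]$. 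The first bracket therefore collapses to
\begin{equation*}
-\int_{\phi_\epsilon(B(0,\epsilon_2))}\mathcal{L}[g],
\end{equation*}
an integral over the $\hat{g}$-geodesic ball of radius $\epsilon_2\sqrt{\epsilon}$, whose volume is $c\,\epsilon^{n/2}$. Suppose $p$ is a Lebesgue point of $R[g]$ (equivalently of the density $\mathcal{L}[g]/d\hat{v}$, since $|\det g|^{1/2}$ is continuous). Then this term equals $-c\,\epsilon^{n/2}(R(p)|\det g(p)|^{1/2}+o(1))$. Divided by $\epsilon$ it is $O(\epsilon^{(n-2)/2})\to0$ for every $n>2$. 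This is exactly where the restriction to Lebesgue points enters, and by Lebesgue's differentiation theorem it holds for almost every $p$.

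Next I handle the inserted topology. By Lemma \ref{Scaling Lemma}, $I_3(\epsilon)=\epsilon^{\frac{n-2}{2}}\int_{\tilde{B}\setminus\tilde{W}}\mathcal{L}[\tilde{g}]$ exactly. For $I_2$ I reuse the expansion from the $j=2$ case of Theorem \ref{Connected topological continuity thm}, built on Lemma \ref{linear interpolation lemma}:
\begin{equation*}
\mathcal{L}[\psi_*\tilde{g}_\epsilon]=\epsilon^{\frac{n-2}{2}}\mathcal{L}[\breve{g}]+o(\epsilon^{\frac{n-2}{2}}) \quad \text{in } L^1(\W\setminus\U),
\end{equation*}
so that $I_2(\epsilon)=\epsilon^{\frac{n-2}{2}}\int_{\tilde{W}\setminus\tilde{U}}\mathcal{L}[\psi^*\breve{g}]+o(\epsilon^{\frac{n-2}{2}})$. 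Writing $\kappa(\Theta,\hat{g})$ for the quantity defined in that proof, the three contributions give
\begin{equation*}
\frac{S(\epsilon)-S(0)}{\epsilon}=\epsilon^{\frac{n-4}{2}}\big(\kappa(\Theta,\hat{g})+o(1)\big)+o(1).
\end{equation*}
For $n>4$ this tends to $0$ for every template, which gives (3). For $n=4$ the limit is $\kappa$, which gives (2) once some template with $\kappa\neq0$ is exhibited. For $n=3$ the quotient diverges whenever $\kappa\neq0$, which gives (1).

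To get $\kappa\neq0$, I fix the collars, $f$ and $\tilde{g}$ on $\tilde{W}$, and modify $\tilde{g}$ only in a compact subset of the interior $\tilde{B}\setminus\tilde{W}$. For instance, a conformal bump $e^{2u}\tilde{g}$ with $u$ supported there changes $\int\mathcal{L}$ by a term that can be made nonzero, for example by a quick computation with $u=t\varphi$ to second order in $t$. This leaves $\breve{g}$ and signature preservation untouched, so if $\kappa=0$ for the original template we simply perturb it.

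The main obstacle is justifying the $o(\epsilon^{\frac{n-2}{2}})$ remainder for $I_2$ in $L^1$ rather than pointwise. The zeroth-order error is fine, since $|\gamma(\bar{y})|\le C|\bar{y}|^\alpha$ uniformly. The second derivatives, however, contribute terms such as $\epsilon^2(1-f)\partial^2 g(\sqrt{\epsilon}\bar{x})$, whose $L^1(\B)$ norm is $\epsilon^{2-n/2}\int_{\hat{B}_{\sqrt{\epsilon}}(p)}|\partial^2 g|$. By H\"older this is at most $\epsilon^{2-n/(2p)}\|g\|_{W^{2,p}}$, which is $o(\epsilon)$ precisely because $p>n/2$. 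I would check that every remainder term produced by inserting into \eqref{scalar curvature} is controlled in the same way. The first-derivative terms would be treated by the embedding into $W^{1,2}$ (or $C^{0,\alpha}$ for the lower-order factors), and the inverse-metric factors are uniformly bounded by Lemma \ref{linear interpolation lemma}. The remainders should then be $o(\epsilon^{\frac{n-2}{2}})$ after multiplication by the volume factor $\epsilon^{n/2}$.
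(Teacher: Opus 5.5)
Your proposal follows the paper's proof. You use the same split of $S(\epsilon)-S(0)$ into $I_0,\dots,I_3$, Lebesgue differentiation for the ball terms, Lemma~\ref{Scaling Lemma} for $I_3$, the expansion from Lemma~\ref{linear interpolation lemma} for $I_2$, and you read off the trichotomy from $\epsilon^{\frac{n-4}{2}}\kappa$. Merging $I_0+I_1$ into one integral over the ball of radius $\epsilon_2\sqrt{\epsilon}$ is a sound refinement, and so is producing a template with $\kappa\neq0$ via an interior conformal bump; the paper only says $\kappa$ is generically nonzero, and it asserts the $L^1$ expansion of $I_2$ without proof.

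The one step of your sketch that would not go through as written is the first-derivative remainder in $I_2$, handled via $W^{2,p}\hookrightarrow W^{1,2}$.

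\begin{itemize}
\item After rescaling, these terms carry $\sqrt{\epsilon}\,(\partial g)\circ\phi_\epsilon$. Its $L^2(\B)$ norm is $\epsilon^{\frac{2-n}{4}}\|\partial g\|_{L^2(\hat{B}_{\sqrt{\epsilon}}(p))}$.
\item Square integrability alone does not make this $o(1)$ when $n\geq3$. So the cross term $\partial\breve{g}\cdot\partial(\text{remainder})$ is only bounded by $o(\epsilon^{\frac{n-2}{4}})$, not the needed $o(\epsilon^{\frac{n-2}{2}})$.
\item The scale-invariant quantity is $\|\partial g\|_{L^n(\hat{B}_{\sqrt{\epsilon}}(p))}$. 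It does tend to zero, because $W^{1,p}_{\loc}\hookrightarrow L^n_{\loc}$ for $p>n/2$; the Sobolev exponent $np/(n-p)$ exceeds $n$ exactly then.
\end{itemize}

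The tidiest way to organise the remainder is as follows.
\begin{enumerate}
\item Write $\psi_*\tilde{g}_\epsilon=\epsilon(\breve{g}+\gamma_\epsilon)$ with $\gamma_\epsilon=(1-f)(g\circ\phi_\epsilon-g(p))$.
\item Use Lemma~\ref{Scaling Lemma} to get $\mathcal{L}[\psi_*\tilde{g}_\epsilon]=\epsilon^{\frac{n-2}{2}}\mathcal{L}[\breve{g}+\gamma_\epsilon]$ exactly.
\item Show $\gamma_\epsilon\to0$ in $W^{2,p}(\W\setminus\U)$.
\begin{itemize}
\item The top-order part is $\epsilon^{1-\frac{n}{2p}}\|\partial^2 g\|_{L^p(\hat{B}_{\sqrt{\epsilon}}(p))}$, the analogue of your H\"older computation.
\item The first-order part is bounded by $\epsilon^{\frac12-\frac{n}{2q}}\|\partial g\|_{L^q(\hat{B}_{\sqrt{\epsilon}}(p))}$, with some $q\geq\max(n,p)$ supplied by the same Sobolev embedding.
\end{itemize}
\item Conclude with Theorem~\ref{Geometric continuity thm}.
\end{enumerate}
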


\begin{proof}
Let $0<\epsilon<\epsilon_0$ and $S(\epsilon):= S_\EH\circ\Phi_{(M,g;\Omega,\Theta,\hat{g})}(\epsilon)$. Then setting $h=0$ and $k=1$ in \eqref{action split} from the proof of Theorem \ref{Connected topological continuity thm}, we see that
\begin{equation}
\begin{aligned}
S(\epsilon)-S(0) &= -\int_{\hat{B}_{\sqrt{\epsilon}}(p)} \mathcal{L}[g] &\quad =:I_0(\epsilon)\\
	&\hspace{0.5cm} + \int_{\tilde{U}} \mathcal{L}[\psi^* \bar{g}_{\epsilon}] &\quad =:I_1(\epsilon)\\
	&\hspace{1cm} + \int_{\tilde{W}\setminus \tilde{U}} \mathcal{L}[\epsilon f \tilde{g} + (1-f) \psi^* \bar{g}_{\epsilon}] &\quad =:I_2(\epsilon) \\
	&\hspace{1.5cm} + \int_{\tilde{B}\setminus\tilde{W}} \mathcal{L}[\epsilon \tilde{g}] &\quad =:I_3(\epsilon)
\end{aligned}.
\end{equation}
In view of the Lebesgue differentiation theorem, we have that
\begin{equation}
\lim_{\epsilon\rightarrow 0} \frac{1}{\operatorname{vol}_{\hat{g}}(\hat{B}_{\sqrt{\epsilon}}(p))} \int_{\hat{B}_{\sqrt{\epsilon}}(p)} \mathcal{L}[g] = R[g](p)\frac{|\det g_{\mu\nu}(p)|^{1/2}}{|\det \hat{g}_{\mu\nu}(p)|^{1/2}}
\end{equation}
for almost every $p\in \Omega$. Since 
\begin{equation}
\operatorname{vol}_{\hat{g}}(\hat{B}_{\sqrt{\epsilon}}(p)) = \omega_n \epsilon^{n/2} +o(\epsilon^{n/2}),
\end{equation}
with $\omega_n$ being the volume of the Euclidean ball $\B$ of radius $1$, it follows that
\begin{equation}
I_0(\epsilon) = -\omega_n R[g](p)\frac{|\det g_{\mu\nu}(p)|^{1/2}}{|\det \hat{g}_{\mu\nu}(p)|^{1/2}} \epsilon^{n/2} + o(\epsilon^{n/2}).
\end{equation}
Similarly, since $\psi(\tilde{U}) = \B\setminus B(0,\epsilon_2)$ is an annulus for some $0<\epsilon_2<1$, there holds
\begin{equation}
I_1(\epsilon) = \omega_n R[g](p)\frac{|\det g_{\mu\nu}(p)|^{1/2}}{|\det \hat{g}_{\mu\nu}(p)|^{1/2}}(1-\epsilon_2^{n/2})\epsilon^{n/2} + o(\epsilon^{n/2}),
\end{equation}
and in Lemma \ref{Contraction Lemma} and the proof of Theorem \ref{Connected topological continuity thm} we have already shown that
\begin{equation}
I_2(\epsilon) = \epsilon^\frac{n-2}{2} \int_{\W\setminus\U} \mathcal{L}[\breve{g}] + o(\epsilon^\frac{n-2}{2})
\end{equation}
where $\breve{g}_{\bar{\mu}\bar{\nu}}(\bar{x}) = f(\psi_{*} \tilde{g})_{\bar{\mu}\bar{\nu}}(\bar{x}) + (1-f)g_{\hat{\mu}\hat{\nu}}(p)$, and 
\begin{equation}
I_3(\epsilon) = \epsilon^\frac{n-2}{2} \int_{\tilde{B}\setminus\tilde{W}} \mathcal{L}[\tilde{g}].
\end{equation}
It follows that
\begin{equation}
S(\epsilon)-S(0)= \kappa(M,g;\Omega,\Theta,\hat{g}) \epsilon^\frac{n-2}{2} + o(\epsilon^\frac{n-2}{2}),
\end{equation}
where 
\begin{equation}
\kappa(M,g;\Omega,\Theta,\hat{g}) =  \int_{\tilde{B}\setminus\tilde{W}} \mathcal{L}[\tilde{g}] + \int_{\tilde{W}\setminus\tilde{U}} \mathcal{L}[\psi^*\breve{g}]
\end{equation}
is a quantity associated with the mean scalar curvature of the variation in the limit $\epsilon\rightarrow 0^+$. From this we conclude that generically
\begin{equation}
\delta_{(M,g;\Omega)}S_\EH[\Theta,\hat{g}] = \lim_{\epsilon\rightarrow 0^+} \frac{S(\epsilon)-S(0)}{\epsilon} = \begin{dcases}
\text{undefined} & \text{if } n<4 \\
\kappa(M,g;\Omega,\Theta,\hat{g}) & \text{if } n=4 \\
0 &\text{if } n>4
\end{dcases}.
\end{equation}
This completes the proof.
\end{proof}

This confirms that the Einstein-Hilbert action behaves in the same way under connected and disconnected topological variations. In particular, the critical dimension $n=4$ is the same and in this case the action possesses no critical points as the quantity $\kappa(M,g;\Omega,\Theta,\hat{g})$ is generically non-zero.

\begin{remark}
It should be noted that under stronger regularity assumptions it is possible to promote ``almost everywhere'' statements to ``everywhere'' statements in the above theorem. For example, this is the case for $C^2$ metrics for which scalar curvature is continuous and hence every point is a Lebesgue point. But even in the case of Sobolev spaces, this can be achieved via direct estimates of terms $I_0(\epsilon)$ and $I_1(\epsilon)$ via the H\"older inequality, when $p$ is large enough. We omit the details.
\end{remark}

\begin{remark}[Comparison with ``punctured space'' derivatives] A version of the topological derivative that relies on simpler means than our proposed gluing setup has been proposed and is more appropriate for different types of problems. In this approach, the associated deformation is obtained by removing a shrinking ball or more complicated set whose radius goes to zero about some point, in which case the limit is locally a punctured space; see, e.g., Amstutz \cite{amstutz2021} and references therein. The difference in action with respect to this deformation then corresponds to the term $I_0(\epsilon)$ in the proof of Theorem \ref{connected EH theorem}, and more generally the term $I_0(\epsilon,h)$ which appears in the proof of Theorem \ref{Connected topological continuity thm}. Since we have demonstrated that the later term is continuous, Einstein-Hilbert action is also continuous under this type of variation. It is worth noting, however, that since $I_0(\epsilon) = O(\epsilon^{n/2})$, the critical dimension for these ``punctured space'' derivatives is $n=2$ instead of $n=4$. This means that, without extra effort, topological variations which come in the form of expanding ``cracks'' in spacetime can also be included in the theory; note that these would directly affect fundamental or singular homology groups and are indeed topological. 

This type of variation is naturally fitted to optimization problems in materials science, and is associated with the appearance of cracks or other defects. Nevertheless, in the setting of general relativity this comes with the additional complication of the appearance of arbitrarily many codimension-one boundaries in spacetime, and thus an uncontrolled amount of naked singularities in the sense of geodesic incompleteness. This scenario is quite unphysical at least in the classical (non-quantum) sense, and we will not pursue it further here. Note that in our gluing setup no arbitrary boundaries appear, as the ``cracks'' are filled by gluing other manifolds along the boundaries.
\end{remark}

\section{Functional blow-up of degenerating metrics} \label{Blow-up}

In this section we discuss an important geometric constraint arising in our analysis, which moreover crucially informs the choice of appropriate topological variations discussed earlier in the paper. In Section \ref{Disconnected topological variations} it was shown that, for a closed manifold $M'$, approaching the zero section in $W^{2,p}(\operatorname{Sym} M')$ along rays in $W^{2,p}(\Lz M')$ preserved continuity of the Einstein-Hilbert functional for $n>2$. While that approach was reasonable for introducing derivatives, it is natural to ask if this continues to be the case when one considers variations that are not restricted on rays, or more generally occur as the reverse-collapsing of lower dimensional objects other than points. What turns out to be the case is that Einstein-Hilbert is very sensitive in the way the boundary of $W^{2,p}(\Lz M')$ is approached, and that precise control of the degeneration is warranted to prevent catastrophic blow-ups.

Blow-up of curvature is a recurrent theme in geometric analysis, appearing for instance in finite-time singularity formation for geometric flows such as the Ricci flow \cite{cao2007,topping2010} and in bubbling or compactness breakdown for conformally invariant curvature problems such as the Yamabe equation \cite{lee1987,druet2005,brendle2007}. In \cite{nguyen2022}, Nguyen calculates the blow-up of scalar curvature for metrics collapsing along a distribution. In a similar spirit, we prove that in the following general case metrics collapsing along a submanifold can have diverging Einstein-Hilbert action, while derivatives of the metric up to any order remain uniformly bounded during the blow-up.

\begin{proposition}\label{Collapsing product}
Let $N$ be a closed smooth manifold of dimension $n\geq 2$, and let 
\begin{equation}
M:=S^1 \times N.
\end{equation}
There exists a family $\{ g_\epsilon: \epsilon\in (0,1]\}$ of smooth metrics of any given signature in $M$ such that
\begin{enumerate}
\item The $g_\epsilon$ are uniformly bounded in $C^\infty(\Lz M)$, and 
\item The Einstein-Hilbert functional evaluated at $g_\epsilon$ diverges, in particular
\begin{equation}
\int_M \mathcal{L}[g_\epsilon] = O(\epsilon^{-1}) \quad \text{as} \quad \epsilon\rightarrow 0^+.
\end{equation}
\end{enumerate}
\end{proposition}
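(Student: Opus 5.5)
The plan is to realize the blow-up by collapsing the $S^1$ factor at a rate faster than the one dictated by the natural scaling. The inverse metric component $g^{\theta\theta}$ then blows up, while all metric components and their derivatives stay bounded. Concretely, fix a smooth metric $h_0$ on $N$ of the appropriate signature (with the $S^1$ direction carrying the remaining sign $\sigma=\pm1$), a non-constant smooth function $f:S^1\to\R$, and set
\begin{equation*}
g_\epsilon := \sigma\,\epsilon^2\, d\theta^2 + e^{2f(\theta)} h_0 .
\end{equation*}
Every component of $g_\epsilon$ in product coordinates $(\theta,x)$ is a fixed smooth function multiplied by $1$ or $\epsilon^2\le 1$. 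Hence all partial derivatives of all orders are bounded uniformly in $\epsilon\in(0,1]$, which gives item (1) immediately. The only thing that degenerates is $g_{\theta\theta}\to 0$, and this is precisely the mechanism the proposition is meant to exhibit.

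For item (2), I will reduce to a warped product by the local change of variable $t=\epsilon\theta$. This turns $g_\epsilon$ into $\sigma\,dt^2 + a(t)^2 h_0$, where $t\in[0,2\pi\epsilon)$ is periodic and $a(t)=e^{f(t/\epsilon)}$, so that $a'(t)=\epsilon^{-1}f'(t/\epsilon)e^{f(t/\epsilon)}$. I will then use the standard scalar curvature formula for a warped product over a one-dimensional base with $\dim N=n$:
\begin{equation*}
R = a^{-2}R[h_0] - \sigma\Big(2n\,\frac{a''}{a} + n(n-1)\frac{a'^2}{a^2}\Big).
\end{equation*}
This formula can be checked directly from \eqref{scalar curvature}, and the sign $\sigma$ accounts for the signature of the fibre direction. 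The volume element is $a^n\,dt\,dv_{h_0}$. Integrating over $M$ and integrating the $a''a^{n-1}$ term by parts in $t$ produces no boundary term, by periodicity. The action therefore reduces to
\begin{equation*}
\int_M \mathcal{L}[g_\epsilon] = \int_0^{2\pi\epsilon}\!\!\int_N a^{n-2}R[h_0]\,dv_{h_0}\,dt \;+\; \sigma\, n(n-1)\,\mathrm{vol}(N,h_0)\int_0^{2\pi\epsilon} a'^2a^{n-2}\,dt .
\end{equation*}
The first term is $O(\epsilon)$, because $a$ is bounded above and below and the $t$-interval has length $2\pi\epsilon$. In the second term, I substitute back $t=\epsilon\theta$, so that $a'^2=\epsilon^{-2}f'^2e^{2f}$ while $dt=\epsilon\,d\theta$. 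This gives exactly
\begin{equation*}
\sigma\, n(n-1)\,\mathrm{vol}(N,h_0)\,\epsilon^{-1}\int_0^{2\pi} f'(\theta)^2 e^{nf(\theta)}\,d\theta ,
\end{equation*}
which is a nonzero multiple of $\epsilon^{-1}$ because $n\ge2$ and $f$ is non-constant. The conclusion is therefore $\int_M\mathcal{L}[g_\epsilon]=c\,\epsilon^{-1}+O(\epsilon)$ with $c\neq 0$.

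The main obstacle I anticipate is cancellation. A naive collapse such as $\epsilon\,d\theta^2+h_\theta$ only yields $\epsilon^{-1/2}$, since $g^{\theta\theta}\sim\epsilon^{-1}$ is offset by the volume factor $\epsilon^{1/2}$. Moreover, the second-derivative terms integrate to zero, so one must make sure the surviving first-order quadratic term has a definite sign. The choice $g_{\theta\theta}=\epsilon^2$ settles the exponent. The conformal warping $e^{2f(\theta)}$ of the whole fibre settles the sign, because after the integration by parts the remainder is a perfect square $f'^2$ weighted by $e^{nf}$. The remaining minor point is signature: "any given signature" must be read as any signature that $N$ admits up to the one sign carried by $d\theta^2$. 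The computation above is insensitive to which sign is chosen, apart from the overall factor $\sigma$, which does not affect divergence.
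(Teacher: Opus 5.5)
Your proof is correct, and it uses the same mechanism as the paper. The circle collapses as $\epsilon^2\,dt^2$ while the fibre metric depends on the circle variable. The slices $\{t\}\times N$ then have extrinsic curvature of order $\epsilon^{-1}$, so $({K^a}_a)^2-K_{ab}K^{ab}$ is of order $\epsilon^{-2}$, and the volume factor $\epsilon$ leaves $\epsilon^{-1}$.

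Where you differ is the choice of fibre deformation. The paper takes a traceless deformation $\gamma+\delta\sin(t)\,\theta$ with $\gamma^{ab}\theta_{ab}=0$, together with the ADM split $R=R[\gamma(t)]-K_{ab}K^{ab}+({K^a}_a)^2+\nabla_\mu V^\mu$. Its singular coefficient is then governed by $-\theta_{ab}\theta^{ab}+O(\delta)$. That requires $\delta$ to be small, and it requires $\theta_{ab}\theta^{ab}>0$, which is automatic only for a definite fibre metric. For indefinite $\gamma$, a nonzero traceless $\theta_{ab}$ can have $\theta_{ab}\theta^{ab}$ of either sign or zero. So the paper's remark that other signatures differ "only up to a sign" is not quite immediate.

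Your conformal deformation $e^{2f}h_0$ makes $K_{ab}$ proportional to $h_{0ab}$. Hence $({K^a}_a)^2-K_{ab}K^{ab}=n(n-1)f'^2/\epsilon^2$ pointwise, whatever the signature of $h_0$, and no smallness parameter is needed. Your integration by parts of the $a''a^{n-1}$ term is exactly the paper's dropping of the divergence $\nabla_\mu V^\mu$.

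In Riemannian signature the two choices give singular coefficients of opposite sign: negative for the paper, positive for you. This does not matter for the proposition, nor for its later rescaled use in the $\tau_0'$ discontinuity argument, where only nonvanishing of the coefficient is needed. Your caveat about which signatures are realisable as products on $S^1\times N$ is apt, and it applies equally to the paper's argument.
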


\begin{proof}
We prove the result for Riemannian signature, as other signatures are only different up to a sign. Denote coordinates on $N$ by $x^a$, $a=1,\ldots,n$. Let $\gamma\in C^\infty(\Lz N)$ be a smooth Riemannian metric on $N$, and let $\theta\in C^\infty(\operatorname{Sym} M)$ be a smooth, symmetric, not identically vanishing $(0,2)$-tensor field such that $\gamma^{ab}\theta_{ab}=0$ everywhere on $N$. Denote by $t$ the coordinate on $S^1 = \R/2\pi\mathbb{Z}$ and let 
\begin{equation}
\gamma_{ab}(t)=\gamma_{ab} + \delta \sin(t) \theta_{ab},
\end{equation}
choosing $\delta>0$ small enough so that $\gamma(t)\in C^{\infty}(\Lz N)$ for all $t\in S^1$. In the product $M=S^1\times N$, define the family of smooth metrics
\begin{equation}
g_\epsilon = \epsilon^2 dt\otimes dt + \gamma_{ab}(t)dx^a \otimes dx^b.
\end{equation}
We will show that the $g_\epsilon$ have the desired properties.

The fact that $g_\epsilon$ are uniformly bounded in $C^\infty(\Lz M)$ is straightforward. Note that
\begin{equation}
(g_\epsilon)_{tt}=\epsilon^2, \qquad (g_\epsilon)_{ta}=0, \quad (g_\epsilon)_{ab} = \gamma_{ab}(t),
\end{equation}
so all derivatives of $(g_\epsilon)_{tt}$ vanish, while the same is true for $(g_\epsilon)_{ta}$ and its derivatives. Moreover, since $\sin(t)$ and all its derivatives are all bounded in absolute value by one, it is clear that the $\partial_\alpha (g_\epsilon)_{ab}$ are all uniformly bounded by constants that depend only on $\gamma$, $\theta$, $\delta$, and the multi-index $\alpha$. This proves item (1).

To prove item (2), first note that $M$ is naturally foliated by the hypersurfaces $N_t=\{t\}\times N$. Performing the standard $(n+1)$-split (see, e.g. \cite{giulini2014}), and noting that the metric is already expressed in coordinates where the lapse function is $\epsilon$ and the shift vector field is zero, the scalar curvature decomposition formula reads
\begin{equation}
R[g_\epsilon] = R[\gamma(t)] - K_{ab}(t)K^{ab}(t) + {K^a}_a(t)^2+ \nabla_\mu V^\mu,
\end{equation}
where $K_{ab}(t) = -\frac{1}{2\epsilon}\partial_t \gamma_{ab}(t)$ is the extrinsic curvature of the hypersurface $N_t$ embedded in $M$, indices are raised and lowered with respect to the metric $\gamma_{ab}(t)$ and $\nabla_\mu V^\mu$ is a divergence term which will be irrelevant in the integration. Since $\det (g_\epsilon)_{\mu\nu} = \epsilon^2 \det \gamma_{ab}(t)$, there follows
\begin{equation}
\int_M \mathcal{L}[g_\epsilon] = \int_{S^1}\int_{N_t} (R[\gamma(t)] - K_{ab}(t)K^{ab}(t) + {K^a}_a(t)^2)\epsilon |\det \gamma_{ab}(t)\,|^{1/2} \,dx\,dt.
\end{equation}
Observe that $R[\gamma(t)]|\det \gamma_{ab}(t)|^{1/2}$ is a smooth function on each $N_t$, which is moreover smooth as a function of $t$. Since both $N$ and $S^1$ are compact, we have that
\begin{equation}
c_1 := \int_{S^1}\int_{N_t} R[\gamma(t)] |\det \gamma_{ab}(t)\,|^{1/2} \,dx\,dt \in \R.
\end{equation}
Regarding the quadratic part of the integral, we easily obtain
\begin{equation}
K_{ab}(t) = -\frac{1}{2\epsilon} \frac{\partial}{\partial t} \gamma_{ab}(t) = -\frac{\delta \cos(t)}{2\epsilon}\theta_{ab},
\end{equation}
and consequently, 
\begin{equation}
{K^a}_a(t)=\gamma^{ab}(t)K_{ab}(t)=-\frac{\delta\cos(t)}{2\epsilon}\gamma^{ab}(t)\theta_{ab}
\end{equation}
and
\begin{equation}
K_{ab}(t)K^{ab}(t)=\gamma^{ac}(t)\gamma^{bd}(t)K_{ab}(t)K_{cd}(t)=\frac{\delta^2\cos^2(t)}{4\epsilon^2}\gamma^{ac}(t)\gamma^{bd}(t)\theta_{ab}\theta_{cd}.
\end{equation}
It follows that
\begin{equation}
{K^a}_a(t)^2-K_{ab}(t)K^{ab}(t) = \frac{\delta^2\cos^2(t)}{4\epsilon^2}((\gamma^{ab}(t)\theta_{ab})^2-\gamma^{ac}(t)\gamma^{bd}(t)\theta_{ab}\theta_{cd}).
\end{equation}
Taking into account that $\gamma^{ab}(t)=\gamma^{ab}+O(\delta)$ and that $\gamma^{ab}\theta_{ab}=0$, it follows that
\begin{equation}
(\gamma^{ab}(t)\theta_{ab})^2-\gamma^{ac}(t)\gamma^{bd}(t)\theta_{ab}\theta_{cd} = -\theta_{ab}\theta^{ab} + O(\delta).
\end{equation}
Choosing $\delta>0$ possibly even smaller, we can have
\begin{equation}
(\gamma^{ab}(t)\theta_{ab})^2-\gamma^{ac}(t)\gamma^{bd}(t)\theta_{ab}\theta_{cd} \leq 0
\end{equation}
and in fact strictly negative in a set of positive measure. Thus
\begin{equation}
c_{-1}=\frac{\delta^2}{4} \int_{S^1}\int_{N_t}\cos^2(t)((\gamma^{ab}(t)\theta_{ab})^2-\gamma^{ac}(t)\gamma^{bd}(t)\theta_{ab}\theta_{cd})|\det\gamma_{ab}(t)|^{1/2}\,dx\,dt \in \R_-.
\end{equation}
Taking all the above into account, we conclude that 
\begin{equation}
\int_M \mathcal{L}[g_\epsilon] = \frac{c_{-1}}{\epsilon}+c_1\epsilon.
\end{equation}
This completes the proof.
\end{proof}

We denote by $\bar{W}^{2,p}(\Lz M')$ and $\partial W^{2,p}(\Lz M')$ the closure and boundary of $W^{2,p}(\Lz M')$ in the Banach space $W^{2,p}(\operatorname{Sym}M')$ respectively. Note that some points at the boundary $\partial W^{2,p}(\Lz M')$ are not everywhere degenerate and hence may have positive $n$-dimensional Hausdorff measure, which is out of line with the intended properties of our variational framework. For this reason we introduce the \textit{essentially degenerate boundary} $\tilde{\partial}W^{2,p}(\Lz M')$ which consists of almost everywhere degenerate symmetric $(0,2)$-tensor fields which occur as limits of non-degenerate metrics in $W^{2,p}(\Lz M')$. It is clear that $\tilde{\partial}W^{2,p}(\Lz M')$ is a subset of $\partial W^{2,p}(\Lz M')$. Likewise, we define the \textit{essentially degenerate closure} $\tilde{W}^{2,p}(\Lz M') = W^{2,p}(\Lz M') \cup \tilde{\partial}W^{2,p}(\Lz M')$. This is also a cone that contains as a special case the zero section $0\in W^{2,p}(\operatorname{Sym}M')$ discussed previously, corresponding to the vertex of the cone and featuring the highest degree of degeneracy. Note that in view of the Morrey embedding applicable to the chosen regularity, ``almost everywhere'' in this case actually implies ``everywhere'', but the notion as we define it can be generalized for spaces of lower regularity. To generalize the discussion in Section \ref{Disconnected topological variations}, one may proceed to define variations which originate from any point at the essentially degenerate boundary rather than the vertex alone. The following is the direct analogue of Definition \ref{inductive limit geometric variations} given the above considerations.

\begin{definition}
\noindent
\begin{enumerate}
\item Let $(M,g;\Omega)\in \D_\EH$, $M'$ be a closed manifold and let $\mathcal{U}(M,g;\Omega)$ be a neighborhood of the zero section in $W^{2,p}_c(\operatorname{Sym}\Omega)$ such that the variational map $\Phi_{(M,g;\Omega,M')}: \mathcal{U}(M,g;\Omega) \times \tilde{W}^{2,p}(\Lz M') \rightarrow \D_\EH$,
\begin{equation}
\Phi_{(M,g;\Omega, M')}(h,g') = \begin{dcases}
	(M,g+h;\Omega) & \text{if } g' \in \tilde{\partial} W^{2,p}(\Lz M') \\
  	(M\sqcup M', (g+h) \oplus g' ; \Omega \sqcup M') & \text{if } g' \in W^{2,p}(\Lz M')
  \end{dcases}.
\end{equation}
is well defined.
\item We denote by $\tau_0$ the final topology on $\D_\EH$ generated by the family of maps as above.
\end{enumerate}
\end{definition}

Since the variational maps defining $\tau_1$ are restrictions of the variational maps defining $\tau_0$ along rays emerging from the origin, it is clear that $\tau_1$ is finer than $\tau_0$. The study of the boundary $\partial W^{2,p}(\Lz M')$ or essentially degenerate boundary $\tilde{\partial}W^{2,p}(\Lz M')$ can turn out to be very interesting in their own right, and topology $\tau_0$ certainly has the merit of generality, but it also has significant disadvantages for the purposes of our framework. Since a general point at the boundary is not as conveniently placed as the vertex of the cone, one setback is the absence of an easily comprehensible tangent structure at the boundary; if it exists, such structure would explicitly depend on the point on the boundary and the degree of degeneracy there. Since the ultimate goal is to introduce derivatives and study them in a more or less unified scheme, this already poses a significant challenge, without however being fatal by itself. What is fatal and makes this topology unsuitable for Einstein-Hilbert is the inevitable discontinuity which emerges as a consequence of Proposition \ref{Collapsing product}.

\begin{proposition}\label{tau0 discontinuity}
$S_\EH:(\D_\EH,\tau_0)\rightarrow \R$ is everywhere discontinuous for any $n\geq 2$ and any value of $p$.
\end{proposition}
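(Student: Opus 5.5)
The plan is to show that at every variational configuration $(M,g;\Omega)\in\D_\EH$ there is a sequence converging to it in $\tau_0$ along which $S_\EH$ does not converge to $S_\EH[M,g;\Omega]$. Since each variational map $\Phi_{(M,g;\Omega,M')}$ is continuous for $\tau_0$ by construction of the final topology, it suffices to exhibit a closed manifold $M'$ and a family $g'_\epsilon\in W^{2,p}(\Lz M')$ converging, in the subspace topology of $\tilde{W}^{2,p}(\Lz M')\subset W^{2,p}(\operatorname{Sym}M')$, to a point $g'_0\in\tilde{\partial}W^{2,p}(\Lz M')$. We then examine
\begin{equation}
S_\EH\circ\Phi_{(M,g;\Omega,M')}(0,g'_\epsilon)=S_\EH[M,g;\Omega]+\int_{M'}\mathcal{L}[g'_\epsilon].
\end{equation}
By continuity of $\Phi$, $\Phi(0,g'_\epsilon)\to\Phi(0,g'_0)=(M,g;\Omega)$ in $\tau_0$. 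So discontinuity follows as soon as $\int_{M'}\mathcal{L}[g'_\epsilon]\not\to 0$. Because we take $h=0$, the argument is insensitive to the chosen $(M,g;\Omega)$, which gives discontinuity everywhere.

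For $n\geq 3$ we use Proposition \ref{Collapsing product}, with the dimension shift in mind: there $\dim M=\dim N+1$. We take $M'=S^1\times N$ with $N$ a closed manifold of dimension $n-1\geq 2$, for instance $S^{n-1}$, and
\begin{equation}
g'_\epsilon=\epsilon^2dt\otimes dt+\gamma_{ab}(t)dx^a\otimes dx^b, \qquad g'_0=\gamma_{ab}(t)dx^a\otimes dx^b.
\end{equation}
The steps are as follows.
\begin{enumerate}
\item Show $g'_\epsilon-g'_0=\epsilon^2dt\otimes dt\to 0$ in $C^\infty$, hence in $W^{2,p}(\operatorname{Sym}M')$ for every $p$.
\item Show that $g'_0$ is degenerate at every point, since $\partial_t$ lies in its kernel. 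As a limit of the nondegenerate $g'_\epsilon$, it therefore belongs to $\tilde{\partial}W^{2,p}(\Lz M')$. Other signatures are obtained by flipping signs of blocks, as in the proof of Proposition \ref{Collapsing product}.
\item Use Proposition \ref{Collapsing product}, which gives $\int_{M'}\mathcal{L}[g'_\epsilon]=c_{-1}/\epsilon+c_1\epsilon$ with $c_{-1}<0$. This diverges to $-\infty$, so continuity fails badly.
\end{enumerate}
Since all metrics involved are smooth, nothing depends on $p$.

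For $n=2$ the collapsing construction fails, because a trace-free $\theta$ on a one-dimensional $N$ vanishes. Here we instead use a ray, as in Theorem \ref{Disjoint continuity thm}. Take $M'=S^2$ with the round metric $g'$ and set $g'_\epsilon=\epsilon g'$. This family converges in $W^{2,p}$ to the vertex $0$, which lies in $\tilde{\partial}W^{2,p}(\Lz M')$. By Lemma \ref{Scaling Lemma} and Gauss--Bonnet, $\int_{M'}\mathcal{L}[\epsilon g']=\int_{S^2}\mathcal{L}[g']=8\pi\neq 0$ for every $\epsilon$, which gives a nonvanishing jump. Alternatively, one can observe that $\tau_1$ is finer than $\tau_0$ and invoke the $n=2$ part of Theorem \ref{Disjoint continuity thm} directly.

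The main obstacle I expect is formal rather than analytic. One must check that the domain $\tilde{W}^{2,p}(\Lz M')$ of the variational map carries the subspace topology in which $g'_\epsilon\to g'_0$, and that $\Phi$ is well defined at $g'_0$, so that the final topology really makes $\Phi(0,g'_\epsilon)\to(M,g;\Omega)$. The essential step here is verifying that $g'_0$ is degenerate everywhere and not merely on a subset, so that it belongs to the essentially degenerate boundary.
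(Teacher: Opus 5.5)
Your proposal is correct and follows the paper's proof. The case $n=2$ goes through $\tau_0\subset\tau_1$ and Theorem~\ref{Disjoint continuity thm}; your $S^2$ ray with the Gauss--Bonnet jump $8\pi$ is just a concrete instance of that theorem. The case $n\geq 3$ goes through the collapsing family of Proposition~\ref{Collapsing product} on $S^1\times N$ with $\dim N=n-1$, and your explicit identification of the limit $g'_0=\gamma_{ab}(t)\,dx^a\otimes dx^b$ as an everywhere-degenerate point of $\tilde{\partial}W^{2,p}(\Lz M')$ fills in a step the paper leaves implicit.
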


\begin{proof}
Since $\tau_0 \subset \tau_1$, it follows by Theorem \ref{Disjoint continuity thm} that $S_\EH:(\D_\EH,\tau_0)\rightarrow \R$ is also everywhere discontinuous for $n = 2$. In the remaining case $n\geq 3$, by considering $S^1 \times N$ and $g_\epsilon$ as in Proposition \ref{Collapsing product} we have that $S_\EH\circ\Phi_{(M,g;\Omega,S^1\times N)}$ is discontinuous for any $(M,g;\Omega)\in \D_\EH$, which proves the claim.
\end{proof}

Note that it is possible to obtain a tangent cone structure at $0\in W^{2,p}(\operatorname{Sym}M')$ without restricting to rays. One may restrict the variational maps of $\tau_0$ to include only the vertex of the cone and no other boundary points, i.e. to consider infinitesimal disconnected components that are only growing out of a point geometrically. In that case the tangent structure at the vertex is precisely a closed tangent cone which coincides with the closure $\bar{W}^{2,p}(\Lz M')$; this fact holds for open cones in Banach spaces in general.

\begin{definition}
\noindent
\begin{enumerate}
\item For a closed manifold $M'$, we denote $\dot{W}^{2,p}(\Lz M') = W^{2,p}(\Lz M') \cup \{ 0 \}$.
\item Let $(M,g;\Omega)\in \D_\EH$, $M'$ be a closed manifold and let $\mathcal{U}(M,g;\Omega)$ be a neighborhood of the zero section in $W^{2,p}_c(\operatorname{Sym}\Omega)$ such that the variational map $\Phi_{(M,g;\Omega,M')}: \mathcal{U}(M,g;\Omega) \times \dot{W}^{2,p}(\Lz M') \rightarrow \D_\EH$,
\begin{equation}
\Phi_{(M,g;\Omega, M')}(h,g') = \begin{dcases}
	(M,g+h;\Omega) & \text{if } g' = 0 \\
  	(M\sqcup M', (g+h) \oplus g' ; \Omega \sqcup M') & \text{if } g' \in W^{2,p}(\Lz M')
  \end{dcases}.
\end{equation}
is well defined.
\item We denote by $\tau_0'$ the final topology on $\D_\EH$ generated by the family of maps as above.
\end{enumerate}
\end{definition}

Note that by the definition of final topology there holds $\tau_0 \subset \tau_0' \subset \tau_1$. Topology $\tau_0'$ thus constitutes a middle ground which still possesses a tangent cone structure in topological directions and is not as restrictive as in only permitting the vertexes to be approached along rays. Interestingly, even this is not enough to break the discontinuity spell. The culprit is the fact that the boundary $\partial W^{2,p}(\Lz M')$ also belongs to the tangent cone, i.e. it is possible to approach zero by non-degenerate metrics which still become tangentially degenerate in the limit. This possibility is ruled out in $\tau_1$, as its tangent cone along $M'$ is only the open cone $W^{2,p}(\Lz M')$ with the origin adjoined.

\begin{proposition}\label{tau0' discontinuity}
$S_\EH:(\D_\EH,\tau_0')\rightarrow \R$ is everywhere discontinuous for any $n\geq 2$ and any value of $p$.
\end{proposition}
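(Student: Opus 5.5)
The plan mirrors the proof of Proposition \ref{tau0 discontinuity}, with the collapsing family of Proposition \ref{Collapsing product} modified so that it converges to the vertex $0$ of the cone rather than to a degenerate boundary point.

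For $n=2$ nothing new is needed. Since $\tau_0'\subset\tau_1$, a map continuous for $\tau_0'$ is continuous for $\tau_1$, and Theorem \ref{Disjoint continuity thm} already gives discontinuity there; in fact the rays used in that proof are among the paths in $\dot W^{2,p}(\Lz M')$ that define $\tau_0'$. So it suffices to treat $n\geq 3$.

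Fix an arbitrary $(M,g;\Omega)\in\D_\EH$ and take $M'=S^1\times N$ with $\dim N=n-1\geq 2$. It suffices to exhibit metrics $g'_\epsilon\in W^{2,p}(\Lz M')$ with $g'_\epsilon\to 0$ in $W^{2,p}(\operatorname{Sym}M')$ but $\int_{M'}\mathcal{L}[g'_\epsilon]\not\to 0$. Indeed, continuity of $S_\EH$ in $\tau_0'$ forces continuity of $S_\EH\circ\Phi_{(M,g;\Omega,M')}$ at $(0,0)$. Along such a sequence this composite equals $S_\EH[M,g;\Omega]+\int_{M'}\mathcal{L}[g'_\epsilon]$, which does not converge to the value $S_\EH[M,g;\Omega]$ at $g'=0$. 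Since the base point was arbitrary, this gives discontinuity everywhere.

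To build the family, I would rescale the metrics of Proposition \ref{Collapsing product}. Set $g'_\epsilon=\lambda(\epsilon)\,g_\epsilon$, where $g_\epsilon=\epsilon^2dt^2+\gamma_{ab}(t)dx^adx^b$. By Lemma \ref{Scaling Lemma} and the computation in that proposition,
\begin{equation}
\int_{M'}\mathcal{L}[\lambda g_\epsilon]=\lambda^{\frac{n-2}{2}}\Big(\frac{c_{-1}}{\epsilon}+c_1\epsilon\Big),\qquad c_{-1}<0 .
\end{equation}
Item (1) of that proposition says the $g_\epsilon$ are uniformly bounded in $C^\infty$, hence in $W^{2,p}$ for every $p$. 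Therefore $\|\lambda g_\epsilon\|_{W^{2,p}}\leq C\lambda\to 0$ whenever $\lambda\to 0$.

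The key step is choosing $\lambda$. Taking $\lambda(\epsilon)=\epsilon^{2/(n-2)}$ makes the action tend to $c_{-1}\neq 0$. Taking $\lambda=\epsilon^{\beta}$ with $0<\beta<2/(n-2)$ makes it diverge to $-\infty$. Either choice gives a jump at the vertex, which is exactly the discontinuity required. The dependence on $n$ only enters through the exponent $\beta$; the argument is independent of $p$ because the family is smooth.

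The main obstacle is checking that the scaled family really lies in the domain of the variational maps. Each $\lambda g_\epsilon$ is a genuine non-degenerate metric for $\epsilon>0$, so it belongs to $W^{2,p}(\Lz M')$. Its limit is the vertex $0$ itself, not a tangentially degenerate boundary point. Both facts hold by construction, so the path stays inside $\dot W^{2,p}(\Lz M')$. This also explains why the restriction to rays in $\tau_1$ is essential: $\lambda g_\epsilon$ approaches $0$ along the direction $\epsilon^2dt^2+\gamma$, which tends to the boundary of the cone $W^{2,p}(\Lz M')$, and the rays allowed in $\tau_1$ cannot follow such a direction. For other signatures the same construction works with signs adjusted, as noted in Proposition \ref{Collapsing product}.
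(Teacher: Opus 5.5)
Your proposal is correct and is essentially the paper's proof. For $n\geq 3$ the paper also rescales the collapsing family of Proposition \ref{Collapsing product} as $g'_\epsilon=\epsilon^{\delta}g_\epsilon$ with $0<\delta<2/(n-2)$, uses the uniform $C^\infty$ bounds to get $g'_\epsilon\to 0$ in $W^{2,p}$ while $\int\mathcal{L}[g'_\epsilon]$ diverges, and handles $n=2$ through $\tau_0'\subset\tau_1$ and Theorem \ref{Disjoint continuity thm}; your choice $\dim N=n-1$ (so that $\dim M'=n$) and your borderline choice $\lambda=\epsilon^{2/(n-2)}$, which gives a finite nonzero jump $c_{-1}$, are small refinements of the same argument.
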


\begin{proof}
The case $n=2$ is covered in Proposition \ref{tau0 discontinuity} and Theorem \ref{Disjoint continuity thm}. For $n\geq 3$, let $0< \delta < 2/(n-2)$ and consider the sequence of metrics $g_\epsilon' = \epsilon^\delta g_\epsilon$ in $S^1 \times N$ as in Proposition \ref{Blow-up}. Since the $g_\epsilon$ are uniformly bounded up to any order and $\delta>0$, it follows that $g_\epsilon' \rightarrow 0$ in $W^{2,p}(\operatorname{Sym} S^1\times N)$. However,
\begin{equation}
\int_{S^1\times N}  \mathcal{L}[g_\epsilon'] = O\big( \epsilon^{\frac{n-2}{2}\delta-1} \big),
\end{equation}
which is divergent since $\delta<2/(n-2)$. This makes $S_\EH \circ \Phi_{(M,g;\Omega,S^1\times N)}$ discontinuous for any $(M,g;\Omega) \in \D_\EH$, which proves the claim.
\end{proof}

This does not rule out the possibility for a topology coarser than $\tau_1$ that still possesses a tangent cone structure in topological directions and also preserves continuity of Einstein-Hilbert, at least within the limits of what Theorem \ref{Disjoint continuity thm} has specified as essential obstructions. Searching for such a topology, the above considerations make it clear that the boundary $\partial W^{2,p}(\Lz M')$ must be avoided as zero is being approached, even tangentially. This strongly points to some kind of uniform non-degeneracy condition. Indeed, looking at the scalar curvature formula \eqref{scalar curvature}, one can be easily convinced that pathologies arise due to the fact that the inverse $g^{\mu\nu}$ is uncontrolled in the Sobolev topology. This motivates the following.

\begin{definition}
\noindent
\begin{enumerate}
\item Let $M'$ be a closed manifold, $\hat{g}$ an auxiliary smooth Riemannian metric on $M'$, and let $0<\Lambda<\infty$ be a real number. The \textit{uniformly non-degenerate open cone} $\mathcal{C}(M',\hat{g},\Lambda)$ consists of all metrics $g'\in W^{2,p}(\Lz M')$ such that
\begin{equation}
\bigg\| \bigg( \frac{g'}{\| g' \|_{W^{2,p}(M',\hat{g})}} \bigg)^{-1} \bigg\|_{L^{\infty}(M',\hat{g})} < \Lambda.
\end{equation}
Denote by $\dot{\mathcal{C}}(M',\hat{g},\Lambda) = \mathcal{C}(M',\hat{g},\Lambda) \cup \{ 0 \}$.

\item Let $(M,g;\Omega)\in \D_\EH$, $M'$ be a closed manifold and let $\mathcal{U}(M,g;\Omega)$ be a neighborhood of the zero section in $W^{2,p}_c(\operatorname{Sym}\Omega)$ such that the variational map $\Phi_{(M,g;\Omega,M',\hat{g},\Lambda)}: \mathcal{U}(M,g;\Omega) \times \dot{\mathcal{C}}(M',\hat{g},\Lambda) \rightarrow \D_\EH$,
\begin{equation}
\Phi_{(M,g;\Omega, M',\hat{g},\Lambda)}(h,g') = \begin{dcases}
	(M,g+h;\Omega) & \text{if } g' = 0 \\
  	(M\sqcup M', (g+h) \oplus g' ; \Omega \sqcup M') & \text{if } g' \in \mathcal{C}(M',\hat{g},\Lambda)
  \end{dcases}.
\end{equation}
is well defined.
\item We denote by $\tau_\mathrm{UND}$ the final topology on $\D_\EH$ generated by the family of maps as above.
\end{enumerate}
\end{definition}

It is clear that $\tau_0' \subset \tau_\mathrm{UND} \subset \tau_1$, and that degenerate directions are excluded from the tangent cone. Indeed, the tangent cone of $\dot{\mathcal{C}}(M',\hat{g},\Lambda)$ at the vertex is the uniformly non-degenerate closed cone $\bar{\mathcal{C}}(M',\hat{g},\Lambda)$, which stays clear of the nontrivial boundary $\partial W^{2,p}(\Lz M') \setminus \{0\}$ for any value of the parameters. Moreover, we show bellow that continuity of Einstein-Hilbert is preserved in the sense of Theorem \ref{Disjoint continuity thm}.

\begin{theorem}
$(S_\EH,\tau_\mathrm{UND})$ is continuous for $n>2$ and $p>n/2$.
\end{theorem}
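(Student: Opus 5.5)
The plan is to exploit the defining property of the final topology. A functional on $(\D_\EH,\tau_\mathrm{UND})$ is continuous if and only if its composition with every generating map is continuous. So it suffices to show that for every $(M,g;\Omega)$, closed $M'$, auxiliary $\hat{g}$ and $\Lambda$, the map
\begin{equation*}
S(h,g') := S_\EH\circ\Phi_{(M,g;\Omega,M',\hat{g},\Lambda)}(h,g')
\end{equation*}
is continuous on $\mathcal{U}(M,g;\Omega)\times\dot{\mathcal{C}}(M',\hat{g},\Lambda)$. Here $\dot{\mathcal{C}}$ carries the subspace topology of $W^{2,p}(\operatorname{Sym}M')$. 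As in the proof of Theorem \ref{Disjoint continuity thm}, we write
\begin{equation*}
S(h,g') = \int_\Omega \mathcal{L}[g+h] + \int_{M'}\mathcal{L}[g'],
\end{equation*}
where the second term is understood to be $0$ when $g'=0$. We then treat the two summands separately, as functions of $(h,g')$.

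The first summand depends only on $h$ and is continuous by Theorem \ref{Geometric continuity thm}. For the second summand at a point $g'_*\neq 0$, note that $\mathcal{C}(M',\hat{g},\Lambda)$ is open in $W^{2,p}(\Lz M')$ and $M'$ is compact. So Theorem \ref{Geometric continuity thm}, applied on $M'$ with $K=M'$, gives $\int_{M'}\mathcal{L}[g']\to\int_{M'}\mathcal{L}[g'_*]$ as $g'\to g'_*$. The only genuinely new case is $g'_*=0$. There we must show that $\int_{M'}\mathcal{L}[g']\to 0$ as $g'\to 0$ in $W^{2,p}$ with $g'\in\mathcal{C}(M',\hat{g},\Lambda)$.

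For this case we write $g'=\lambda u$ with $\lambda=\|g'\|_{W^{2,p}(M',\hat{g})}$. Then $\|u\|_{W^{2,p}}=1$ and $\|u^{-1}\|_{L^\infty}<\Lambda$. Lemma \ref{Scaling Lemma} gives
\begin{equation*}
\int_{M'}\mathcal{L}[g'] = \lambda^{\frac{n-2}{2}}\int_{M'}\mathcal{L}[u],
\end{equation*}
so the claim reduces to a uniform bound $\int_{M'}|\mathcal{L}[u]|\le C(M',\hat{g},p,\Lambda)$ over this normalized class. Since $n>2$, the factor $\lambda^{(n-2)/2}$ then tends to zero as $\lambda\to 0$.

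To get the uniform bound, cover $M'$ by finitely many $\hat{g}$-coordinate charts with compact closures, on which $\hat{g}$ is uniformly equivalent to $\delta_{\mu\nu}$. In each chart the hypothesis gives $|u^{\mu\nu}|\le C\Lambda$, and Morrey gives $|u_{\mu\nu}|\le C\|u\|_{W^{2,p}}=C$, hence $|\det u_{\mu\nu}|^{1/2}\le C$. The explicit formula \eqref{scalar curvature} then yields the pointwise bound
\begin{equation*}
|R[u]|\le C\big(\Lambda^2|\partial^2u|+\Lambda^3|\partial u|^2\big).
\end{equation*}
Integrating, we use $\|\partial^2 u\|_{L^1}\le C\|u\|_{W^{2,p}}$ from H\"older on a compact set, and $\|\partial u\|_{L^2}^2\le C\|u\|_{W^{2,p}}^2$ from the embedding $W^{2,p}\hookrightarrow W^{1,2}$, valid since $p>n/2\ge 2n/(n+2)$. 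This gives the desired constant depending only on $M',\hat{g},p,\Lambda$.

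The main obstacle I expect is exactly this uniform bound. It is the step where uniform non-degeneracy is indispensable: without the $\Lambda$-control, $u^{\mu\nu}$ may blow up, which is precisely the mechanism behind Propositions \ref{tau0 discontinuity} and \ref{tau0' discontinuity}. One must also check that the $L^\infty$ bound on the inverse, measured in $|\cdot|_{\hat{g}}$, translates into bounds on coordinate components with constants independent of $u$, which holds by compactness of the finite atlas.

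Joint continuity at $(h_*,0)$ then follows by the triangle inequality. The first summand converges by Theorem \ref{Geometric continuity thm}, and the second is bounded by $C\lambda^{(n-2)/2}\to 0$ uniformly in $h$.
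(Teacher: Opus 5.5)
Your proposal is correct and follows essentially the same route as the paper. You reduce via the final-topology property to the generating maps, normalize $g'=\lambda u$ with $\lambda=\|g'\|_{W^{2,p}(M',\hat{g})}$, bound $\int_{M'}\mathcal{L}[u]$ uniformly using the $\Lambda$-control on $u^{-1}$ together with the Morrey and Sobolev embeddings, and conclude with Lemma \ref{Scaling Lemma} and the $\lambda^{(n-2)/2}$ asymptotics of Theorem \ref{Disjoint continuity thm}; your pointwise estimate $|R[u]|\le C(\Lambda^2|\partial^2u|+\Lambda^3|\partial u|^2)$ and the joint-continuity bookkeeping simply make explicit what the paper's proof calls a ``straightforward calculation''.
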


\begin{proof}
Let $g' \in \mathcal{C}(M',\hat{g},\Lambda)$. Then by straightforward calculation and use of the Sobolev and Morrey embeddings we obtain
\begin{equation}
\bigg| \int_{M'} \mathcal{L}\bigg[  \frac{g'}{\| g' \|_{W^{2,p}(M',\hat{g})}} \bigg]\  \bigg| \leq C(n,p,\hat{g},\Lambda).
\end{equation}
Then an application of Lemma \ref{Scaling Lemma} yields
\begin{equation}
\bigg| \int_{M'} \mathcal{L}[g']\  \bigg| \leq C(n,p,\hat{g},\Lambda) \| g' \|_{W^{2,p}(M',\hat{g})}^{\frac{n-2}{2}},
\end{equation}
which reduces the problem to the scaling asymptotics considered in Theorem \ref{Disjoint continuity thm}. It follows that for $n>2$, the map $S_\EH \circ \Phi_{(M,g;\Omega,M',\hat{g},\lambda,\Lambda)}$ is continuous for any $(M,g;\Omega) \in \D_\EH$. This completes the proof.
\end{proof}

The proof in addition shows that in the case the topology is made coarser by considering variational maps along uniformly non-degenerate cones rather than rays, continuity still rests on the same scaling arguments. In that sense, working with $\tau_1$ in this setting comes without loss of generality.

\section{The effect of quadratic curvature terms} \label{Quadratic terms}

In this section we briefly demonstrate how topological variations interact with quadratic curvature terms in the action, which also correspond to popular models of modified gravity; see Steele \cite{stelle1978} for some foundational physical motivation, also Stavrinos and Saridakis \cite{stavrinos2023} for a modern account on modified gravity theories. For our purposes, such models constitute an ideal example featuring the effect of extra terms in the regime of topological variations, most notably the displacement of critical dimension. The most general action of this form is 
\begin{align}\label{Quadratic action}
\begin{split}
S_\Q[M,g;\Omega] = \int_\Omega (-2\Lambda + R +\alpha R^2 + \beta R_{\mu\nu}R^{\mu\nu} + \gamma R_{\rho\sigma\mu\nu}R^{\rho\sigma\mu\nu})|\det g_{\mu\nu}\,|^{1/2}\,dx
\end{split}
\end{align}
where $\alpha,\beta$ and $\gamma$ are real constants, not all zero, and we have also included a cosmological constant term $\Lambda$, which we could have introduced in previous sections; we will see however that it is dominated by higher order terms due to scaling properties. The effect on critical dimension can be observed even when working with smooth metrics, although it is possible to conduct a thorough analysis in the weak Sobolev framework similar to our previous analysis of Einstein-Hilbert; here we opt for the simpler setup to avoid making the discussion repetitive.

Let us investigate how action \eqref{Quadratic action} changes under the infinitesimal disconnected topological variation \eqref{disjoint infinitesimal variation}. The difference in action is
\begin{align}\label{quadratic variation 1}
\begin{split}
\Delta S_\Q (\epsilon) &= \int_{M'} (-2\Lambda + R[\epsilon g'] + \alpha R^2[\epsilon g']  +\beta R_{\mu\nu}[\epsilon g']R^{\mu\nu}[\epsilon g'] \\
	&\hspace{0.5cm} +\gamma R_{\rho\sigma\mu\nu}[\epsilon g']R^{\rho\sigma\mu\nu}[\epsilon g'])|\det \epsilon g'_{\mu\nu}\,|^{1/2}\,dx'.
\end{split}
\end{align}
The first term remains constant, and we already know how the scalar curvature term scales. Since the curvature tensor ${R^\rho}_{\mu\sigma\nu}$ depends only on the Levi-Civita connection and the later is scale invariant, it follows that both the curvature tensor and the Ricci contraction $R_{\mu\nu} = {R^\rho}_{\mu\rho\nu}$ are scale invariant. So it follows that
\begin{align}
\begin{split}
R_{\mu\nu}[\epsilon g']R^{\mu\nu}[\epsilon g'] &= (\epsilon g')^{\mu\alpha}(\epsilon g')^{\nu\beta} R_{\mu\nu}[\epsilon g']R_{\alpha\beta}[\epsilon g'] \\
	&= \epsilon^{-2} R_{\mu\nu}[g']R^{\mu\nu}[g'],
\end{split}
\end{align}
and similarly
\begin{equation}
R_{\rho\sigma\mu\nu}[\epsilon g']R^{\rho\sigma\mu\nu}[\epsilon g'] = \epsilon^{-2}R_{\rho\sigma\mu\nu}[g']R^{\rho\sigma\mu\nu}[g'].
\end{equation}
Substituting these back to \eqref{quadratic variation 1} we obtain
\begin{align}\label{quadratic variation 2}
\begin{split}
\Delta S_\Q (\epsilon) &= \int_{M'} (-2\Lambda + \epsilon^{-1} R[g'] + \alpha \epsilon^{-2} R^2[g'] +\beta \epsilon^{-2} R_{\mu\nu}[g']R^{\mu\nu}[g'] \\
	&\hspace{0.5cm} +\gamma \epsilon^{-2} R_{\rho\sigma\mu\nu}[g']R^{\rho\sigma\mu\nu}[g'])\epsilon^{n/2}|\det g'_{\mu\nu}\,|^{1/2}\,dx',
\end{split}
\end{align}
while collecting powers of $\epsilon$ yields
\begin{align}\label{quadratic variation 3}
\begin{split}
\Delta S_\Q (\epsilon) &= \epsilon^{\frac{n-4}{2}}\int_{M'} (-2\epsilon^{2}\Lambda + \epsilon R[g'] + \alpha R^2[g'] +\beta R_{\mu\nu}[g']R^{\mu\nu}[g'] \\
	&\hspace{0.5cm} +\gamma R_{\rho\sigma\mu\nu}[g']R^{\rho\sigma\mu\nu}[g'])|\det g'_{\mu\nu}\,|^{1/2}\,dx'.
\end{split}
\end{align}
It follows that the continuity condition $\Delta S_\Q (\epsilon)\rightarrow 0$ as $\epsilon \rightarrow 0^+$ is generically satisfied only if $n>4$, not $n>2$ as in Einstein-Hilbert. So, unless $n>4$, quadratic actions are not even continuous under topological variations. As for the derivative, we see that the difference quotient is
\begin{align}\label{quadratic variation 4}
\begin{split}
\frac{\Delta S_\Q (\epsilon)}{\epsilon} &= \epsilon^{\frac{n-6}{2}}\int_{M'} (-2\epsilon^{2}\Lambda + \epsilon R[g'] + \alpha R^2[g'] + \beta R_{\mu\nu}[g']R^{\mu\nu}[g'] \\
	&\hspace{0.5cm} +\gamma R_{\rho\sigma\mu\nu}[g']R^{\rho\sigma\mu\nu}[g'])|\det g'_{\mu\nu}\,|^{1/2}\,dx'.
\end{split}
\end{align}
If the limit $\epsilon\rightarrow 0^+$ is to be well defined for arbitrary $g'$, there should hold $n\geq 6$. If $n=6$, the derivative is
\begin{align}\label{quadratic variation 5}
\begin{split}
\delta_{(M,g;\Omega)}S_\Q[M',g'] = \int_{M'} (\alpha R^2[g'] +\beta R_{\mu\nu}[g']R^{\mu\nu}[g'] + \gamma R_{\rho\sigma\mu\nu}[g']R^{\rho\sigma\mu\nu}[g'])|\det g'_{\mu\nu}\,|^{1/2}\,dx',
\end{split}
\end{align}
in which case we see that only quadratic terms survive; for the same reason, the cosmological constant term would not survive in the topological functional derivative of the Einstein-Hilbert action when $n=4$. For $n>6$, the derivative is identically zero, and every geometrically stationary point, satisfying the Euler-Lagrange equations of \eqref{Quadratic action}, will also be stationary in the extended framework including disconnected topological variations. The connected version of this is analogous. 

\begin{remark}[The Einstein-Gauss-Bonnet model]
A considerable amount of attention has been given to the Gauss-Bonnet term, which is also the subject of Tsilioukas et al. \cite{tsilioukas2024}, where a heuristic attempt to define a topological derivative is made; for more background, see references therein. The corresponding action is the Einstein-Gauss-Bonnet action
\begin{align}\label{EGB action}
\begin{split}
S_\mathrm{EGB}[M,g;\Omega] = \int_\Omega R|\det g_{\mu\nu}\,|^{1/2}\,dx + \alpha \int_\Omega (R^2 -4 R_{\mu\nu}R^{\mu\nu} +R_{\mu\nu\rho\sigma}R^{\mu\nu\rho\sigma})|\det g_{\mu\nu}\,|^{1/2}\,dx,
\end{split}
\end{align}
where $\alpha$ is a non-zero constant. In the regime of topological variations, this is interesting due to the Chern-Gauss-Bonnet theorem \cite{chern1944}, which states that in dimension $n=4$ there holds
\begin{align}
\begin{split}
\int_\Omega (R^2 -4 R_{\mu\nu}R^{\mu\nu} +R_{\mu\nu\rho\sigma}R^{\mu\nu\rho\sigma})|\det g_{\mu\nu}\,|^{1/2}\,dx = \frac{1}{32\pi^2}\chi(\Omega) +\int_{\partial \Omega} Q,
\end{split}
\end{align}
where $\chi(\Omega)$ is the Euler characteristic of $\bar{\Omega}$ as a compact manifold with boundary and $Q$ is a $3$-form on the boundary of $\Omega$. The change of \eqref{EGB action} under the infinitesimal disconnected topological variation \eqref{disjoint infinitesimal variation} then amounts to
\begin{equation}\label{EGB change}
\Delta S_\mathrm{EGB}(\epsilon) = \epsilon \int_{M'} R[g']|\det g'_{\mu\nu}\,|^{1/2}\,dx' + \frac{\alpha}{32\pi^2}\chi(M').
\end{equation}
It follows that $\Delta S_\mathrm{EGB}(\epsilon) \rightarrow \alpha\chi(M')/32\pi^2$ as $\epsilon\rightarrow 0^+$, which renders the action discontinuous under topological variations that have $\chi(M') \neq 0$. This already shows that a derivative in this case is ill-defined for topological variations. But even in the case where $\chi(M')=0$, the problem of the action not admitting critical points in dimension $n=4$ still persists, as with plain Einstein-Hilbert.
\end{remark}

\end{document}